\theoremstyle{plain}
\newtheorem{thm}{Theorem}[section]
\newtheorem{prop}[thm]{Proposition}
\newtheorem{lem}[thm]{Lemma}
\newtheorem{cor}[thm]{Corollary}
\theoremstyle{definition}
\newtheorem{dfn}[thm]{Definition}
\newtheorem{rem}[thm]{Remark}
\newtheorem{exam}[thm]{Example}
\numberwithin{equation}{section}
\renewenvironment{proof}[1][\proofname]{\par
  \pushQED{\qed}%
  \normalfont \topsep6\p@\@plus6\p@\relax
  \trivlist
  \item[\hskip\labelsep
	\bfseries
    #1\@addpunct{.}]\ignorespaces
}{%
  \popQED\endtrivlist\@endpefalse
}
\DeclareMathOperator{\ev}{ev}
\DeclareMathOperator{\id}{id}
\DeclareMathOperator{\tr}{tr}
\DeclareMathOperator{\str}{str}
\DeclareMathOperator{\mult}{mult}
\newcommand{\frg}{\mathfrak{g}}
\newcommand{\frz}{\mathfrak{z}}
\newcommand{\rect}{\mathcal{W}^{\kappa}(\mathfrak{g},f)}
\newcommand{\ve}{\varepsilon}
\newcommand{\bbZ}{\mathbb{Z}}
\newcommand{\bbC}{\mathbb{C}}
\newcommand{\frakS}{\mathfrak{S}}
\newcommand{\affY}{Y(\affsl)}
\newcommand{\compaffY}{\affY_{\mathrm{comp}}}
\newcommand{\gl}{\mathfrak{gl}}
\newcommand{\fraksl}{\mathfrak{sl}}
\newcommand{\affsl}{\hat{\mathfrak{sl}}_n}
\newcommand{\pari}[1]{\overline{#1}}
\newcommand{\cpari}[2]{\chi({#1},{#2})}
\title{Coproduct for affine Yangians and parabolic induction for rectangular $W$-algebras}
\author{Ryosuke Kodera and Mamoru Ueda}
\date{}
\let\@old@@maketitle=\@maketitle
\def\@maketitle{%
\footnotetext{%
\hspace*{-1em}\hspace*{-\footnotesep}%
R.K. Department of Mathematics and Informatics,
Graduate School of Science, 
Chiba University\\
E-mail address: kodera@math.s.chiba-u.ac.jp\\
M.U. Research Institute for Mathematical Sciences,
Kyoto University\\
E-mail address: udmamoru@kurims.u-kyoto.ac.jp
}
\@old@@maketitle
}
\begin{document}
\maketitle

\begin{abstract}
We construct algebra homomorphisms from affine Yangians to the current algebras of rectangular $W$-algebras both in type A.
The construction is given via the coproduct and the evaluation map for the affine Yangians.
As a consequence, we show that parabolic inductions for representations of the rectangular $W$-algebras can be regarded as tensor product representations of the affine Yangians under the homomorphisms.
The same method is applicable also to the super setting.
\end{abstract}

\section{Introduction}

$W$-algebras form an important class of vertex algebras.
They have been widely studied with connections to various areas of mathematics and mathematical physics such as two-dimensional conformal field theories, integrable systems, and four-dimensional gauge theories.
We refer to Arakawa's ICM talk \cite{MR3966808} and references therein for recent developments.
The goal of this paper is to relate $W$-algebras for rectangular nilpotent elements in type A to certain quantum groups, called affine Yangians.  

The $W$-algebra $\rect$ is defined by the quantized Drinfeld-Sokolov reduction \cite{MR1071340, MR2013802} 
from the following data:
a reductive Lie algebra $\frg$ over $\bbC$,  
an invariant symmetric bilinear form $\kappa$ on $\frg$,
and a nilpotent element $f$ in $\frg$.
Here we omit to mention a choice of a grading of $\frg$.
It admits the Miura map, an injective homomorphism from $\mathcal{W}^{\kappa}(\frg,f)$ to the universal affine vertex algebra $V^{\tilde\kappa}(\frg_{0})$ where $\frg_{0}$ denotes a Levi subalgebra of $\frg$ determined by a grading of $\frg$, and $\tilde{\kappa}$ is a bilinear form on $\frg_{0}$ induced from $\kappa$ with an appropriate shift.
Thus we can regard $\rect$ as a vertex subalgebra of  $V^{\tilde\kappa}(\frg_{0})$.

In this paper, we consider the case $\frg = \gl_{N}$ for $N=ln$ with $l \geq 1$ and $n \geq 3$, and $f$ is a nilpotent element whose Jordan form corresponds to the partition $(l^{n})$.
We call them rectangular $W$-algebras because of the shape of the Young diagram corresponding to the partition $(l^{n})$.
Recently the rectangular $W$-algebras have been intensively studied; see \cite{MR3925243, MR4032024, MR4061286, MR4088277} for instance. 

For a suitable choice of $\kappa$ (see Section~\ref{section:Rectangular_W-algebras}), the universal affine vertex algebra $V^{\tilde\kappa}(\frg_{0})$ is isomorphic to the tensor product of $l$ copies of the universal affine vertex algebra of $\gl_{n}$.
Arakawa-Molev \cite{MR3598875} constructed elements $W_{i,j}^{(r)}$ ($r=1,\ldots,l$ and $i,j=1,\dots,n$) in $V^{\tilde\kappa}(\frg_{0}) \cong V^{\tilde\kappa}(\gl_{n})^{\otimes l}$ and proved that they generate the rectangular $W$-algebra $\rect$.

Similar, and more refined story has been known for the corresponding finite $W$-algebras.
The rectangular finite $W$-algebra for $\frg = \gl_{N}$ and $f = (l^{n})$ is regarded as a subalgebra of $U(\frg_{0}) \cong U(\gl_{n})^{\otimes l}$.
Ragoucy-Sorba \cite{MR1700166} first noticed a relation between the rectangular finite $W$-algebras and Yangians of finite type both in type A.
Later Brundan-Kleshchev \cite{MR2199632} extended Ragoucy-Sorba's result; they constructed surjective algebra homomorphisms from shifted Yangians to finite $W$-algebras for arbitrary nilpotent elements in $\gl_{N}$ and determined their kernels.
In \cite[Section~12]{MR2199632}, it was observed that in the case of rectangular type, the homomorphism is related to the coproduct and the evaluation map for the Yangian $Y(\gl_{n})$.

Our main theorem is an affine analog of this observation, which asserts that the composition of the coproduct and the evaluation map for the affine Yangian $\affY$ gives an algebra homomorphism to the current algebra of the rectangular $W$-algebra $\rect$.
It answers affirmatively to an question raised by Genra in \cite[Introduction]{MR4091897}.

Let us state the main result and its consequence.
Let $k$ be the level of the rectangular $W$-algebra $\rect = \mathcal{W}^{\kappa}(\mathfrak{gl}_{N}, (l^n))$ where $N=ln$ with $l \geq 1$ and $n \geq 3$.
We associate with a vertex algebra $V$, its current algebra $\mathfrak{U}(V)$.
Let $\hbar, \ve$ be the parameters of the affine Yangian $\affY$.
We assume that the condition $k+N=-\ve/\hbar$ is satisfied.
Then we will define a map $\Phi_{l}$ in Definition~\ref{dfn:map}, which is an algebra homomorphism from a completed affine Yangian $\affY_{\mathrm{comp}}$ to a completed tensor product $U(\hat{\gl}_{n}^{\alpha})_{{\mathrm{comp}}}^{\otimes l}$ of the universal enveloping algebras of the affine Lie algebras $\hat{\gl}_{n}^{\alpha}$ at level $\alpha = k+N-n$.
Let $\Delta$ denote the coproduct of $\affY$.
Following \cite[Section~7.2]{MR4091897}, we will introduce in Section~\ref{section:Coproduct_and_parabolic_induction} an injective algebra homomorphism
\[
	\tilde{\Delta}_{l_1,l_2} \colon \mathfrak{U}(\rect) \to \mathfrak{U}(\mathcal{W}^{\kappa_1}(\mathfrak{gl}_{N_1}, f_1)) \otimes_{\mathrm{comp}} \mathfrak{U}(\mathcal{W}^{\kappa_2}(\mathfrak{gl}_{N_2}, f_2))
\]
with $l=l_1+l_2$ and $N_i=l n_i$, $f_i=(l_i^n)$.

\begin{thm}[Theorem~\ref{thm:main}, Corollary~\ref{cor:parabolic_induction}]\label{thm:introduction_main}
Assume $k+N=-\ve/\hbar$.
Then the map $\Phi_{l}$ gives an algebra homomorphism
\[
	\Phi_{l} \colon \affY_{\mathrm{comp}} \to \mathfrak{U}(\rect)
\]
which makes the following diagram commutative:
\[
	\xymatrix{
		\affY_{\mathrm{comp}} \ar[d]_{\Delta} \ar[r]^{\Phi_{l}} \ & \ \mathfrak{U}(\rect) \ar[d]^{\tilde{\Delta}_{l_1,l_2}} \\
		\affY^{\otimes 2}_{\mathrm{comp}} \ar[r]^-{\Phi_{l_1} \otimes \Phi_{l_2}} \ \ & \ \ \mathfrak{U}(\mathcal{W}^{\kappa_1}(\mathfrak{gl}_{N_1}, f_1)) \otimes_{\mathrm{comp}} \mathfrak{U}(\mathcal{W}^{\kappa_2}(\mathfrak{gl}_{N_2}, f_2)).
	}	
\]
Moreover, $\Phi_{l}$ is surjective if $k+N-n \neq 0$.
\end{thm}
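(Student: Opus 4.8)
The plan is to reduce everything to a computation on the generators of $\affY$, using three facts that are in place before the theorem. First, $\Phi_{l}$ is already an algebra homomorphism $\compaffY \to U(\hat{\gl}_{n}^{\alpha})_{\mathrm{comp}}^{\otimes l}$, with $\alpha = k+N-n$, by Definition~\ref{dfn:map}. Second, $\compaffY$ is topologically generated by the Drinfeld (current) generators of $\affY$, which we denote $x_{i,r}^{\pm}$ and $h_{i,r}$ ($i \in \bbZ/n\bbZ$, $r\ge 0$), and in fact by those with $r \le 1$. Third, $\rect$, realized inside $V^{\tilde{\kappa}}(\gl_{n})^{\otimes l}$ via the Miura map, is generated by the Arakawa--Molev elements $W_{i,j}^{(s)}$ ($s=1,\dots,l$, $1\le i,j\le n$). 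Thus the inclusion $\Phi_{l}(\compaffY) \subseteq \mathfrak{U}(\rect)$ amounts to showing that the images of the Yangian generators lie in $\mathfrak{U}(\rect)$, and surjectivity amounts to recovering every mode of every $W_{i,j}^{(s)}$ from the image.

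For the inclusion I would compute $\Phi_{l}(x_{i,r}^{\pm})$ and $\Phi_{l}(h_{i,r})$ explicitly: push the coproduct formulas of $\affY$ through the $l$-fold tensor product, apply the evaluation map in each tensor factor, and identify the resulting elements of $U(\hat{\gl}_{n}^{\alpha})_{\mathrm{comp}}^{\otimes l}$ with explicit polynomial expressions in finitely many modes of the $W_{i,j}^{(s)}$, which then visibly belong to $\mathfrak{U}(\rect)$. The degree-zero generators are easy: they generate a copy of $U(\affsl)$ inside $\affY$, and $\Phi_{l}$ carries this copy essentially to the diagonal one inside $U(\hat{\gl}_{n}^{\alpha})_{\mathrm{comp}}^{\otimes l}$, which sits in the weight-one subspace of $\rect$, a copy of $\gl_{n}$ spanned by the $W_{i,j}^{(1)}$. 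The real work is the degree-one generators $h_{i,1}, x_{i,1}^{\pm}$, whose images bring in the higher $W_{i,j}^{(s)}$; carrying the rather involved coproduct of $\affY$ through all $l$ tensor factors while controlling the spectral parameters, and recognizing the output in the Arakawa--Molev presentation, is the main obstacle of the proof. It is precisely here that the hypothesis $k+N=-\ve/\hbar$ is used: it is the relation between the parameters $\hbar,\ve$ of $\affY$ and the level $\alpha$ of $\hat{\gl}_{n}^{\alpha}$ under which the evaluation map exists. (Alternatively one could characterize $\mathfrak{U}(\rect)$ inside $\bigotimes_{\mathrm{comp}}\mathfrak{U}(V^{\tilde{\kappa}}(\gl_{n}))$ as the joint kernel of the screening operators and check that the images of the Yangian generators commute with them; I expect the direct matching to be shorter.)

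The commutative square then reflects the coassociativity of $\Delta$ together with the compatibility of $\tilde{\Delta}_{l_1,l_2}$ with the tensor factorization underlying the Miura map. Concretely, by coassociativity $\Delta^{(l)} = (\Delta^{(l_1)}\otimes\Delta^{(l_2)})\circ\Delta$, and with the spectral shifts in $\Phi_{l}$ chosen additively, as they are in Definition~\ref{dfn:map}, composing with the appropriate shifted evaluation maps gives $\iota\circ\Phi_{l} = (\Phi_{l_1}\otimes\Phi_{l_2})\circ\Delta$ as maps into $U(\hat{\gl}_{n}^{\alpha})_{\mathrm{comp}}^{\otimes l_1}\otimes_{\mathrm{comp}}U(\hat{\gl}_{n}^{\alpha})_{\mathrm{comp}}^{\otimes l_2}$, where $\iota$ is the canonical factorization isomorphism. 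Since both composites around the square are algebra homomorphisms out of $\compaffY$ and the latter is topologically generated by the $x_{i,r}^{\pm}, h_{i,r}$, it suffices to match them on these generators, which is a direct check once the formulas for $\Delta$, for $\tilde{\Delta}_{l_1,l_2}$ on the $W_{i,j}^{(s)}$ (part of its construction in Section~\ref{section:Coproduct_and_parabolic_induction}), and for $\Phi_{\bullet}$ are in hand.

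Finally, for surjectivity when $\alpha = k+N-n \neq 0$, I would invert the formulas from the second paragraph. For $r = 0, 1, \dots, l-1$ the image of the degree-$r$ Yangian elements brings in, on top of lower-level terms, the next-level Arakawa--Molev generators $W_{i,j}^{(r+1)}$ linearly, with a coefficient equal to a nonzero scalar multiple of $\alpha$; since the lower-level terms have strictly smaller filtration degree and hence lie in $\Phi_{l}(\compaffY)$ by induction, dividing by $\alpha$ and inducting on $r$ recovers all $W_{i,j}^{(s)}$ and yields $\Phi_{l}(\compaffY) = \mathfrak{U}(\rect)$. When $\alpha = 0$ this inversion fails, consistent with the stated hypothesis. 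Putting the three parts together proves Theorem~\ref{thm:main}, and interpreting the square as the compatibility of $\Phi_{l}$ with parabolic induction gives Corollary~\ref{cor:parabolic_induction}.
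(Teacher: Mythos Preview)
Your approach to the inclusion and to the commutative square is essentially the paper's: compute $\Phi_{l}$ on the small generating set $\{X_{i,0}^{\pm},X_{i,1}^{+}\}$ of Proposition~\ref{prop:degree_one_generator}, match the answer with expressions in $W_{i,j}^{(1)}$ and $W_{i,j}^{(2)}$ (Proposition~\ref{prop:image}), and deduce the square from coassociativity of $\Delta$ together with the shift identity $(\eta_{-l_2\alpha}^{\otimes l_1}\otimes\id^{\otimes l_2})\circ\eta^{(l)}=\eta^{(l_1)}\otimes\eta^{(l_2)}$. That part is fine.

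The surjectivity argument, however, has a real gap. Your proposed induction on $r$ presumes that the degree-$r$ Yangian generators produce $W_{i,j}^{(r+1)}$ ``on top of lower-level terms, with a coefficient equal to a nonzero scalar multiple of $\alpha$.'' Neither claim holds. The explicit formulas (Proposition~\ref{prop:image}) show that $\Phi_{l}(X_{i,1}^{\pm})$ involves only $W^{(1)}$ and $W^{(2)}$, and the coefficient of $W^{(2)}$ is $-\hbar$, not a multiple of $\alpha$; pushing to higher $r$ in the Yangian filtration still produces only polynomial expressions in modes of $W^{(1)}$ and $W^{(2)}$, never $W^{(3)},\dots,W^{(l)}$ directly. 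So there is no ladder $r\mapsto W^{(r+1)}$ to climb, and your inversion-by-induction cannot start.

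The paper's route is different: one first proves, purely inside the $W$-algebra, that the modes of $W^{(1)}$ and $W^{(2)}$ already topologically generate $\mathfrak{U}(\rect)$ when $\alpha\neq 0$ (Proposition~\ref{prop:generators}, refined in Proposition~\ref{prop:generators2}). Then it suffices to show that the image of $\Phi_{l}$ contains all $W_{i,j}^{(1)}(m)$ and enough $W_{i,j}^{(2)}(m)$. This is where $\alpha\neq 0$ is actually used---not as the leading coefficient of $W^{(r+1)}$, but (i) to extract the diagonal modes $W_{i,i}^{(1)}(m)$ from suitable commutators (equations (\ref{eq:Heisenberg1}) and (\ref{eq:Heisenberg2})), and (ii) to pass from the off-diagonal $W_{i,j}^{(2)}$ to the diagonal $W_{i,i}^{(2)}(m)$ via the OPE computation in the last step of the proof of Proposition~\ref{prop:generators2}, where a factor $(m'-m)\alpha$ must be inverted. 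Without this internal generation result for the $W$-algebra your argument does not close; you should invoke (or reprove) it rather than attempt the filtration inversion.
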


Given representations $M_i$ of $\mathcal{W}^{\kappa_i}(\mathfrak{gl}_{N_i}, f_i)$ for $i=1,2$, we regard $M_1 \otimes M_2$ as a representation of $\mathfrak{U}(\rect)$ via $\tilde{\Delta}_{l_1,l_2}$.
This is called parabolic induction.
By Theorem~\ref{thm:introduction_main}, each $M_1$, $M_2$, and $M_1 \otimes M_2$ are regarded as representations of the affine Yangian $\affY$ via $\Phi_{l_1}$, $\Phi_{l_2}$, and $\Phi_{l}$ respectively.
The commutative diagram in Theorem~\ref{thm:introduction_main} means that $M_1 \otimes M_2$ is the tensor product representation via the coproduct $\Delta$.

We obtain a similar result for the super setting in Theorem~\ref{thm:introduction_main_super}.
The second named author constructed in \cite{Affine_super_Yangians_and_rectangular_W-superalgebras} an algebra homomorphism $\Phi$ from $\affY_{\mathrm{comp}}$ to $\mathfrak{U}(\rect)$ directly.
Namely, he assigned elements of $\mathfrak{U}(\rect)$ to generators of $\affY$ and checked that it respects the defining relations of $\affY$.
His construction is applicable also to the super setting.
The map $\Phi_{l}$ coincides with $\Phi$ up to a twist in both the non-super and the super case (see Remark~\ref{rem:difference} (i) for the twist in the non-super case).
Hence Theorem~\ref{thm:introduction_main} and Theorem~\ref{thm:introduction_main_super} give an alternative proof of the main result of \cite{Affine_super_Yangians_and_rectangular_W-superalgebras}.

Our work is motivated by the Alday-Gaiotto-Tachikawa (AGT) correspondence for parabolic sheaves on $\mathbb{P}^{1} \times \mathbb{P}^{1}$.
There are various versions of AGT correspondence and we only refer to the original paper \cite{MR2586871} and \cite[Introduction, 1.5 and 1.11 (4)]{MR3592485} here.
It has been shown that the affine Yangian $\affY$ acts on the localized equivariant cohomology of affine Laumon spaces (framed moduli spaces of parabolic sheaves) by Feigin-Finkelberg-Negut-Rybnikov \cite{MR2827177} and Finkelberg-Tsymbaliuk \cite{MR3971731}.
We conjecture that our map $\Phi_{l}$ induces an action of $\rect$ on the equivariant cohomology of affine Laumon spaces and that the action is one predicted by the AGT correspondence.
This conjecture would give an affine analog of results of Braverman-Feigin-Finkelberg-Rybnikov \cite{MR2851149} and Nakajima \cite{MR3024827} in the case of rectangular type.
We note that Negut studies the $K$-theory version in \cite{Toward_AGT_for_parabolic_sheaves, Deformed_W_algebras_in_type_A_for_rectangular_nilpotent} by a different approach. 

We finish this section with comments on the case for $n=1,2$.
Works of Maulik-Okounkov \cite{MR3951025} and Schiffmann-Vasserot \cite{MR3150250} related the affine Yangian $Y(\hat{\mathfrak{gl}}_1)$ to the principal $W$-algebras of type A, and established the AGT correspondence in this case.
We are missing the $n=2$ case at the moment since neither of coproduct nor evaluation map for $Y(\hat{\mathfrak{sl}}_2)$ has not been defined.

This paper is organized as follows.
In Section~\ref{section:Rectangular_W-algebras}--\ref{section:Parabolic_induction}, we prepare basics on the rectangular $W$-algebras.
In Section~\ref{section:Affine_Yangians}--\ref{section:Evaluation_map}, we recall the coproduct and the evaluation map for the affine Yangians.
We construct the algebra homomorphisms from the affine Yangians to the rectangular $W$-algebras in Section~\ref{section:Affine_Yangians_and_rectangular_W-algebras} and relate the parabolic induction for the rectangular $W$-algebras to the coproduct for the affine Yangians in Section~\ref{section:Coproduct_and_parabolic_induction}. 
We discuss some commuting elements of the rectangular $W$-algebras in Section~\ref{section:Commuting_elements}.
In Section~\ref{section:Super_case}, we give a brief explanation for the super case. 
Appendix A, B, and C collect some computations.

\subsection*{Acknowledgments}
The authors are grateful to Tomoyuki Arakawa, Boris Feigin, Ryo Fujita, Naoki Genra, 
Toshiro Kuwabara, Andrew Linshaw, Hiraku Nakajima, Shigenori Nakatsuka, Andrei Negut, Masatoshi Noumi, Shoma Sugimoto, Husileng Xiao, Yasuhiko Yamada, and Shintarou Yanagida for valuable discussions and suggestions.

Some part of results of this paper were presented by the first named author in Workshop on 3d Mirror Symmetry and AGT Conjecture held in October 21--25, 2019 at Institute for Advanced Study in Mathematics, Zhejiang University, Hangzhou.
He thanks to their hospitality.

The first named author was supported by JSPS KAKENHI Grant Number 18K13390, 21K03155.
His work was also supported in part by JSPS Bilateral Joint Projects (JSPS-RFBR collaboration) ``Elliptic algebras, vertex operators and link invariants'' from MEXT, Japan.
The second named author was supported by Grant-in-Aid for JSPS Fellows 20J12072.

\section*{Notations for parameters}

Throughout the paper, we fix nonzero positive integers $l$ and $n$, then put $N=ln$.
We assume $n \geq 3$ from Section~\ref{section:Affine_Yangians} to Section~\ref{section:Commuting_elements}, in particular in the main theorem.

We fix a complex number $k$, which stands for the level of the $W$-algebra.
We also use an alternative parameter $\alpha$ defined by $\alpha= k+(l-1)n= k+N-n$ except for Section~\ref{section:Super_case}.

\section{Rectangular $W$-algebras}\label{section:Rectangular_W-algebras}

The $W$-algebra $\mathcal{W}^{\kappa}(\frg,f)$ is defined by the quantized Drinfeld-Sokolov reduction from the data consisting of a reductive Lie algebra $\frg$ over $\bbC$ equipped with a $\frg$-invariant symmetric bilinear form $\kappa$ 
and a nilpotent element $f$ in $\frg$.
In this paper, we do not use its definition but a characterization by the Miura map, which will be recalled in Section~\ref{section:Miura_map}.
See \cite{MR2013802} for the definition.

Let $\frg = \gl_{N}$ be the complex general linear Lie algebra consisting of $N \times N$ matrices.
We denote by $E_{i,j}$ the matrix unit whose $(i,j)$-entry is $1$.
Put $I_{N} = \sum_{i=1}^{N} E_{i,i}$ and $\frz_{N} = \bbC I_{N}$.
Then we have a decomposition
\[
	\frg = \fraksl_{N} \oplus \frz_{N}.
\]
Fix $k \in \bbC$.
We define an invariant symmetric bilinear form $\kappa$ on $\frg$ by
\begin{equation}
	\kappa(X,Y) = \begin{cases}
		k \tr(XY) & \text{if } X,Y \in \fraksl_{N},\\
		(k+N) \tr(XY) & \text{if } X,Y \in \frz_{N},\\
		0 & \text{if } X \in \fraksl_{N} \text{ and } Y \in \frz_{N}.
	\end{cases} \label{eq:kappa}
\end{equation}
The explicit value of $\kappa$ is
\[
	\kappa(E_{i,j},E_{p,q}) = \delta_{iq}\delta_{jp}k + \delta_{ij}\delta_{pq}.
\]

We assume $N= l n$ with nonzero positive integers $l$ and $n$.
As vector spaces, we have an isomorphism
\begin{equation}
	\gl_{l} \otimes \gl_{n} \cong \frg=\gl_{N}, \qquad E_{s,t} \otimes E_{i,j} \mapsto E_{(s-1)n+i,(t-1)n+j}. \label{eq:isom_gl} 
\end{equation}
We define a $\bbZ$-grading of $\gl_{l}$ by $\deg E_{s,t} = t-s$ and denote by $(\gl_{l})_{j}$ its degree $j$ component.
It induces a $\bbZ$-grading of $\frg$ by
\[
	\frg_{j} = (\gl_{l})_{j} \otimes \gl_{n}
\]
using the identification (\ref{eq:isom_gl}).
In particular, we have $\frg_{0} \cong \gl_{n}^{\oplus l}$.
We set
\begin{equation}
	E_{i,j}^{[s]} = E_{(s-1)n+i,(s-1)n+j} \label{eq:diagonal}
\end{equation}
for $s=1,\ldots,l$ and $i,j=1,\ldots,n$, then $\{ E_{i,j}^{[s]} \mid i,j=1,\ldots,n \}$ forms a $\bbC$-basis of the $s$-th component of $\frg_{0} \cong \gl_{n}^{\oplus l}$. 
Set
\begin{equation}
	\begin{split}
		f = \left( \sum_{s=1}^{l-1} E_{s+1,s} \right) \otimes I_{n} = \sum_{s=1}^{l-1} \sum_{i=1}^{n} E_{sn+i,(s-1)n+i} = \left[ \begin{array}{ccccc}
			O &  & & & O\\ 
			I_{n} & O & & & \\ 
			 & I_{n} & \dots & & \\ 
			 &  & & O & \\ 
			O &  & &  I_{n} & O
		\end{array} \right]. \label{eq:nilpotent}
	\end{split}
\end{equation}
The Jordan form of the nilpotent element $f$ corresponds to the partition $(l^{n})$ of rectangular type.

From the data ($\frg, \kappa, f$) above, we can define the rectangular $W$-algebra $\rect$ as a vertex algebra.
It is isomorphic to the tensor product
\[
	\mathcal{W}^{k}(\fraksl_{N},f) \otimes V^{k+N}(\frz_{N}),
\]
where $\mathcal{W}^{k}(\fraksl_{N},f)$ is the $W$-algebra associated with $\fraksl_{N}$ and $f$ at level $k$, 
and $V^{k+N}(\frz_{N})$ is the Heisenberg vertex algebra associated with $\frz_{N}$ at level $k+N$.
 
\section{Affine Lie algebras}\label{section:Affine_Lie_algebras}

Let $\kappa_{\frg}^{\circ}$ and $\kappa_{\frg_{0}}^{\circ}$ be the Killing forms on $\frg$ and $\frg_{0}$ respectively.
We define an invariant symmetric bilinear form $\tilde{\kappa}$ on $\frg_{0}$ by
\[
	\tilde{\kappa} = \kappa |_{\frg_{0}} + \dfrac{1}{2} ( \kappa_{\frg}^{\circ} |_{\frg_{0}} - \kappa_{\frg_{0}}^{\circ} ).
\]
This bilinear form is often denoted by $\tau_{\kappa}$, but we do not use the notation in this paper.
The explicit value of $\tilde{\kappa}$ is
\begin{equation}
	\tilde{\kappa}(E_{i,j}^{[s]},E_{p,q}^{[t]}) = \delta_{st} \Big( \delta_{iq}\delta_{jp}(k+(l-1)n) + \delta_{ij}\delta_{pq} \Big). \label{eq:tildekappa}
\end{equation}

For an arbitrary $\alpha \in \bbC$, we can define an invariant symmetric bilinear form $(\, , \,)$ on $\gl_{n}$ by replacing $k$ and $N$ with $\alpha$ and $n$ in the definition of $\kappa$ in (\ref{eq:kappa});
\[
	(X,Y) = \begin{cases}
		\alpha \tr(XY) & \text{if } X,Y \in \fraksl_{n},\\
		(\alpha+n) \tr(XY) & \text{if } X,Y \in \frz_{n},\\
		0 & \text{if } X \in \fraksl_{n} \text{ and } Y \in \frz_{n}.
	\end{cases}
\]
Its explicit value is
\begin{equation}
	(E_{i,j},E_{p,q}) = \delta_{iq}\delta_{jp}\alpha + \delta_{ij}\delta_{pq}. \label{eq:gln_bilinear_form}
\end{equation}
Hence, if we put
\[
	\alpha = k+(l-1)n
\]
and compare (\ref{eq:tildekappa}) and (\ref{eq:gln_bilinear_form}), then the restriction of $\tilde{\kappa}$ to each component of $\frg_{0} \cong \gl_{n}^{\oplus l}$ is identified with $(\, , \,)$.
We use the same symbol $\tilde{\kappa}$ for this bilinear form on $\gl_{n}$ by abuse of notation.

We define the affine Lie algebra $\hat{\frg}_{0}$ associated with $\tilde{\kappa}$ by
\[
	\hat{\frg}_{0} = \frg_{0}[t,t^{-1}] \oplus \bbC \mathbf{1},
\]
\[
	[X t^{m}, Y t^{m'}] = [X,Y] t^{m+m'} + m \delta_{m+m',0} \tilde{\kappa}(X,Y) \mathbf{1}, \quad \text{$\mathbf{1}$ is central}.
\]
Similarly we define the affine Lie algebra $\hat{\gl}_{n}$ associated with $\tilde{\kappa}$ by
\[
	\hat{\gl}_{n} = \gl_{n}[t,t^{-1}] \oplus \bbC \mathbf{1}',
\]
\[
	[X t^{m}, Y t^{m'}] = [X,Y] t^{m+m'} + m \delta_{m+m',0} \tilde{\kappa}(X,Y) \mathbf{1}', \quad \text{$\mathbf{1}'$ is central}.
\]
We have an isomorphism
\[
	\hat{\frg}_{0} / (\mathbf{1}-1) \cong \left( \hat{\gl}_{n} / (\mathbf{1}'-1) \right)^{\oplus l}.
\]
We put
\[
	\hat{\gl}_{n}^{\alpha} = \hat{\gl}_{n} / (\mathbf{1}'-1).
\] 
It is the direct sum of the affine Lie algebra $\hat{\fraksl}_{n}$ at level $\alpha$ and the Heisenberg Lie algebra $\hat{\frz}_{n}$ at level $\alpha+n$.

We consider the universal affine vertex algebras
\[
	V^{\tilde{\kappa}}(\frg_{0}) = U(\hat{\frg}_{0}) \otimes_{U(\frg_{0}[t] \oplus \bbC \mathbf{1})} \bbC, \quad V^{\tilde{\kappa}}(\gl_{n}) = U(\hat{\gl}_{n}) \otimes_{U(\gl_{n}[t] \oplus \bbC \mathbf{1}')} \bbC,
\]
where $\bbC$ denotes both the one-dimensional representations defined by
\[
	\frg_{0}[t], \gl_{n}[t] \mapsto 0, \quad \mathbf{1}, \mathbf{1}' \mapsto 1.
\]
We have an isomorphism
\[
	V^{\tilde{\kappa}}(\frg_{0}) \cong V^{\tilde{\kappa}}(\gl_{n})^{\otimes l}
\]
of vertex algebras.
We also have an isomorphism
\[
	V^{\tilde{\kappa}}(\gl_{n}) \cong V^{\alpha}(\fraksl_{n}) \otimes V^{\alpha+n}(\frz_{n}),
\]
where $V^{\alpha}(\fraksl_{n})$ is the universal affine vertex algebra associated with $\hat{\fraksl}_{n}$ at level $\alpha$, and $V^{\alpha+n}(\frz_{n})$ is the Heisenberg vertex algebra associated with $\hat{\frz}_{n}$ at level $\alpha+n$.

We denote the element $X t^{m}$ by $X(m)$.
As vector spaces, we have
\[
	V^{\tilde{\kappa}}(\frg_{0}) \cong U(\frg_{0}[t^{-1}]t^{-1}) \ket{0}, 
\]
where $\ket{0} = 1 \otimes 1$ denotes the vacuum vector.
For $X \in \frg_{0}$ and $m \in \bbZ_{\geq 1}$, we use the symbol $X[-m]$ for the element $X(-m)\ket{0}$ of $V^{\tilde{\kappa}}(\frg_{0})$.

We associate with a vertex algebra $V$ the current algebra $\mathfrak{U}(V)$ (sometimes called the universal enveloping algebra).
Roughly speaking, it is a completed associative algebra topologically generated by elements $vt^{m}$ ($v \in V$ and $m \in \bbZ$) corresponding to the Fourier coefficients $v_{(m)}$ of the generating field $v(z)=\sum_{m \in \bbZ} v_{(m)}z^{-m-1}$.
See \cite[Chapter~4]{MR2082709} or \cite[Section~3.11]{MR2318558} for details (it is denoted by $\widetilde{U}(V)$ in \cite{MR2082709}).

Consider the universal affine vertex algebra $V^{\tilde{\kappa}}(\gl_{n})$.
Its current algebra $\mathfrak{U}(V^{\tilde{\kappa}}(\gl_{n}))$ is identified with a completion $U(\hat{\gl}_{n}^{\alpha})_{\mathrm{comp}}$ of $U(\hat{\gl}_{n}^{\alpha})$ given as follows.
In general, let $A = \bigoplus_{d \in \bbZ} A_d$ be a $\bbZ$-graded algebra.
We call
\[
	A_{\mathrm{comp}} = \bigoplus_{d \in \bbZ} \lim_{\substack{\leftarrow\\ m}} \left( A_d / \sum_{r > m} A_{d-r} A_{r} \right)
\]
the standard degree-wise completion of $A$ according to \cite[A.2]{MR2318558}.
We define a grading of $U(\hat{\gl}_{n}^{\alpha})$ by $\deg X(m) = m$.
Then we denote by $U(\hat{\gl}_{n}^{\alpha})_{\mathrm{comp}}$ the standard degree-wise completion of $U(\hat{\gl}_{n}^{\alpha})$.
Moreover $U(\hat{\gl}_{n}^{\alpha})_{\mathrm{comp}}^{\otimes l}$ denotes the degree-wise completed tensor product of $U(\hat{\gl}_{n}^{\alpha})$ defined in the same way.
Then we have
\begin{equation}
	\mathfrak{U}(V^{\tilde{\kappa}}(\frg_{0})) \cong U(\hat{\gl}_{n}^{\alpha})_{\mathrm{comp}}^{\otimes l}. \label{eq:tensor}
\end{equation}
Let the symbol $X^{[s]}$ for $s=1,\ldots,l$ denote the element $1^{\otimes (s-1)} \otimes X \otimes 1^{\otimes (l-s)}$.
This notation is compatible with one in (\ref{eq:diagonal}) under the identification (\ref{eq:tensor}).

\section{Miura map and Arakawa-Molev's generators}\label{section:Miura_map}

The Miura map is an injective homomorphism
\begin{equation}
	\rect \to V^{\tilde{\kappa}}(\frg_{0}) \cong V^{\tilde{\kappa}}(\gl_{n})^{\otimes l} \label{eq:Miura}
\end{equation}
of vertex algebras.
See \cite[Section~3]{MR3598875} for details.
Consider the current algebras of (\ref{eq:Miura}), then we have an injective homomorphism
\begin{equation}
	\mathfrak{U}(\rect) \to U(\hat{\gl}_{n}^{\alpha})_{\mathrm{comp}}^{\otimes l} \label{eq:Miura_algebra}
\end{equation}
of associative algebras.
We also call (\ref{eq:Miura_algebra}) Miura map.
In the sequel, we identify $\rect$ with its image by the Miura map and regard it as a subalgebra of $V^{\tilde{\kappa}}(\gl_{n})^{\otimes l}$.
Also we regard $\mathfrak{U}(\rect)$ as a subalgebra of $U(\hat{\gl}_{n}^{\alpha})_{\mathrm{comp}}^{\otimes l}$.

We recall generators of $\rect$ introduced by Arakawa-Molev~\cite{MR3598875}.
Let $A^{[s]} = \left( E_{i,j}^{[s]}[-1]_{(-1)} \right)_{1 \leq i,j \leq n}$ be the $n \times n$ matrix whose $(i,j)$-entry is $E_{i,j}^{[s]}[-1]_{(-1)}$.
Let $\tau$ be a formal operator satisfying
\[
	[X[-m], \tau] = \alpha m X[-m-1]
\]
for $X \in \frg_{0}$.
We define a matrix $W^{(r)}$ by
\begin{equation}
	(\tau + A^{[1]}) (\tau + A^{[2]}) \cdots (\tau + A^{[l]}) = \tau^{l} + \sum_{r=1}^{l} \tau^{l-r} W^{(r)}. \label{eq:Miura_transformation}
\end{equation}
In (\ref{eq:Miura_transformation}), if we obtain an element of the form $X_1[-m_1]_{(-1)} \cdots X_a[-m_a]_{(-1)}$, then we regard the right-most $X_a[-m_a]_{(-1)}$ as $X_a[-m_a]$.
Then the $(i,j)$-entry, denoted by $W_{i,j}^{(r)}$, of $W^{(r)}$ gives an element of $V^{\tilde{\kappa}}(\frg_{0})$.

\begin{thm}[\cite{MR3598875}, Theorem~3.1 and Corollary~3.2]
The elements $W_{i,j}^{(r)}$ belong to $\rect$ for all $r=1, \ldots, l$ and $i,j=1, \ldots, n$.
They freely generate $\rect$ as a vertex algebra.
\end{thm}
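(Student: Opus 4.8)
\medskip
\noindent\emph{A proof proposal.} The plan is to prove both assertions together inside the Miura realization $\rect\subseteq V^{\tilde\kappa}(\frg_{0})\cong V^{\tilde\kappa}(\gl_{n})^{\otimes l}$, organizing everything by conformal weight. Expanding the product in (\ref{eq:Miura_transformation}), every monomial with a fixed number of $A$-factors has conformal weight $l$, so $W^{(r)}$ is homogeneous of weight $r$ and $\{W_{i,j}^{(r)}\mid r=1,\dots,l,\ i,j=1,\dots,n\}$ consists of $ln^{2}$ vectors, with weight $r$ occurring with multiplicity $n^{2}$. On the other hand, the Kac--Roan--Wakimoto character formula \cite{MR2013802}, applied to the $\mathfrak{sl}_{2}$-decomposition $\gl_{N}\cong n^{2}\,(V_{2l-2}\oplus V_{2l-4}\oplus\cdots\oplus V_{0})$ attached to $f=(l^{n})$, gives
\[
	\mathrm{ch}\,\rect=\prod_{r=1}^{l}\ \prod_{m\geq r}(1-q^{m})^{-n^{2}},
\]
which is exactly the character of the free vertex algebra $F$ on $n^{2}$ generators of each conformal weight $1,\dots,l$. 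Hence it suffices to show (i) each $W_{i,j}^{(r)}$ lies in $\rect$, giving a vertex algebra homomorphism $\phi\colon F\to\rect$ that sends the generators to the $W_{i,j}^{(r)}$, and (ii) $\phi$ is injective; then $\mathrm{ch}\,F=\mathrm{ch}\,\rect$ forces $\phi$ to be an isomorphism, which is the whole statement.

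\medskip
For (i) I would use the description of the image of the Miura map as a joint kernel of screening operators. For the grading of $\frg$ fixed in Section~\ref{section:Rectangular_W-algebras} the simple roots of $\gl_{N}$ lying in $\frg_{1}$ are exactly the $l-1$ block-crossing roots, and $\rect=\bigcap_{s=1}^{l-1}\Ker Q_{s}$ inside $V^{\tilde\kappa}(\gl_{n})^{\otimes l}$, where $Q_{s}$ is a derivation that involves only the $s$-th and $(s+1)$-th tensor factors and commutes with $\tau$. Since $Q_{s}$ annihilates $(\tau+A^{[t]})$ for $t\neq s,s+1$ and obeys the Leibniz rule for the noncommutative product in (\ref{eq:Miura_transformation}), it is enough to check that the single pair $(\tau+A^{[s]})(\tau+A^{[s+1]})$ is killed by $Q_{s}$ entry-wise; this is precisely the case $l=2$ and reduces to a direct operator-product computation in $V^{\tilde\kappa}(\gl_{n})^{\otimes 2}$. (Alternatively one could exhibit an explicit cocycle of the Drinfeld--Sokolov complex representing each $W_{i,j}^{(r)}$; the computation is of the same nature.)

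\medskip
For (ii) I would pass to the associated graded for the filtration on $\rect$ induced from Li's canonical filtration on $V^{\tilde\kappa}(\frg_{0})$. Then $\mathrm{gr}\,\rect$ embeds into $\mathrm{gr}\,V^{\tilde\kappa}(\frg_{0})$, a commutative differential polynomial ring on the symbols $\bar A_{i,j}^{[s]}$ of $E_{i,j}^{[s]}[-1]$, and $\mathrm{gr}\,\phi$ sends the $r$-th generators to the entries of the $r$-th coefficient of the classical matrix Miura transformation $(\partial+\bar A^{[1]})\cdots(\partial+\bar A^{[l]})=\partial^{l}+\sum_{r=1}^{l}\partial^{l-r}\bar W^{(r)}$. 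It then remains to check that the $ln^{2}$ elements $\bar W_{i,j}^{(r)}$ are differentially algebraically independent; this is a triangularity/leading-term argument as in the classical Drinfeld--Sokolov reduction, since modulo $\partial$-derivatives and lower-order terms $\bar W_{i,j}^{(r)}$ is the elementary-symmetric-type sum $\sum_{s_{1}<\cdots<s_{r}}(\bar A^{[s_{1}]}\cdots\bar A^{[s_{r}]})_{i,j}$. Consequently $\mathrm{gr}\,\phi$, hence $\phi$, is injective, and combining this with the character count finishes the proof.

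\medskip
The genuine work lies in the two concrete computations this scaffolding isolates: the screening-invariance of the one-step product $(\tau+A^{[s]})(\tau+A^{[s+1]})$ in (i), and the identification of the symbols $\bar W_{i,j}^{(r)}$ in (ii). I expect the latter to be the main obstacle: one must control the noncommutative corrections produced when (\ref{eq:Miura_transformation}) is put into the normal form $\tau^{l}+\sum_{r}\tau^{l-r}W^{(r)}$ --- both the ones coming from $[X[-m],\tau]=\alpha m\,X[-m-1]$ and the ones coming from the non-commutativity of the entries within a single matrix $A^{[s]}$ --- and check that none of them disturbs the leading symbol. Everything else is conformal-weight bookkeeping and the standard comparison between a filtered vertex algebra and its associated graded.
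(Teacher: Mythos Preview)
The paper does not prove this statement at all: it is quoted verbatim as \cite[Theorem~3.1 and Corollary~3.2]{MR3598875} and used as a black box. Everything that follows in the paper (Example~4.1, Remark~\ref{rem:difference}, Proposition~\ref{prop:generators}, and the formulas in Example~\ref{exam:generators}) simply takes the Arakawa--Molev generators as given. So there is no ``paper's own proof'' to compare your proposal against.

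That said, your outline is the standard route to this kind of statement and is, in spirit, what Arakawa--Molev do: identify $\rect$ inside $V^{\tilde\kappa}(\gl_n)^{\otimes l}$ as the joint kernel of $l-1$ screening operators, check screening-invariance of the one-step product (reducing to $l=2$), and then match characters using a freeness/independence argument on the associated graded. Two cautions if you intend to flesh this out. First, the description of the Miura image as an intersection of screening kernels is delicate away from generic level; you should either restrict to generic $k$ first and then argue by continuity/specialization, or invoke a result (e.g.\ Genra \cite{MR3663604}) that handles arbitrary level. Second, your sentence ``every monomial with a fixed number of $A$-factors has conformal weight $l$'' is not the right bookkeeping: assign weight~$1$ to both $\tau$ and to each entry of $A^{[s]}$, so that the whole product has total weight $l$ and the coefficient $W^{(r)}$ of $\tau^{l-r}$ has weight $r$; the point is that the relation $[X[-m],\tau]=\alpha m\,X[-m-1]$ is weight-preserving under this convention, which is what makes the normal form homogeneous.
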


\begin{exam}
We have
\begin{align*}
	W_{i,j}^{(1)} &= \sum_{s=1}^{l} E_{i,j}^{[s]}[-1],\\
	W_{i,j}^{(2)} &= \sum_{1 \leq s_1 < s_2 \leq l} \sum_{a=1}^{n} E_{i,a}^{[s_1]}[-1]_{(-1)} E_{a,j}^{[s_2]}[-1] + \alpha \sum_{s=1}^{l} (l-s) E_{i,j}^{[s]}[-2].
\end{align*}
\end{exam}

\begin{rem}\label{rem:difference}
The elements $W_{i,j}^{(r)}$ are slightly different from ones in the literature.
\begin{enumerate}
\item
Let ${}^{\mathrm{AM}}W_{i,j}^{(r)}$ denote the image by the Miura map of the generators given in \cite[Corollary~3.2]{MR3598875}.
They are defined by
\begin{equation*}
	(- \tau + A^{[1]}) (- \tau + A^{[2]}) \cdots (- \tau + A^{[l]}) = (- \tau)^{l} + \sum_{r=1}^{l} \left( {}^{\mathrm{AM}}W_{j,i}^{(r)} \right)_{1 \leq i,j \leq n} (- \tau)^{l-r}.
\end{equation*}
We have an algebra automorphism $\sigma$ on $V^{\tilde{\kappa}}(\frg_{0})$ induced from $X^{[s]} \mapsto X^{[l-s+1]}$ for $X \in \gl_{n}$.
Under the identification
\[
	V^{\tilde{\kappa}}(\frg_{0}) \cong V^{\tilde{\kappa}}(\gl_{n})^{\otimes l},
\]
$\sigma$ corresponds to the operation which reverses the order of the tensor product.
Then we have
\[
	W_{i,j}^{(r)} = \sigma \left( {}^{\mathrm{AM}}W_{j,i}^{(r)} \right).
\]

\item
In \cite{MR4061286}, generators ${{U_{(r)}}^{a}}_{b}$ are used.
They are defined by
\begin{equation*}
	(\tau + A^{[1]}) (\tau + A^{[2]}) \cdots (\tau + A^{[l]}) = \tau^{l} + \sum_{r=1}^{l} \left( {{U_{(r)}}^{a}}_{b} \right)_{1 \leq a,b \leq n} \tau^{l-r}.
\end{equation*}
The relation with our generators $W_{i,j}^{(r)}$ for $r=1,2$ is given as follows:
\[
	W_{i,j}^{(1)} = {{U_{(1)}}^{i}}_{j}, \quad W_{i,j}^{(2)} = {{U_{(2)}}^{i}}_{j} + (l-1)\alpha \partial {{U_{(1)}}^{i}}_{j}.
\]
Formulas for OPE appearing in this paper can be obtained from ones given in \cite[Section~3.1]{MR4061286}.
\end{enumerate}
\end{rem}

The current algebra $\mathfrak{U}(\rect)$ is an associative algebra which has topological generators
\[
	\{ W_{i,j}^{(r)}t^{m} \mid 1 \leq r \leq l, \ \ 1 \leq i,j \leq n, \ \ m \in \bbZ \},
\]
where the element $W_{i,j}^{(r)}t^{m}$ corresponds to the Fourier coefficient $(W_{i,j}^{(r)})_{(m)}$ of the element $W_{i,j}^{(r)}$ of $\rect$.
We set
\[
	W_{i,j}^{(r)}(m) = W_{i,j}^{(r)}t^{m+r-1}.
\] 

\begin{exam}\label{exam:generators}
We have
\begin{align*}
	W_{i,j}^{(1)}(m) &= \sum_{s=1}^{l} E_{i,j}^{[s]}(m),\\
	W_{i,j}^{(2)}(m) &= \sum_{r \in \bbZ} \sum_{1 \leq s_1 < s_2 \leq l} \sum_{a=1}^{n} E_{i,a}^{[s_1]}(m+r) E_{a,j}^{[s_2]}(-r) - (m+1)\alpha \sum_{s=1}^{l} (l-s) E_{i,j}^{[s]}(m).
\end{align*}
\end{exam}

The elements $W_{i,j}^{(1)}(m)$ satisfy the following commutation relation:
\[
	[W_{i,j}^{(1)}(m), W_{p,q}^{(1)}(m')] = \delta_{pj} W_{i,q}^{(1)}(m+m') - \delta_{iq} W_{p,j}^{(1)}(m+m') + m \delta_{m+m',0} l (\delta_{iq}\delta_{jp}\alpha + \delta_{ij}\delta_{pq}).
\]
That is, they generate the affine Lie algebra of $\mathfrak{gl}_{n}$ associated with the bilinear form $l \times \tilde{\kappa}$.

The set $\{ W_{i,j}^{(r)} \mid 1 \leq r \leq l, \ 1 \leq i,j \leq n \}$ forms a free generator of $\rect$.
One can reduce the generators to the elements for $r=1,2$ under the assumption $n \geq 2$ and $\alpha \neq 0$, although they do not freely generate $\rect$. 

\begin{prop}[\cite{Affine_super_Yangians_and_rectangular_W-superalgebras}]\label{prop:generators}
Assume $n \geq 2$ and $\alpha \neq 0$.
Then the elements $W_{i,j}^{(r)}$ for $r=1,2$ and all $i,j$ generate $\rect$ as a vertex algebra.
In particular, $W_{i,j}^{(r)}(m)$ for $r=1,2$ and all $i,j,m$ topologically generate $\mathfrak{U}(\rect)$ as an associative algebra.
\end{prop}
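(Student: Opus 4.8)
The plan is to show that the subalgebra of $\rect$ generated by the $W_{i,j}^{(r)}$ for $r=1,2$ already contains all the higher generators $W_{i,j}^{(r)}$ for $r \geq 3$; then the second assertion follows from the first by taking Fourier coefficients, since the current algebra $\mathfrak{U}(\rect)$ is topologically generated by the modes of any set of vertex-algebra generators. To prove the first assertion I would argue by induction on $r$: assuming every $W_{i,j}^{(r')}$ with $r' < r$ lies in the subalgebra $\mathcal{V}$ generated by $W_{i,j}^{(1)}$ and $W_{i,j}^{(2)}$ (for all $i,j$), I want to exhibit $W_{i,j}^{(r)}$ as an element of $\mathcal{V}$.

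First I would record the relevant commutation/OPE data. The modes $W_{i,j}^{(1)}(m)$ generate a copy of the affine Lie algebra $\hat{\gl}_n$ at level $l\tilde{\kappa}$ (as stated right after Example~\ref{exam:generators}), so $\mathcal{V}$ is in particular a module over this affine algebra, and bracketing with $W_{i,j}^{(1)}(m)$ preserves $\mathcal{V}$. The key computational input is the OPE of $W^{(1)}$ with $W^{(r)}$: from the Miura realization (\ref{eq:Miura_transformation}) one sees that $\sum_{s} E_{i,j}^{[s]}[-1]$ acts on the product $(\tau + A^{[1]})\cdots(\tau+A^{[l]})$ essentially diagonally, and a direct (but routine) OPE computation — of the type made available by \cite[Section~3.1]{MR4061286} via Remark~\ref{rem:difference}(ii) — gives a formula expressing the singular part of $W^{(1)}(z)W^{(r)}(w)$ in terms of $W^{(r)}$, $W^{(r-1)}$ and their derivatives, with the coefficient of the $W^{(r-1)}$-term proportional to $\alpha$. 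More precisely I expect a relation of the schematic shape $[W_{i,j}^{(1)}(m), W_{p,q}^{(r)}(m')] = (\text{affine }\gl_n\text{-type terms in }W^{(r)}) + c_r\,\alpha\,m\,(\text{linear combination of }W^{(\le r-1)}\text{ modes})$ for an explicit nonzero constant $c_r$ (essentially $c_r = (l-?)$ type binomial data coming from the $\tau$-shifts). Since $\alpha \neq 0$, solving this identity for the $W^{(r-1)}$-contribution is not yet what I want; instead I would go the other direction.

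The cleanest route is to produce $W^{(r)}$ from lower data by a bracket that \emph{raises} the superscript. Here is the mechanism: in the generating series $\prod_{s=1}^l(\tau + A^{[s]})$, the operator $\tau$ satisfies $[X[-m],\tau] = \alpha m X[-m-1]$, so $\tau$ "differentiates" and the failure of $W^{(1)}$ to commute with $W^{(r-1)}$ produces, among other things, a term involving a product $E_{i,a}^{[?]}[-1]_{(-1)}W_{a,j}^{(r-1)}$-type expression whose normally-ordered product is, up to lower terms, exactly what feeds into $W^{(r)}$. Concretely I would show that the normally ordered product ${}:\!W^{(1)}W^{(r-1)}\!:$ (a matrix product over the internal index), computed inside $V^{\tilde\kappa}(\gl_n)^{\otimes l}$ using the Miura formula, equals $r\,W^{(r)}$ plus a $\bbC$-linear combination of $W^{(r')}$ with $r' < r$ and their derivatives (again with $\alpha$-dependent, but harmless, coefficients) — this is the combinatorial heart and follows by expanding both sides of (\ref{eq:Miura_transformation}) and matching the coefficient of $\tau^{l-r}$. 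Because $\mathcal{V}$ is closed under normally ordered products and derivatives and contains all $W^{(r')}$ for $r' < r$ by the inductive hypothesis, this identity forces $W^{(r)} \in \mathcal{V}$. The induction then terminates at $r = l$, proving that $\{W_{i,j}^{(r)} \mid r=1,2\}$ generates $\rect$ as a vertex algebra; passing to modes gives the statement about $\mathfrak{U}(\rect)$.

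The main obstacle I anticipate is the bookkeeping in that last identity: verifying that the coefficient of $W^{(r)}$ in the relevant bracket or normally ordered product is a \emph{nonzero} scalar (so that one can actually solve for $W^{(r)}$), and that all the correction terms genuinely lie among $W^{(r')}$ with $r' < r$ rather than producing new, non-Miura-recognizable elements. This amounts to a careful expansion of $(\tau + A^{[1]})\cdots(\tau + A^{[l]})$ keeping track of how $\tau$'s and $A^{[s]}$'s interleave, using $[X[-m],\tau]=\alpha m X[-m-1]$ to move all $\tau$'s to one side; the hypothesis $\alpha \neq 0$ is used precisely to ensure these interleaving corrections are under control (for $\alpha = 0$ the operator $\tau$ becomes central on the relevant subspace and the argument collapses), and $n \geq 2$ guarantees the internal matrix products are nontrivial enough to see all of $\gl_n$. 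I would isolate this expansion as a lemma — proved by a short induction on $l$ — and relegate the explicit OPE computations to the appendix in the spirit of Remark~\ref{rem:difference}(ii).
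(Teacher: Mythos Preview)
The paper does not give its own proof of this proposition; it is quoted from \cite{Affine_super_Yangians_and_rectangular_W-superalgebras}. The only related argument appearing in the paper is the proof of the refinement Proposition~\ref{prop:generators2} in Appendix~\ref{section:surjectivity}, which runs in the \emph{opposite} direction: assuming Proposition~\ref{prop:generators}, it shows that the full collection of $W^{(1)}_{i,j}(m)$ and $W^{(2)}_{i,j}(m)$ is already produced from a small explicit list by iterated commutators (using Lemma~\ref{lem:OPE2-1} and, for the step where $\alpha\neq 0$ is essential, the identity $[W^{(2)}_{ii}(m)-W^{(2)}_{jj}(m),\,W^{(2)}_{ii}(m')-W^{(2)}_{jj}(m')] = (m'-m)\alpha\bigl(W^{(2)}_{ii}(m+m')+W^{(2)}_{jj}(m+m')\bigr)+P$). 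So there is no in-paper argument against which to check your induction.

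That said, your proposed inductive step does not work as written. The claim that the normally ordered matrix product $\sum_{a}{:}W^{(1)}_{i,a}W^{(r-1)}_{a,j}{:}$ equals $r\,W^{(r)}_{i,j}$ plus a combination of lower $W^{(r')}$ is false. Since $\rect$ is \emph{freely} generated by the $W^{(r')}_{p,q}$, the element ${:}W^{(1)}_{i,a}W^{(r-1)}_{a,j}{:}$ is, up to quasi-symmetry reorderings, a PBW monomial of degree two in the free generators; the single generator $W^{(r)}$ (PBW degree one, conformal weight $r$) simply does not appear in its PBW expansion. The reordering corrections are built from $W^{(1)}_{(n)}W^{(r-1)}$ for $n\ge 0$, which have conformal weight $\le r-1$, so their derivatives again contribute no $W^{(r)}$-term. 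Put differently, your Miura computation correctly identifies inside ${:}W^{(1)}W^{(r-1)}{:}$ the \emph{product piece} of the Miura formula for $W^{(r)}$, but the leftover ``reversed order'' ($s>t$) and ``diagonal'' ($s=t$) pieces are not lower $W^{(r')}$'s at all---they exactly cancel the would-be $W^{(r)}$ contribution when everything is rewritten in the free $W$-basis, as the PBW argument forces.

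The mechanism that does raise the superscript is the \emph{singular} part of the OPE, not the $(-1)$-product. You came close to it and then turned away: you bracketed $W^{(1)}$ against $W^{(r)}$, observed (correctly) that the top weight in the OPE is $r$, so nothing new is created, and switched to normal ordering. The fix is to bracket $W^{(2)}$ against $W^{(r-1)}$ instead: then the top weight in the singular OPE is $2+(r-1)-1=r$, and an explicit computation (the $r=2$ case is Lemma~\ref{lem:OPE2-1}; the general case is carried out in \cite{Affine_super_Yangians_and_rectangular_W-superalgebras}) shows that $W^{(r)}$ appears there with a nonzero coefficient, allowing the induction to proceed.
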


\section{Parabolic induction}\label{section:Parabolic_induction}

Let us recall the parabolic induction for the rectangular $W$-algebras (see \cite[Introduction,  Theorem~A and B]{MR4091897} for a general result).

Choose integers $l_1, l_2 \geq 1$ satisfying $l=l_1+l_2$.
Set $N_1 = l_1 n$ and $N_2 = l_2 n$ so that $N=N_1+N_2$.
We also take $k_1$ and $k_2$ so that
\begin{equation}
	k+N = k_1+N_1 = k_2+N_2 \label{eq:relations_among_levels}
\end{equation}
holds.
Since the parameter $\alpha$ is defined by $k+N = \alpha + n$, the equality (\ref{eq:relations_among_levels}) means that $\alpha$ is unchanged if we replace $(k,l,N)$ with $(k_i,l_i,N_i)$ ($i=1,2$).

For $(k_i,l_i,N_i)$ ($i=1,2$), we define an invariant symmetric bilinear form $\kappa_{i}$ on $\gl_{N_i}$ in a way similar to (\ref{eq:kappa}), and a nilpotent element $f_{i}$ as in (\ref{eq:nilpotent}) corresponding to the partition ($l_i^{n}$).
Associated with the data, $\mathcal{W}^{\kappa_1}(\gl_{N_1},f_1)$ and $\mathcal{W}^{\kappa_2}(\gl_{N_2},f_2)$ are defined.
We have the Miura maps
\[
	\mathcal{W}^{\kappa_i}(\gl_{N_i},f_i) \to V^{\tilde{\kappa}_i}(\gl_{n})^{\otimes l_i}
\]
for $i=1,2$.
Since $\alpha$ is taken uniformly for $(k,l,N)$ and $(k_i,l_i,N_i)$, $\tilde{\kappa}_i$ on $\gl_{n}$ is the same as $\tilde{\kappa}$.
Hence $\mathcal{W}^{\kappa_1}(\gl_{N_1},f_1) \otimes \mathcal{W}^{\kappa_2}(\gl_{N_2},f_2)$ can be regarded as a subalgebra of $V^{\tilde{\kappa}}(\gl_{n})^{\otimes l}$ by the tensor product of the Miura maps.
Then it turns out that $\mathcal{W}^{\kappa_1}(\gl_{N_1},f_1) \otimes \mathcal{W}^{\kappa_2}(\gl_{N_2},f_2)$ contains $\rect$ inside $V^{\tilde{\kappa}}(\gl_{n})^{\otimes l}$.  
We denote the inclusion by $\Delta_{l_1,l_2}$:
\[
	\Delta_{l_1,l_2} \colon \rect \to \mathcal{W}^{\kappa_1}(\gl_{N_1},f_1) \otimes \mathcal{W}^{\kappa_2}(\gl_{N_2},f_2).
\]

The generators given in Section~\ref{section:Miura_map} are related by $\Delta_{l_1,l_2}$ as follows.
Divide the left-hand side $(\tau + A^{[1]}) \cdots (\tau + A^{[l]})$ of (\ref{eq:Miura_transformation}) into two blocks:
\[
	(\tau + A^{[1]}) \cdots (\tau + A^{[l_1]}) \ \text{ and } \ (\tau + A^{[l_1+1]}) \cdots (\tau + A^{[l]}).
\]
Then we can similarly define generators for $\mathcal{W}^{\kappa_1}(\gl_{N_1},f_1)$ and $\mathcal{W}^{\kappa_2}(\gl_{N_2},f_2)$, and we can relate them to the generators $W_{i,j}^{(r)}$ of $\rect$.
This gives a description of $\Delta_{l_1,l_2}$ in terms of generators.
We give its explicit form for $r=1,2$ as follows:
\[
	\Delta_{l_1,l_2}(W_{i,j}^{(1)}(m)) = W_{i,j}^{(1)}(m) \otimes 1 + 1 \otimes W_{i,j}^{(1)}(m),
\]
\begin{equation*}
	\begin{split}
		\Delta_{l_1,l_2}(W_{i,j}^{(2)}(m)) &= W_{i,j}^{(2)}(m) \otimes 1 + 1 \otimes W_{i,j}^{(2)}(m) + \sum_{r \in \bbZ} \sum_{a=1}^{n} W_{i,a}^{(1)}(m+r) \otimes W_{a,j}^{(1)}(-r) \\
		&\qquad - (m+1) l_2 \alpha W_{i,j}^{(1)}(m) \otimes 1.
	\end{split}
\end{equation*} 

We will relate $\Delta_{l_1,l_2}$ to the coproduct of the affine Yangian in Corollary~\ref{cor:parabolic_induction} and \ref{cor:parabolic_induction2} as a consequence of the main result of this paper.

\section{Affine Yangians}\label{section:Affine_Yangians}

Assume $n \geq 3$ until the end of Section~\ref{section:Commuting_elements}.
Let 
\[
	a_{ij} =
	\begin{cases}
		2  &\text{if } i=j, \\
		-1 &\text{if } i=j \pm 1 \mod n, \\
		0  &\text{otherwise}
	\end{cases}
\]
be entries of the Cartan matrix of type $A_{n-1}^{(1)}$.
We use the notation $\{x,y\}=xy+yx$.

\begin{dfn}\label{dfn:affine_Yangian}
The affine Yangian $\affY = Y_{\hbar,\ve}(\affsl)$ is the algebra over $\bbC$ generated by $X_{i,r}^{+}$, $X_{i,r}^{-}$, $H_{i,r}$ ($i = 0,1,\ldots,n-1$ and $r \in \mathbb{Z}_{\geq 0})$ with parameters $\hbar, \ve \in \bbC$ subject to the relations:
\begin{equation*}
	[H_{i,r},H_{j,s}]=0, \qquad [X_{i,r}^{+},X_{j,s}^{-}] = \delta_{ij} H_{i,r+s}, \qquad [H_{i,0},X_{j,s}^{\pm}]=\pm a_{ij} X_{j,s}^{\pm} \quad \text{ for any $i,j$,}
\end{equation*}
\begin{align*}
	&[H_{i, r+1}, X_{j, s}^{\pm}] - [H_{i, r}, X_{j, s+1}^{\pm}] 
	= \pm \dfrac{a_{ij}}{2} \hbar \{ H_{i, r}, X_{j, s}^{\pm} \}, \\
	&[X_{i, r+1}^{\pm}, X_{j, s}^{\pm}] - [X_{i, r}^{\pm}, X_{j, s+1}^{\pm}] 
 	= \pm \dfrac{a_{ij}}{2} \hbar \{ X_{i, r}^{\pm}, X_{j, s}^{\pm} \} \quad \text{for $(i,j) \neq (0,n-1), (n-1,0)$,}
\end{align*}
\begin{equation*}
	[H_{n-1, r+1}, X_{0, s}^{\pm}] - [H_{n-1, r}, X_{0, s+1}^{\pm}] 
	=\mp\dfrac{\hbar}{2}\{H_{n-1, r}, X_{0, s}^{\pm}\}+\left( \dfrac{n}{2}\hbar + \ve \right)[H_{n-1, r}, X_{0, s}^{\pm}], 
\end{equation*}
\begin{equation*}
	[H_{0, r+1}, X_{n-1, s}^{\pm}] - [H_{0, r}, X_{n-1, s+1}^{\pm}] 
	=\mp\dfrac{\hbar}{2}\{H_{0, r}, X_{n-1, s}^{\pm}\}-\left( \dfrac{n}{2}\hbar + \ve \right)[H_{0, r}, X_{n-1, s}^{\pm}], 
\end{equation*}
\begin{equation*}
	[X_{n-1, r+1}^{\pm}, X_{0, s}^{\pm}] - [X_{n-1, r}^{\pm}, X_{0, s+1}^{\pm}] 
	=\mp\dfrac{\hbar}{2}\{X_{n-1, r}^{\pm}, X_{0, s}^{\pm}\}+\left( \dfrac{n}{2}\hbar + \ve \right)[X_{n-1, r}^{\pm}, X_{0, s}^{\pm}], 
\end{equation*}
\begin{equation*}
	\sum_{w \in \frakS_{1-a_{ij}}} [X_{i,r_{w(1)}}^{\pm}, [X_{i,r_{w(2)}}^{\pm}, \dots, [X_{i,r_{w(1 - a_{ij})}}^{\pm}, X_{j,s}^{\pm}]\dots]] = 0 \quad \text{for $i \neq j$.}
\end{equation*}
\end{dfn}

Set
\[
	X_{i}^{\pm}(z) = \hbar \sum_{r \geq 0} X_{i,r}^{\pm} z^{-r-1}, \quad H_{i}(z) = 1 + \hbar \sum_{r \geq 0} H_{i,r} z^{-r-1}.
\]

\begin{rem}
\begin{enumerate}
\item
The defining relations given in Definition~\ref{dfn:affine_Yangian} is different from those in \cite{MR4207398}.
Generators in \cite{MR4207398} are denoted by $x_{i,r}^{+}$, $x_{i,r}^{-}$, $h_{i,r}$ ($i = 0,1,\ldots,n-1$ and $r \in \mathbb{Z}_{\geq 0})$ with parameters $\ve_1, \ve_2$.
We set $\hbar = \ve_1 + \ve_2$ and
\[
	x_{i}^{\pm}(z) = \hbar \sum_{r \geq 0} x_{i,r}^{\pm} z^{-r-1}, \quad h_{i}(z) = 1 + \hbar \sum_{r \geq 0} h_{i,r} z^{-r-1}.
\]
Then the isomorphism is given by
\[
	X_{i}^{\pm}(z) = x_{i}^{\pm}\left(z-\dfrac{i}{2}( \ve_1-\ve_2 )\right), \quad
	H_{i}(z) = h_{i}\left(z-\dfrac{i}{2}( \ve_1-\ve_2 )\right)
\]
for $i=0,1,\ldots,n-1$ and
\[
	\hbar = \ve_1 + \ve_2, \quad \ve = -n\ve_2.
\]
The condition among the parameters and the central element for the evaluation map in \cite[Theorem~3.8]{MR4207398} is $\hbar c = - n \ve_1$.
This is equivalent to $\hbar(c + n) = - \ve$ in terms of the parameters in this paper. 

\item
The presentation in Definition~\ref{dfn:affine_Yangian} is given by Feigin-Finkelberg-Negut-Rybnikov \cite[3.15]{MR2827177}.
They use parameters $\hbar$, $\hbar'$.
The relation to our parameters is:
\[
	\text{$\hbar$ is the same}, \quad \hbar' = -n\ve_1 = -(n\hbar + \ve).
\]
\end{enumerate}
\end{rem}

The following is an immediate consequence of the defining relations.
\begin{prop}\label{prop:degree_one_generator}
The affine Yangian $\affY$ is generated by $X_{i,0}^{+}$, $X_{i,0}^{-}$, and $X_{i,1}^{+}$ for $i=0,1,\ldots,n-1$.
\end{prop}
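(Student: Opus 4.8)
The plan is to show that all generators of the form $H_{i,r}$, $X_{i,r}^{+}$, $X_{i,r}^{-}$ for $r\geq 0$ can be recovered from the finite list $X_{i,0}^{+}$, $X_{i,0}^{-}$, $X_{i,1}^{+}$ ($i=0,\dots,n-1$) by repeatedly applying the defining relations. First I would produce $H_{i,0} = [X_{i,0}^{+},X_{i,0}^{-}]$ and $H_{i,1} = [X_{i,1}^{+},X_{i,0}^{-}]$ directly from the relation $[X_{i,r}^{+},X_{j,s}^{-}] = \delta_{ij}H_{i,r+s}$. Next I would obtain $X_{i,1}^{-}$: the Serre-type relations together with the relations of the form $[X_{i,r+1}^{\pm},X_{j,s}^{\pm}] - [X_{i,r}^{\pm},X_{j,s+1}^{\pm}] = \pm\frac{a_{ij}}{2}\hbar\{X_{i,r}^{\pm},X_{j,s}^{\pm}\}$ (and their deformed versions at $(0,n-1)$, $(n-1,0)$) let one rewrite a suitable bracket among the degree-$0$ and degree-$1$ positive generators and the degree-$0$ negative generators so as to extract $X_{i,1}^{-}$; alternatively one uses the well-known trick of bracketing $X_{i,0}^{-}$ against an element built from $X_{j,0}^{\pm}$ for a neighbouring node $j$, which changes the root but raises the spectral degree. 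In either case, once $X_{i,1}^{+}$ and $X_{i,1}^{-}$ are available for all $i$, the relation $[X_{i,r}^{+},X_{j,s}^{-}]=\delta_{ij}H_{i,r+s}$ again gives $H_{i,2}$, and an evident induction on $r$ produces everything.

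The inductive step is the core of the argument: assuming $X_{j,s}^{\pm}$ and $H_{j,s}$ are in the subalgebra generated by our list for all $s\leq r$ and all $j$, I would use the relations $[H_{i,r+1},X_{j,s}^{\pm}] - [H_{i,r},X_{j,s+1}^{\pm}] = \pm\frac{a_{ij}}{2}\hbar\{H_{i,r},X_{j,s}^{\pm}\}$ (with the deformed variants at the affine nodes) to solve for $X_{j,r+1}^{\pm}$ whenever $a_{ij}\neq 0$ — and since the affine $A_{n-1}^{(1)}$ Dynkin diagram is connected with no isolated vertices, every node $j$ has such a neighbour $i$. Concretely, take $i$ adjacent to $j$; then $[H_{i,r},X_{j,1}^{\pm}] - [H_{i,r+1},X_{j,0}^{\pm}] = \mp\frac{a_{ij}}{2}\hbar\{H_{i,r},X_{j,0}^{\pm}\}$ is wrong in degree, so instead I would iterate: from $[H_{i,1},X_{j,s}^{\pm}] - [H_{i,0},X_{j,s+1}^{\pm}] = \pm\frac{a_{ij}}{2}\hbar\{H_{i,0},X_{j,s}^{\pm}\}$ together with $[H_{i,0},X_{j,s+1}^{\pm}] = \pm a_{ij}X_{j,s+1}^{\pm}$, one gets $X_{j,s+1}^{\pm}$ in terms of $X_{j,s}^{\pm}$, $H_{i,1}$, and $[H_{i,1},X_{j,s}^{\pm}]$, all of which lie in the subalgebra by the inductive hypothesis (using that $H_{i,1}$ is already produced in the first step). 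This closes the induction, and then $H_{i,r+s}=[X_{i,r}^{+},X_{i,s}^{-}]$ fills in all the $H$'s.

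The main obstacle I anticipate is bookkeeping at the two distinguished edges $(0,n-1)$ and $(n-1,0)$, where the relations carry the extra $\left(\frac{n}{2}\hbar+\ve\right)[\,\cdot\,,\,\cdot\,]$ term; one must check that the rearrangement solving for $X_{j,s+1}^{\pm}$ still isolates that generator with an invertible (nonzero, parameter-independent) coefficient, i.e. that the coefficient of $X_{j,s+1}^{\pm}$ is $\pm a_{ij}=\mp 1\neq 0$ and is not contaminated by the $\ve$-dependent correction. Since the deformation only modifies the right-hand side and the commutator $[H_{i,0},X_{j,s+1}^{\pm}]$ on the left is undeformed, this works, but it deserves an explicit line. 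A secondary point is that the argument genuinely uses $n\geq 3$ so that the affine diagram is a cycle with every node having two distinct neighbours and no $a_{ij}=\pm 2$ off the diagonal; for $n\leq 2$ the edge multiplicities differ and the statement as phrased would need adjustment. Modulo these checks, the proposition follows by the two-step reduction (first $H_{i,0},H_{i,1},X_{i,1}^{-}$, then induction on $r$) sketched above.
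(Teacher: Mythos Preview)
Your argument is correct, and it is precisely the kind of verification the paper leaves implicit: the paper states only that the proposition ``is an immediate consequence of the defining relations'' and gives no further proof.

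One simplification worth noting: you do not need to pass to an adjacent node~$j$ and then worry about the deformed relations at the edges $(0,n-1)$ and $(n-1,0)$. Take $i=j$ instead. Since $a_{ii}=2$ and the pair $(i,i)$ is never one of the exceptional pairs, the undeformed relation
\[
[H_{i,1},X_{i,s}^{\pm}] - [H_{i,0},X_{i,s+1}^{\pm}] = \pm\hbar\{H_{i,0},X_{i,s}^{\pm}\}
\]
together with $[H_{i,0},X_{i,s+1}^{\pm}] = \pm 2\,X_{i,s+1}^{\pm}$ gives
\[
X_{i,s+1}^{\pm} = \pm\tfrac{1}{2}\Big([H_{i,1},X_{i,s}^{\pm}] \mp \hbar\{H_{i,0},X_{i,s}^{\pm}\}\Big),
\]
so once $H_{i,0}$, $H_{i,1}$, and $X_{i,s}^{\pm}$ are in the subalgebra, so is $X_{i,s+1}^{\pm}$. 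This handles in particular the case $s=0$, sign~$-$, producing $X_{i,1}^{-}$ directly and making your Step~3 (which is somewhat vague as written) unnecessary. The induction then proceeds with no edge-case bookkeeping, and $H_{i,r}=[X_{i,r}^{+},X_{i,0}^{-}]$ fills in the Cartan generators. This also shows that the argument does not actually rely on $n\geq 3$ for the existence of two distinct neighbours; the diagonal $i=j$ suffices.
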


We define a grading of $\affY$ by setting the degree of $X_{0,r}^{\pm}$ to be $\pm 1$ and that of other generators to be $0$.
Then we denote by $\compaffY$ the standard degree-wise completion of $\affY$.
Moreover $\compaffY^{\otimes l}$ denotes the degree-wise completed tensor product defined in the same way.
The target space of the coproduct will be $\compaffY^{\otimes 2}$.

\section{Coproduct}\label{section:Coproduct}

We introduce the coproduct for $\affY$ according to \cite{MR3861718}. 
In \cite{MR3861718}, Guay-Nakajima-Wendlandt constructed the coproduct for affine Yangians, whose explicit form was first discovered by Guay~\cite{MR2323534} for type A.

The affine Lie algebra $\affsl$ is defined by
\[
	\hat{\fraksl}_{n} = \fraksl_{n}[t,t^{-1}] \oplus \bbC c,
\]
\[
	[X t^{m}, Y t^{m'}] = [X,Y] t^{m+m'} + m \delta_{m+m',0} \tr(XY) c, \quad \text{$c$ is central}.
\]
We prepare some notations for $\affsl$.
We denote the element $X t^{m}$ by $X(m)$.
Set
\begin{gather*}
	X_{0}^{+} = E_{n,1}(1), \quad X_{0}^{-} = E_{1,n}(-1), \quad H_{0} = E_{n,n} - E_{1,1} + c,\\
	X_{i}^{+} = E_{i,i+1}, \quad X_{i}^{-} = E_{i+1,i}, \quad H_{i} = E_{i,i}-E_{i+1,i+1} \ \ (i \neq 0).
\end{gather*}
Let $\hat{\Delta}_{+}$ and $\hat{\Delta}_{+}^{\mathrm{re}}$ be the set of positive roots and of positive real roots.
Let $\mult \gamma$ denote the multiplicity of a root $\gamma$.
We fix root vectors $X_{\pm \gamma}^{(r)}$ for each $\gamma \in \hat{\Delta}_{+}$ and $r=1,\ldots,\mult \gamma$ satisfying $(X_{\gamma}^{(r)},X_{-\gamma}^{(s)})=\delta_{rs}$ with respect to the standard invariant symmetric bilinear form.
The simple roots are denoted by $\{ \alpha_i \mid i=0,1,\ldots,n-1\}$.
 
The subalgebra of $\affY$ generated by $X_{i,0}^{+}$, $X_{i,0}^{-}$, $H_{i,0}$ ($i = 0,1,\ldots,n-1$) is isomorphic to $U(\affsl)$ (see \cite[Theorem~6.1]{MR2323534} for $n \geq 4$ and \cite[Theorem~6.9]{MR4014633} in general).
We identify $U(\affsl)$ with the subalgebra and use the same symbols for corresponding elements of both $U(\affsl)$ and $\affY$.

We use the symbol $\square$ for $\square(X) = X \otimes 1 + 1 \otimes X$.

\begin{thm}[\cite{MR2323534}, Section~6, pp.\ 462, \cite{MR3861718}, Definition~4.6, Theorem~4.9, Proposition~5.18, Section~7, Proposition~4.24]
There exists an algebra homomorphism $\Delta \colon \affY \to \compaffY^{\otimes 2}$ uniquely determined by
\[
	\Delta(X_{i,0}^{\pm}) = \square(X_{i,0}^{\pm}),\quad \Delta(H_{i,0}) = \square(H_{i,0}),
\]
\begin{align*}
	\Delta(X_{i,1}^{+}) &= \square(X_{i,1}^{+}) + \hbar \Bigg( X_{i}^{+} \otimes H_{i} - \sum_{\gamma \in \hat{\Delta}_{+}} \sum_{r=1}^{\mult \gamma} [X_{i}^{+}, X_{\gamma}^{(r)}] \otimes X_{-\gamma}^{(r)} \Bigg),\\
	\Delta(X_{i,1}^{-}) &= \square(X_{i,1}^{-}) + \hbar \Bigg( H_{i} \otimes X_{i}^{-} + \sum_{\gamma \in \hat{\Delta}_{+}} \sum_{r=1}^{\mult \gamma} X_{\gamma}^{(r)} \otimes [X_{i}^{-}, X_{-\gamma}^{(r)}] \Bigg),\\
	\Delta(H_{i,1}) &= \square(H_{i,1}) + \hbar \Bigg( H_{i} \otimes H_{i} - \sum_{\gamma \in \hat{\Delta}_{+}^{\mathrm{re}}} (\alpha_i,\gamma)\, X_{\gamma}^{(1)} \otimes X_{-\gamma}^{(1)} \Bigg) 
\end{align*}
for $i=0,1,\ldots,n-1$.
Moreover, it satisfies the coassociativity $(\Delta \otimes \id) \circ \Delta = (\id \otimes \Delta) \circ \Delta$.
\end{thm}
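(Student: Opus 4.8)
Since this statement is established in \cite{MR2323534} and \cite{MR3861718}, I only describe the strategy I would follow. Uniqueness is immediate from Proposition~\ref{prop:degree_one_generator}: the elements $X_{i,0}^{\pm}$ and $X_{i,1}^{+}$ generate $\affY$, so an algebra homomorphism out of $\affY$ is determined by its values on them, and the listed values of $\Delta$ on $H_{i,0}$, $X_{i,1}^{-}$ and $H_{i,1}$ are then forced. The substantive task is existence, for which the plan has two parts: first, check that the right-hand sides of the displayed formulas are well-defined elements of $\compaffY^{\otimes 2}$; second, produce a presentation of $\affY$ and verify that the assignment respects it.

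For the first part, the degree-$0$ subalgebra of $\affY$ is $U(\affsl)$, on which $\Delta$ is nothing but the cocommutative coproduct $\square$ of an enveloping algebra, so no convergence question arises there. The only issue is the sum over $\gamma \in \hat{\Delta}_{+}$. In the grading of $\affY$ with $\deg X_{0,r}^{\pm} = \pm 1$, the root vector $X_{-\gamma}^{(r)}$ attached to a positive affine root $\gamma$ has degree equal to minus the $\alpha_0$-coefficient (the level) of $\gamma$, which tends to $-\infty$ as $\gamma$ runs through the real roots and the imaginary roots of increasing level, while the paired factor $[X_i^{+},X_\gamma^{(r)}]$ then has degree tending to $+\infty$; each summand therefore has fixed total degree, and its summands are spread to $\pm\infty$ in the two tensor legs in exactly the manner that the degree-wise completed tensor product $\compaffY^{\otimes 2}$ accommodates. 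The same observation is what legitimizes the subsequent rearrangements of these sums, a point I would keep flagging.

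For the second part, rather than verifying the full Drinfeld presentation (which would force a cumbersome recursive definition of $\Delta$ on all $X_{i,r}^{\pm}$, $H_{i,r}$), I would use the minimalistic presentation of $\affY$ by the horizontal subalgebra $U(\affsl)$ together with one extra degree-one generator, say $X_{0,1}^{+}$, subject to a short list of relations; this presentation is available from \cite{MR2323534} for $n \geq 4$ and from \cite{MR4014633} for all $n \geq 3$. On $U(\affsl)$ there is nothing to check. Writing $\Omega$ for the canonical (Casimir) tensor of $\affsl$, namely the completed sum $\sum_{\gamma}\sum_{r}\bigl(X_\gamma^{(r)}\otimes X_{-\gamma}^{(r)} + X_{-\gamma}^{(r)}\otimes X_\gamma^{(r)}\bigr)$ plus its Cartan part, the extra term in $\Delta(X_{i,1}^{+})$ is, up to normalization, the ``lower-triangular-plus-Cartan'' part of $\hbar\,[\,X_i^{+}\otimes 1,\,\Omega\,]$; consequently the compatibility of the extra generator with the $U(\affsl)$-action, together with the degenerate cases of the ``level-raising'' relations, follows from the $\square(U(\affsl))$-invariance of $\Omega$. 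The genuinely hard step is the Serre-type (cubic) relation for the extra generator: after expanding $\Delta$ on both sides one must show that the root-sum contributions cancel, which relies on a few quadratic and cubic identities satisfied by $\Omega$ and by its brackets with root vectors inside $U(\affsl)^{\otimes 2}$. I would organize it as in \cite{MR3861718}: reduce to the rank-one and rank-two subalgebras, treat the affine node $0$ separately, and track carefully the shift $\tfrac{n}{2}\hbar + \ve$ occurring in the two relations linking nodes $0$ and $n-1$. Essentially all of the effort goes here, and I expect this to be by far the main obstacle.

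Finally, coassociativity $(\Delta\otimes\id)\circ\Delta = (\id\otimes\Delta)\circ\Delta$ is checked on generators: on $U(\affsl)$ it is the coassociativity of $\square$, and on $X_{i,1}^{+}$ both sides equal the three-fold analogue of $\square(X_{i,1}^{+})$ plus root sums spread over the three tensor legs, the equality reducing to the standard compatibilities of $\Omega$ with $\square$ (the identities expressing $(\square\otimes\id)\Omega$ and $(\id\otimes\square)\Omega$ in terms of $\Omega_{12}$, $\Omega_{13}$, $\Omega_{23}$). Everything outside the cubic relation is bookkeeping, subject to the standing proviso that all rearrangements of infinite sums be carried out inside $\compaffY^{\otimes 2}$.
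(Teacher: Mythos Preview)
Your proposal is appropriate: the paper does not give its own proof of this theorem, but cites \cite{MR2323534} and \cite{MR3861718} for it, adding only a remark on how its formulation differs (the completion used, and the opposite ordering of the tensor factors). Your sketch accurately captures the strategy of those references, including the use of the minimalistic presentation and the role of the Casimir tensor, so there is nothing to compare against in the paper itself.
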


We remark that the formulation is different from one in $\cite{MR3861718}$ for two points.
First, in $\cite{MR3861718}$, a completion is defined for Yangians associated with symmetrizable Kac-Moody Lie algebras while our completion is specific to affine type.
We need it as we will consider the composition of the coproduct and the evaluation map, and compare its image with the rectangular $W$-algebra.
Second, the coproduct in this paper is opposite to that of \cite{MR2323534} and \cite{MR3861718}.
Namely the order of the tensor product is reversed. 

An explicit form of $\Delta$ is derived by a straightforward computation as follows.

\begin{prop}\label{prop:explicit_coproduct}
We have
\begin{align*}
	&\Delta(X_{0,1}^{+}) = \square(X_{0,1}^{+}) \\
	&\quad + \hbar \Bigg( X_{0}^{+} \otimes c + \sum_{m \geq 0} \sum_{a=1}^{n} \Big( E_{a,1}(m+1) \otimes E_{n,a}(-m) - E_{n,a}(m+1) \otimes E_{a,1}(-m) \Big)\Bigg),\\
	&\Delta(X_{0,1}^{-}) = \square(X_{0,1}^{-}) \\
	&\quad + \hbar \Bigg( c \otimes X_{0}^{-} + \sum_{m \geq 0} \sum_{a=1}^{n} \Big( E_{a,n}(m) \otimes E_{1,a}(-m-1) - E_{1,a}(m) \otimes E_{a,n}(-m-1) \Big)\Bigg)
\end{align*}
and
\begin{align*}
	&\Delta(X_{i,1}^{+}) = \square(X_{i,1}^{+})\\
	&\quad + \hbar \sum_{m \geq 0} \Bigg( \sum_{a=1}^{i} E_{a,i+1}(m) \otimes E_{i,a}(-m) + \sum_{a=i+1}^{n} E_{a,i+1}(m+1) \otimes E_{i,a}(-m-1) \\
	&\qquad\qquad\qquad - \sum_{a=1}^{i} E_{i,a}(m+1) \otimes E_{a,i+1}(-m-1) - \sum_{a=i+1}^{n} E_{i,a}(m) \otimes E_{a,i+1}(-m) \Bigg),\\
	&\Delta(X_{i,1}^{-}) = \square(X_{i,1}^{-})\\
	&\quad + \hbar \sum_{m \geq 0} \Bigg( \sum_{a=1}^{i} E_{a,i}(m) \otimes E_{i+1,a}(-m) + \sum_{a=i+1}^{n} E_{a,i}(m+1) \otimes E_{i+1,a}(-m-1) \\
	&\qquad\qquad\qquad - \sum_{a=1}^{i} E_{i+1,a}(m+1) \otimes E_{a,i}(-m-1) - \sum_{a=i+1}^{n} E_{i+1,a}(m) \otimes E_{a,i}(-m) \Bigg)
\end{align*}
for $i=1,\ldots,n-1$.
\end{prop}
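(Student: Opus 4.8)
The plan is to derive the explicit formulas directly from the abstract definition of $\Delta$ given in the preceding theorem, by computing the sum over positive roots $\gamma \in \hat{\Delta}_{+}$ appearing in $\Delta(X_{i,1}^{+})$ and $\Delta(X_{i,1}^{-})$. Recall that the positive roots of $\affsl$ are $\gamma = \beta + k\delta$ with $\beta$ a root of $\fraksl_n$ (or $0$) and $k \geq 0$ (with $k \geq 1$ when $\beta$ is a negative root or $\beta = 0$), and that $\delta$ is the imaginary root of multiplicity $n-1$. The root vectors are $E_{p,q}(k)$ for $p \neq q$ together with an orthonormal basis of the Cartan part in each imaginary root space; the normalization $(X_\gamma^{(r)}, X_{-\gamma}^{(s)}) = \delta_{rs}$ matches the trace form used to define $\affsl$. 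So the main task is a bookkeeping computation: expand $\sum_{\gamma}\sum_r [X_i^+, X_\gamma^{(r)}] \otimes X_{-\gamma}^{(r)}$ over all these root vectors.

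First I would treat the case $i = 0$, where $X_0^+ = E_{n,1}(1)$. For each real root vector $E_{p,q}(k)$ with $k \geq 0$ (and $k \geq 1$ if $q < p$ in the appropriate sense), I compute $[E_{n,1}(1), E_{p,q}(k)] = \delta_{1p} E_{n,q}(k+1) - \delta_{qn} E_{p,1}(k+1)$, paired against the dual root vector $E_{q,p}(-k)$. Summing over all such pairs and the imaginary-root contributions (which cancel against the Cartan part since $[E_{n,1}(1), -]$ applied to a diagonal element produces only off-diagonal terms already accounted for, modulo the central term $X_0^+ \otimes c$ coming from the $X_i^+ \otimes H_i$ piece with $H_0 = E_{n,n} - E_{1,1} + c$) gives the stated formula. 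The shift of the index $m$ by one in some of the sums is exactly the effect of the "$+1$" in $E_{n,1}(1)$, i.e. the affine node being attached at level $1$; one must be careful that the range of $k$ for which $E_{p,q}(k)$ is a positive root depends on whether $p<q$ or $p>q$, which is what produces the split between $\sum_{a=1}^{i}$ and $\sum_{a=i+1}^{n}$ in the general formula.

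Next I would do the generic case $i = 1,\ldots,n-1$, where $X_i^+ = E_{i,i+1}$ is a finite-type root vector acting at level $0$. Now $[E_{i,i+1}, E_{p,q}(k)] = \delta_{i+1,p} E_{i,q}(k) - \delta_{q,i} E_{p,i+1}(k)$, paired with $E_{q,p}(-k)$. The key subtlety is the set of positive roots: $E_{p,q}(k)$ is a positive root vector for $k \geq 1$ always, and for $k = 0$ only when $p < q$. When $p = q + $ something the level-$0$ root $E_{q,p}$ is negative, so its "dual with respect to $\gamma$" appears as the $k=0$ term of $E_{p,q}$ in the $X_{-\gamma}^{(r)}$ slot — this is the source of the asymmetric index shifts ($m$ versus $m+1$) between the terms with $a \leq i$ and $a \geq i+1$. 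The imaginary-root/Cartan contribution again combines with the $X_i^+ \otimes H_i = E_{i,i+1} \otimes (E_{i,i}-E_{i+1,i+1})$ term; together these assemble into the $a = i$ and $a = i+1$ boundary terms of the displayed sums. The formula for $\Delta(X_{i,1}^-)$ is obtained by the analogous computation with $X_i^- = E_{i+1,i}$ and the $H_i \otimes X_i^-$ / $X_\gamma^{(r)} \otimes [X_i^-, X_{-\gamma}^{(r)}]$ terms, or alternatively by applying the obvious $\pm$-swapping antiautomorphism.

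The main obstacle I anticipate is purely combinatorial: correctly enumerating which $E_{p,q}(k)$ constitute a basis of $\hat{\Delta}_+$ (including the level shifts at the boundary and the multiplicity-$(n-1)$ imaginary root spaces), and verifying that the Cartan/imaginary contributions recombine cleanly with the $X_i^\pm \otimes H_i$ (resp. $H_i \otimes X_i^\mp$) terms so that only the off-diagonal matrix units survive in the final expression, with no leftover diagonal or central terms beyond the explicit $X_0^\pm \otimes c$ (resp. $c \otimes X_0^\pm$). Once the indexing is pinned down, each commutator is elementary and the sums telescope into the stated closed forms; the verification that the resulting map agrees with the abstractly-defined $\Delta$ is then immediate since $\Delta$ is uniquely determined by its values on the generators.
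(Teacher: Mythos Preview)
Your proposal is correct and matches the paper's approach: the paper does not write out a proof at all, merely stating that ``an explicit form of $\Delta$ is derived by a straightforward computation,'' and the computation you outline---enumerating real and imaginary positive roots of $\affsl$, computing each commutator $[X_i^{\pm}, X_{\gamma}^{(r)}]$, and absorbing the Cartan/imaginary contributions into the $X_i^{+}\otimes H_i$ (resp.\ $H_i\otimes X_i^{-}$) term---is exactly that computation. One small cosmetic point: the diagonal terms such as $E_{1,1}(m{+}1)\otimes E_{n,1}(-m)$ and $-E_{n,n}(m{+}1)\otimes E_{n,1}(-m)$ individually use $\gl_n$ notation, but they always occur in the combinations $(E_{1,1}-E_{n,n})(\cdot)$ etc., so the resulting element indeed lies in $U(\affsl)^{\otimes 2}\subset \compaffY^{\otimes 2}$, as it must.
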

We do not use an explicit form of $\Delta(H_{i,1})$.

\section{Evaluation map}\label{section:Evaluation_map}

The evaluation map for $\affY$ was discovered by Guay in \cite{MR2323534}.
It requires a certain condition among the parameters as remarked in \cite{MR4207398}.
We state it in the form for the presentation of this paper.

\begin{rem}\label{rem:error}
The published version of \cite{MR4207398} has an error and the correction is available.
See also the arXiv version of \cite{MR4207398}, or Section~\ref{section:Commuting_elements} and Appendix~\ref{section:proof_of_evaluation_map} of this paper. 
\end{rem}

Recall that we set $\alpha = k + (l-1)n$ and defined $\hat{\gl}_{n}^{\alpha}$ as the direct sum of $\hat{\mathfrak{sl}}_{n}$ at level $\alpha$ and the Heisenberg Lie algebra at level $\alpha+n$ in Section~\ref{section:Affine_Lie_algebras}.
Although Theorem~\ref{thm:evaluation} below holds for an arbitrary $\alpha$ satisfying $\alpha + n = -\ve/\hbar$, we use it with $\alpha = k + (l-1)n$ for our purpose.

\begin{thm}[\cite{MR2323534}, Section~6, pp.\ 462--463, \cite{MR4207398}, Theorem~3.8]\label{thm:evaluation}
Assume $\alpha + n = -\ve/\hbar$.
Then there exists an algebra homomorphism $\ev \colon \affY \to U(\hat{\gl}_{n}^{\alpha})_{{\mathrm{comp}}}$ uniquely determined by 
\[
	\ev(X_{i,0}^{\pm})=X_{i}^{\pm}, \quad \ev(H_{i,0})=H_{i}, \quad \ev(c) = \alpha, 
\]
\begin{align*}
		\ev(X_{0,1}^{+}) &= \hbar \Bigg( \alpha X_{0}^{+} + \sum_{m \geq 0} \sum_{a=1}^{n} E_{n,a}(-m) E_{a,1}(m+1) \Bigg),\\
		\ev(X_{0,1}^{-}) &= \hbar \Bigg( \alpha X_{0}^{-} + \sum_{m \geq 0} \sum_{a=1}^{n} E_{1,a}(-m-1) E_{a,n}(m) \Bigg),\\
		\ev(H_{0,1}) &= \hbar \Bigg( \alpha H_{0} - E_{n,n} (E_{1,1}-\alpha) + \sum_{m \geq 0} \sum_{a=1}^{n} \Big( E_{n,a}(-m) E_{a,n}(m) - E_{1,a}(-m-1) E_{a,1}(m+1) \Big)\Bigg)
\end{align*}
and
\begin{align*}
		\ev(X_{i,1}^{+}) &= \hbar \Bigg( -\frac{i}{2} X_{i}^{+} + \sum_{m \geq 0} \Bigg( \sum_{a=1}^{i} E_{i,a}(-m) E_{a,i+1}(m) + \sum_{a=i+1}^{n} E_{i,a}(-m-1) E_{a,i+1}(m+1) \Bigg) \Bigg),\\
		\ev(X_{i,1}^{-}) &= \hbar \Bigg( -\frac{i}{2} X_{i}^{-} + \sum_{m \geq 0} \Bigg( \sum_{a=1}^{i} E_{i+1,a}(-m) E_{a,i}(m) + \sum_{a=i+1}^{n} E_{i+1,a}(-m-1) E_{a,i}(m+1) \Bigg) \Bigg),\\
		\ev(H_{i,1}) &= \hbar \Bigg( -\frac{i}{2} H_{i} - E_{i,i} E_{i+1,i+1} + \sum_{m \geq 0} \Bigg( \sum_{a=1}^{i} E_{i,a}(-m) E_{a,i}(m) + \sum_{a=i+1}^{n} E_{i,a}(-m-1) E_{a,i}(m+1) \\
		&- \sum_{a=1}^{i} E_{i+1,a}(-m) E_{a,i+1}(m) - \displaystyle\sum_{a=i+1}^{n} E_{i+1,a}(-m-1) E_{a,i+1}(m+1) \Bigg) \Bigg)
\end{align*}
for $i=1,\ldots,n-1$.
\end{thm}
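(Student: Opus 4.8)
The plan is to treat uniqueness and existence separately. Uniqueness is immediate: by Proposition~\ref{prop:degree_one_generator} the algebra $\affY$ is generated by $X_{i,0}^{+}$, $X_{i,0}^{-}$, $X_{i,1}^{+}$ ($i=0,1,\ldots,n-1$), and the stated formulas prescribe their images, so at most one such $\ev$ can exist. For existence, the starting point is that $\hat{\fraksl}_{n}$ at level $\alpha$ sits inside $\hat{\gl}_{n}^{\alpha}$ as a direct summand, so there is an algebra homomorphism $U(\affsl) \to U(\hat{\gl}_{n}^{\alpha})_{\mathrm{comp}}$ sending $X_{i}^{\pm}\mapsto X_{i}^{\pm}$, $H_{i}\mapsto H_{i}$, $c\mapsto\alpha$. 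Since the subalgebra of $\affY$ generated by the $X_{i,0}^{\pm}$ and $H_{i,0}$ is isomorphic to $U(\affsl)$, and the prescribed values of $\ev$ on these generators coincide with the images under the above map, all relations of $\affY$ among the degree-zero generators $X_{i,0}^{\pm}$, $H_{i,0}$ are automatically satisfied by the proposed images.

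It then remains to check the relations that involve $X_{i,1}^{\pm}$ or $H_{i,1}$. To make this a finite task I would invoke a minimal presentation of $\affY$: it is known (cf.\ \cite{MR3861718}, \cite{MR4014633}) that $\affY$ is presented by $X_{i,0}^{\pm}$, $H_{i,0}$, $X_{i,1}^{\pm}$, $H_{i,1}$ ($i\in\bbZ/n\bbZ$) subject to the defining relations of $U(\affsl)$ on the degree-zero generators together with a finite list of relations, quadratic in the degree-one generators, and the lowest instances of the Serre relations. This reduces existence to verifying finitely many identities for the proposed images inside $U(\hat{\gl}_{n}^{\alpha})_{\mathrm{comp}}$: for all $i,j$ one needs $[H_{i,0},\ev(X_{j,1}^{\pm})]=\pm a_{ij}\ev(X_{j,1}^{\pm})$; $[\ev(X_{i,1}^{+}),X_{j,0}^{-}]=\delta_{ij}\ev(H_{i,1})=[X_{i,0}^{+},\ev(X_{j,1}^{-})]$; $[\ev(H_{i,1}),H_{j,0}]=[\ev(H_{i,1}),\ev(H_{j,1})]=0$; the deformed current relations expressing $[\ev(H_{i,1}),X_{j,0}^{\pm}]$ and $[\ev(X_{i,1}^{\pm}),X_{j,0}^{\pm}]$ through the prescribed $\{\,,\,\}$-terms (with the node-$0$ corrections); and the mixed Serre relations. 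Each of these becomes a direct computation: substitute the formulas, expand commutators via $[E_{i,j}(m),E_{p,q}(m')]=\delta_{jp}E_{i,q}(m+m')-\delta_{iq}E_{p,j}(m+m')+m\delta_{m+m',0}\tilde{\kappa}(E_{i,j},E_{p,q})\mathbf{1}'$, observe that the semi-infinite sums $\sum_{m\geq 0}$ make sense in the degree-wise completion, and check that after reordering almost every term telescopes, leaving precisely the required right-hand side.

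I expect the genuine obstacle to be the relations attached to the affine node $0$, those indexed by $(i,j)=(0,n-1)$ and $(n-1,0)$, which carry the additional term $\pm(\tfrac{n}{2}\hbar+\ve)[\,\cdot\,]$ beyond the $\{\,,\,\}$-term. Matching the coefficient of this commutator term against the commutator of the proposed images is exactly what forces the parameters to satisfy $\alpha+n=-\ve/\hbar$; this is the step carried out incorrectly in the published version of \cite{MR4207398} (see Remark~\ref{rem:error}), so it must be redone carefully under the corrected hypothesis. A second, more mechanical difficulty is the bookkeeping near $t^{0}$: when a semi-infinite sum is reordered, the central term of $\hat{\gl}_{n}$ together with the ``anomalous'' pieces of the formulas --- the constant $\alpha X_{0}^{\pm}$, the term $-E_{n,n}(E_{1,1}-\alpha)$, and the mode shifts $m\mapsto m+1$ at the affine node --- contribute finitely many leftover terms that must be tracked exactly, since dropping or double-counting one of them changes the answer. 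Granting these, the remaining identities (in particular the Serre relations and $[\ev(H_{i,1}),\ev(H_{j,1})]=0$) follow by similar but longer manipulations of the same kind; the bulk of this routine computation is what I would relegate to an appendix, as indicated in Remark~\ref{rem:error}.
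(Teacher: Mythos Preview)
Your outline is essentially the approach of the cited proof in \cite{MR4207398}, and the paper does not give an independent proof: Theorem~\ref{thm:evaluation} is stated as a result from \cite{MR2323534,MR4207398}, with only partial supporting computations supplied here. So your plan is correct.

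One point of emphasis differs from the paper's treatment. You locate the ``genuine obstacle'' in the node-$0$ relations carrying the $(\tfrac{n}{2}\hbar+\ve)$ term and relegate $[\ev(H_{i,1}),\ev(H_{j,1})]=0$ to ``similar but longer manipulations.'' The paper takes the opposite view: Section~\ref{section:Commuting_elements} explicitly says that verifying $[\ev(H_{i,1}),\ev(H_{j,1})]=0$ is ``the main part of computations'' in \cite{MR4207398}, and Appendix~\ref{section:proof_of_evaluation_map} is devoted to deriving the identity (\ref{eq:AB}) for $[A_i+B_i,A_j+B_j]$, whose $l=1$ specialization yields exactly this commutativity. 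The node-$0$ relations do pin down the condition $\alpha+n=-\ve/\hbar$, as you say, but that is a short coefficient match; the genuinely lengthy step is the commutativity of the images of the $H_{i,1}$, and the paper's contribution here is to package that computation via the elements $A_i,B_i$ and the four commutators $[A_i,A_j]$, $[A_i,B_j]$, $[B_i,A_j]$, $[B_i,B_j]$ computed in Appendix~\ref{section:proof_of_evaluation_map}.
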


\section{Affine Yangians and rectangular $W$-algebras}\label{section:Affine_Yangians_and_rectangular_W-algebras}

For each $\beta \in \bbC$, we define an algebra automorphism $\eta_{\beta}$ of $U(\hat{\gl}_{n}^{\alpha})$ by
\[
	E_{i,j}(m) \mapsto E_{i,j}(m) + \delta_{m,0}\delta_{i,j}\beta.
\]
We define an algebra automorphism $\eta^{(l)}$ of $U(\hat{\gl}_{n}^{\alpha})_{{\mathrm{comp}}}^{\otimes l}$ by
\[
	\eta^{(l)} = \eta_{(l-1)\alpha} \otimes \eta_{(l-2)\alpha} \otimes \cdots \otimes \eta_{\alpha} \otimes \id.
\]

We denote by $\Delta^{l-1}$ the $(l-1)$ times iterated composition of $\Delta$, which is a homomorphism from $\compaffY$ to $\compaffY^{\otimes l}$.
The map $\ev^{\otimes l} \colon \affY^{\otimes l} \to U(\hat{\gl}_{n}^{\alpha})_{{\mathrm{comp}}}^{\otimes l}$ is extended to an algebra homomorphism from $\compaffY^{\otimes l}$ to $U(\hat{\gl}_{n}^{\alpha})_{{\mathrm{comp}}}^{\otimes l}$.
We use the same letter $\ev^{\otimes l}$ for this map. 

\begin{dfn}\label{dfn:map}
Assume $k+N=\alpha+n=-\ve/\hbar$.
We set
\[
	\Phi_{l} = \eta^{(l)} \circ \ev^{\otimes l} \circ \Delta^{l-1},
\]
which is an algebra homomorphism from $\affY_{\mathrm{comp}}$ to $U(\hat{\gl}_{n}^{\alpha})_{{\mathrm{comp}}}^{\otimes l}$.
\end{dfn}
Let us state the main theorem.
\begin{thm}\label{thm:main}
The image of $\Phi_{l}$ is contained in $\mathfrak{U}(\rect)$.
That is, the map $\Phi_{l} = \eta^{(l)} \circ \ev^{\otimes l} \circ \Delta^{l-1}$ gives an algebra homomorphism
\[
	\Phi_{l} \colon \affY_{\mathrm{comp}} \to \mathfrak{U}(\rect).
\]
Moreover, it is surjective if $\alpha \neq 0$.
\end{thm}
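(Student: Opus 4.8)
The plan is to reduce the claim about $\Phi_l$ to a computation with the generators $W_{i,j}^{(r)}$ of $\rect$, using the coassociativity of $\Delta$ to set up an induction on $l$. First I would treat the base case $l=1$: here $\Phi_1 = \eta^{(1)} \circ \ev = \ev$ (since $\eta^{(1)}=\id$), and $\rect$ for $l=1$ is just $V^{\tilde\kappa}(\gl_n)$, so $\mathfrak{U}(\rect) = U(\hat\gl_n^\alpha)_{\mathrm{comp}}$ and there is nothing to prove. The surjectivity in the base case also reduces to the statement that $\ev$ is surjective, which follows because $\ev(X_{i,0}^\pm)=X_i^\pm$ and $\ev(H_{i,0})=H_i$ already generate $\fraksl_n$, while $\ev(X_{0,1}^+)$ together with these recovers a generator with a nontrivial $t$-component (provided the coefficient $\hbar\alpha \neq 0$, i.e. $\alpha\neq 0$), so the full loop algebra and the Heisenberg part are reached; the completion is then generated topologically.

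**The inductive step.**
Write $\Delta^{l-1} = (\Delta^{l_1-1}\otimes\Delta^{l_2-1})\circ\Delta$ for any $l=l_1+l_2$ with $l_i\geq 1$, using coassociativity. Applying $\ev^{\otimes l}$ and the twist $\eta^{(l)}$, I would show that $\Phi_l$ factors as
\[
	\Phi_l = \tilde\Delta_{l_1,l_2} \circ \Phi_l' \quad\text{``morally''},
\]
more precisely that $\eta^{(l)}\circ\ev^{\otimes l}\circ(\Delta^{l_1-1}\otimes\Delta^{l_2-1})$, restricted along $\Delta$, matches $(\Phi_{l_1}\otimes\Phi_{l_2})$ up to the bookkeeping built into the shifts $\eta_{(l-s)\alpha}$. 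The key point is that the automorphism $\eta^{(l)}$ has been chosen precisely so that, after splitting the tensor factors $1,\dots,l_1$ from $l_1+1,\dots,l$, it decomposes as $(\eta_{l_2\alpha}\otimes\cdots)\circ(\eta^{(l_1)}\otimes\eta^{(l_2)})$ where the extra uniform shift by $l_2\alpha$ on the first block is exactly the discrepancy between the generators of $\rect$ and the external tensor product $\Delta_{l_1,l_2}$ — compare the formula $\Delta_{l_1,l_2}(W_{i,j}^{(2)}(m))$ with its $-(m+1)l_2\alpha W_{i,j}^{(1)}(m)\otimes 1$ term against the shift terms in $W_{i,j}^{(2)}(m)$ from Example~\ref{exam:generators}. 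Granting this, induction gives that $\Phi_{l_1}\otimes\Phi_{l_2}$ lands in $\mathfrak{U}(\mathcal{W}^{\kappa_1}(\gl_{N_1},f_1))\otimes_{\mathrm{comp}}\mathfrak{U}(\mathcal{W}^{\kappa_2}(\gl_{N_2},f_2))$, and composing with $\tilde\Delta_{l_1,l_2}$ (which lands in $\mathfrak{U}(\rect)$ by construction) shows the image of $\Phi_l$ lies in $\mathfrak{U}(\rect)$, simultaneously proving the commutative diagram of Theorem~\ref{thm:introduction_main}.

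**The concrete check and surjectivity.**
Since $\affY$ is generated by $X_{i,0}^\pm$ and $X_{i,1}^+$ (Proposition~\ref{prop:degree_one_generator}), after the completion it suffices to verify membership in $\mathfrak{U}(\rect)$ for the images of these finitely many generators. Using Proposition~\ref{prop:explicit_coproduct} and Theorem~\ref{thm:evaluation}, I would compute $\Phi_l(X_{i,0}^\pm)$ and check it equals $\sum_{s=1}^l E_i^{\pm,[s]}(0)$-type sums, recognizable as combinations of $W_{i,j}^{(1)}(m)$, hence in $\mathfrak{U}(\rect)$. The genuinely informative computation is $\Phi_l(X_{i,1}^+)$: after iterating $\Delta$ and evaluating, the quadratic terms $\sum_r\sum_{s_1<s_2} E^{[s_1]}(m+r)E^{[s_2]}(-r)$ together with the linear correction terms must assemble — after the $\eta^{(l)}$ twist — into precisely the expression for $W_{i,j}^{(2)}(m)$ in Example~\ref{exam:generators} (up to lower generators), which is why the hypothesis $\alpha+n=-\ve/\hbar$ is forced. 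I expect this matching of the quadratic and the shift terms to be the main obstacle: it is where the precise normalizations of $\Delta$, of $\ev$ (including the corrected version flagged in Remark~\ref{rem:error}), and of $\eta^{(l)}$ all have to conspire, and it is most cleanly organized by the inductive factorization above rather than by brute-force expansion of $\Delta^{l-1}$. For surjectivity, once the image is known to contain (topological generators of the) $W_{i,j}^{(1)}(m)$ and $W_{i,j}^{(2)}(m)$, Proposition~\ref{prop:generators} — valid under $n\geq 2$ and $\alpha\neq 0$ — shows these topologically generate $\mathfrak{U}(\rect)$, completing the proof; the need for $\alpha\neq 0$ here is the same as in the base case.
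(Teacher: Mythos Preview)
Your inductive strategy has a genuine gap. The factorization
\[
(\eta_{-l_2\alpha}^{\otimes l_1}\otimes\id)\circ\Phi_l=(\Phi_{l_1}\otimes\Phi_{l_2})\circ\Delta
\]
is correct as an identity of maps into the ambient algebra $U(\hat\gl_n^\alpha)_{\mathrm{comp}}^{\otimes l}$, but it only tells you, by induction, that the image of $\Phi_l$ lies in $(\eta_{l_2\alpha}^{\otimes l_1}\otimes\id)\bigl(\mathfrak{U}(\mathcal{W}^{\kappa_1}(\gl_{N_1},f_1))\otimes_{\mathrm{comp}}\mathfrak{U}(\mathcal{W}^{\kappa_2}(\gl_{N_2},f_2))\bigr)$. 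This is a \emph{strictly larger} subalgebra than $\mathfrak{U}(\rect)$ (already for $l=2$, $l_1=l_2=1$, it is the whole ambient algebra). You have the direction of $\tilde\Delta_{l_1,l_2}$ reversed: it is an injection \emph{from} $\mathfrak{U}(\rect)$ \emph{into} the tensor product, not a map landing in $\mathfrak{U}(\rect)$. Consequently the commutative diagram is a \emph{consequence} of knowing the image lies in $\mathfrak{U}(\rect)$, not a route to proving it.

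The paper therefore does not argue by induction but by the direct computation you describe in your third paragraph: it applies $\ev^{\otimes l}\circ\Delta^{l-1}$ and then $\eta^{(l)}$ to $X_{i,0}^\pm$ and $X_{i,1}^\pm$ and identifies the result explicitly with a combination of $W^{(1)}$'s and a single $W^{(2)}$ (Proposition~\ref{prop:image}). That brute-force expansion of $\Delta^{l-1}$ is unavoidable here; the inductive factorization cannot replace it. For surjectivity, note also that Proposition~\ref{prop:image} does not hand you all $W_{i,j}^{(2)}(m)$ but only the specific elements $W_{n,1}^{(2)}(1)$, $W_{1,n}^{(2)}(-1)$, $W_{i,i+1}^{(2)}(0)$, $W_{i+1,i}^{(2)}(0)$ (modulo $W^{(1)}$-terms), so invoking Proposition~\ref{prop:generators} is not immediate; the paper refines this in Appendix~\ref{section:surjectivity} (Proposition~\ref{prop:generators2}) and separately handles the Heisenberg generators $W_{i,i}^{(1)}(m)$ via the images of $H_{i,1}$.
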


\begin{proof}
Since we have
\begin{equation}
		\Phi_{l}(X_{i,0}^{+}) = \begin{cases} 
			W_{n,1}^{(1)}(1) & \text{ if $i=0$},\\
			W_{i,i+1}^{(1)}(0) & \text{ if $i \neq 0$},
		\end{cases}\quad
		\Phi_{l}(X_{i,0}^{-}) = \begin{cases}
			W_{1,n}^{(1)}(-1) & \text{ if $i=0$},\\
			W_{i+1,i}^{(1)}(0) & \text{ if $i \neq 0$},
		\end{cases}\label{eq:image_of_degree_zero}
\end{equation}
the image of $\{ X_{i,0}^{\pm} \mid 0 \leq i \leq n-1 \}$ is contained in $\mathfrak{U}(\rect)$.
Hence the following formulas conclude that the image of $\Phi_{l}$ is contained in $\mathfrak{U}(\rect)$ by Proposition~\ref{prop:degree_one_generator}. 
\begin{prop}\label{prop:image}
We have
\begin{equation*}
	\begin{split}
		\Phi_{l}(X_{0,1}^{+}) &= (-\hbar) \Bigg( W_{n,1}^{(2)}(1) - \alpha W_{n,1}^{(1)}(1) - \sum_{m \geq 0} \sum_{a=1}^{n} W_{n,a}^{(1)}(-m) W_{a,1}^{(1)}(m+1)\Bigg),\\
		\Phi_{l}(X_{0,1}^{-}) &= (-\hbar) \Bigg( W_{1,n}^{(2)}(-1) - l \alpha W_{1,n}^{(1)}(-1) - \sum_{m \geq 0} \sum_{a=1}^{n} W_{1,a}^{(1)}(-m-1) W_{a,n}^{(1)}(m) \Bigg)
	\end{split}
\end{equation*}
and
\begin{equation*}
	\begin{split}
		\Phi_{l}(X_{i,1}^{+}) &= (-\hbar) \Bigg( W_{i,i+1}^{(2)}(0) + \dfrac{i}{2} W_{i,i+1}^{(1)}(0) \\
		&\qquad - \sum_{m \geq 0} \Bigg( \sum_{a=1}^{i} W_{i,a}^{(1)}(-m) W_{a,i+1}^{(1)}(m) + \sum_{a=i+1}^{n} W_{i,a}^{(1)}(-m-1) W_{a,i+1}^{(1)}(m+1) \Bigg)\Bigg),\\
		\Phi_{l}(X_{i,1}^{-}) &= (-\hbar) \Bigg( W_{i+1,i}^{(2)}(0) + \dfrac{i}{2} W_{i+1,i}^{(1)}(0) \\
		&\qquad - \sum_{m \geq 0} \Bigg( \sum_{a=1}^{i} W_{i+1,a}^{(1)}(-m) W_{a,i}^{(1)}(m) + \sum_{a=i+1}^{n} W_{i+1,a}^{(1)}(-m-1) W_{a,i}^{(1)}(m+1) \Bigg)\Bigg)
	\end{split}
\end{equation*}
for $i=1,\ldots,n-1$.
\end{prop}

\begin{proof}[Proof of Proposition~\ref{prop:image}]
We show the assertions only for $i = 0$.
Note that we have an equality $\square^{l-1}(X) = \sum_{s=1}^{l} X^{[s]}$ in $U(\hat{\gl}_{n}^{\alpha})_{{\mathrm{comp}}}^{\otimes l}$.

We show the assertion for $X_{0,1}^{+}$.
We have
\begin{equation*}
	\begin{split}
		&\ev^{\otimes l} \circ \Delta^{l-1}(X_{0,1}^{+}) = \square^{l-1}(\ev(X_{0,1}^{+})) \\
		&\ +\hbar \sum_{1 \leq s_1 < s_2 \leq l} \ev^{\otimes l} \Bigg( E_{n,1}^{[s_1]}(1) (c^{[s_2]}) + \sum_{m \geq 0} \sum_{a=1}^{n} \Big( E_{a,1}^{[s_1]}(m+1) E_{n,a}^{[s_2]}(-m) - E_{n,a}^{[s_1]}(m+1) E_{a,1}^{[s_2]}(-m) \Big)\Bigg)\\
		&= (-\hbar) \Bigg( - \alpha \sum_{s=1}^{l} E_{n,1}^{[s]}(1) - \sum_{m \geq 0} \sum_{s=1}^{l} \sum_{a=1}^{n} E_{n,a}^{[s]}(-m) E_{a,1}^{[s]}(m+1) - \alpha \sum_{s=1}^{l} (l-s) E_{n,1}^{[s]}(1) \\
		&\qquad\qquad\qquad\qquad - \sum_{m \geq 0} \sum_{1 \leq s_1 < s_2 \leq l} \sum_{a=1}^{n} \Big( E_{a,1}^{[s_1]}(m+1) E_{n,a}^{[s_2]}(-m) - E_{n,a}^{[s_1]}(m+1) E_{a,1}^{[s_2]}(-m) \Big)\Bigg).
	\end{split}
\end{equation*}
This is equal to
\begin{equation*}
	\begin{split}
		&(-\hbar) \Bigg( - \alpha \sum_{s=1}^{l} E_{n,1}^{[s]}(1) - \alpha \sum_{s=1}^{l} (l-s) E_{n,1}^{[s]}(1) + \sum_{m \in \bbZ} \sum_{1 \leq s_1 < s_2 \leq l} \sum_{a=1}^{n} E_{n,a}^{[s_1]}(m+1) E_{a,1}^{[s_2]}(-m) \\
		&\qquad\qquad - \sum_{m \geq 0} \sum_{a=1}^{n} \left( \sum_{s_1=1}^{l} E_{n,a}^{[s_1]}(-m) \right) \left( \sum_{s_2=1}^{l} E_{a,1}^{[s_2]}(m+1) \right) \Bigg).
	\end{split}
\end{equation*}
Then we obtain
\begin{equation}
	\begin{split}
		&\eta^{(l)} \circ \ev^{\otimes l} \circ \Delta^{l-1}(X_{0,1}^{+}) = \ev^{\otimes l} \circ \Delta^{l-1}(X_{0,1}^{+}) \\
		&\quad + (-\hbar) \Bigg( \sum_{1 \leq s_1 < s_2 \leq l} \Big( (l-s_2)\alpha E_{n,1}^{[s_1]}(1) + (l-s_1)\alpha E_{n,1}^{[s_2]}(1) \Big) - \sum_{s_1=1}^{l} (l-s_1)\alpha \sum_{s_2=1}^{l} E_{n,1}^{[s_2]}(1) \Bigg)\\
		&= (-\hbar) \Bigg( - \alpha \sum_{s=1}^{l} E_{n,1}^{[s]}(1) - \alpha \sum_{s=1}^{l} (l-s) E_{n,1}^{[s]}(1) + \sum_{m \in \bbZ} \sum_{a=1}^{n} \sum_{1 \leq s_1 < s_2 \leq l} E_{n,a}^{[s_1]}(m+1) E_{a,1}^{[s_2]}(-m) \\
		&\qquad\qquad - \sum_{m \geq 0} \sum_{a=1}^{n} \left( \sum_{s_1=1}^{l} E_{n,a}^{[s_1]}(-m) \right) \left( \sum_{s_2=1}^{l} E_{a,1}^{[s_2]}(m+1) \right) - \alpha \sum_{s=1}^{l} (l-s) E_{n,1}^{[s]}(1) \Bigg) \label{eq:Phi}.
	\end{split}
\end{equation}
Since we have
\begin{equation*}
	\begin{split}
		W_{i,j}^{(1)}(m) &= \sum_{s=1}^{l} E_{i,j}^{[s]}(m),\\
		W_{n,1}^{(2)}(1) &= \sum_{m \in \bbZ} \sum_{1 \leq s_1 < s_2 \leq l} \sum_{a=1}^{n} E_{n,a}^{[s_1]}(m+1) E_{a,1}^{[s_2]}(-m) - 2\alpha \sum_{s=1}^{l} (l-s) E_{n,1}^{[s]}(1)
	\end{split}
\end{equation*}
by Example~\ref{exam:generators}, we see that (\ref{eq:Phi}) is equal to 
\[
	(-\hbar) \Bigg( W_{n,1}^{(2)}(1) - \alpha W_{n,1}^{(1)}(1) - \sum_{m \geq 0} \sum_{a=1}^{n} W_{n,a}^{(1)}(-m) W_{a,1}^{(1)}(m+1)\Bigg).
\]

We show the assertion for $X_{0,1}^{-}$.
We have
\begin{equation*}
	\begin{split}
		&\ev^{\otimes l} \circ \Delta^{l-1}(X_{0,1}^{-}) = \square^{l-1}(\ev(X_{0,1}^{-})) \\
		&\ + \hbar \sum_{1 \leq s_1 < s_2 \leq l} \ev^{\otimes l} \Bigg( (c^{[s_1]}) E_{1,n}^{[s_2]}(-1) + \sum_{m \geq 0} \sum_{a=1}^{n} \Big( E_{a,n}^{[s_1]}(m) E_{1,a}^{[s_2]}(-m-1) - E_{1,a}^{[s_1]}(m) E_{a,n}^{[s_2]}(-m-1) \Big)\Bigg)\\
		&= (-\hbar) \Bigg( - \alpha \sum_{s=1}^{l} E_{1,n}^{[s]}(-1) - \sum_{m \geq 0} \sum_{s=1}^{l} \sum_{a=1}^{n} E_{1,a}^{[s]}(-m-1) E_{a,n}^{[s]}(m) - \alpha \sum_{s=1}^{l} (s-1) E_{1,n}^{[s]}(-1) \\
		&\qquad\qquad\qquad\qquad - \sum_{m \geq 0} \sum_{1 \leq s_1 < s_2 \leq l} \sum_{a=1}^{n} \Big( E_{a,n}^{[s_1]}(m) E_{1,a}^{[s_2]}(-m-1) - E_{1,a}^{[s_1]}(m) E_{a,n}^{[s_2]}(-m-1) \Big)\Bigg).
	\end{split}
\end{equation*}
This is equal to
\begin{equation*}
	\begin{split}
		&(-\hbar) \Bigg( - l \alpha \sum_{s=1}^{l} E_{1,n}^{[s]}(-1) + \alpha \sum_{s=1}^{l} (l-s) E_{1,n}^{[s]}(-1) + \sum_{m \in \bbZ} \sum_{1 \leq s_1 < s_2 \leq l} \sum_{a=1}^{n} E_{1,a}^{[s_1]}(m) E_{a,n}^{[s_2]}(-m-1) \\
		&\qquad\qquad - \sum_{m \geq 0} \sum_{a=1}^{n} \left( \sum_{s_1=1}^{l} E_{1,a}^{[s_1]}(-m-1) \right) \left( \sum_{s_2=1}^{l} E_{a,n}^{[s_2]}(m) \right) \Bigg).
	\end{split}
\end{equation*}
Then we obtain
\begin{equation}
	\begin{split}
		&\eta^{(l)} \circ \ev^{\otimes l} \circ \Delta^{l-1}(X_{0,1}^{-}) = \ev^{\otimes l} \circ \Delta^{l-1}(X_{0,1}^{-}) \\
		&\quad + (-\hbar) \Bigg( \sum_{1 \leq s_1 < s_2 \leq l} \Big( (l-s_2)\alpha E_{1,n}^{[s_1]}(-1) + (l-s_1)\alpha E_{1,n}^{[s_2]}(-1) \Big) - \sum_{s_2=1}^{l} (l-s_2)\alpha \sum_{s_1=1}^{l} E_{1,n}^{[s_1]}(-1) \Bigg)\\
		&= (-\hbar) \Bigg( - l \alpha \sum_{s=1}^{l} E_{1,n}^{[s]}(-1) + \alpha \sum_{s=1}^{l} (l-s) E_{1,n}^{[s]}(-1) + \sum_{m \in \bbZ} \sum_{a=1}^{n} \sum_{1 \leq s_1 < s_2 \leq l} E_{1,a}^{[s_1]}(m) E_{a,n}^{[s_2]}(-m-1) \\
		&\qquad\qquad\quad - \sum_{m \geq 0} \sum_{a=1}^{n} \left( \sum_{s_1=1}^{l} E_{1,a}^{[s_1]}(-m-1) \right) \left( \sum_{s_2=1}^{l} E_{a,n}^{[s_2]}(m) \right) - \alpha \sum_{s=1}^{l} (l-s) E_{1,n}^{[s]}(-1) \Bigg) \label{eq:Phi2}.
	\end{split}
\end{equation}
Since we have
\begin{equation*}
		W_{1,n}^{(2)}(-1) = \sum_{m \in \bbZ} \sum_{1 \leq s_1 < s_2 \leq l} \sum_{a=1}^{n} E_{1,a}^{[s_1]}(m) E_{a,n}^{[s_2]}(-m-1)
\end{equation*}
by Example~\ref{exam:generators}, we see that (\ref{eq:Phi2}) is equal to 
\[
	(-\hbar) \Bigg( W_{1,n}^{(2)}(-1) - l \alpha W_{1,n}^{(1)}(-1) - \sum_{m \geq 0} \sum_{a=1}^{n} W_{1,a}^{(1)}(-m-1) W_{a,n}^{(1)}(m) \Bigg).
\]
\end{proof}
\noindent \textbf{Continuation of the proof of Theorem~\ref{thm:main}}:
The map $\Phi_{l}$ coincides with $\Phi$ constructed by the second named author in \cite{Affine_super_Yangians_and_rectangular_W-superalgebras} up to the twist described in Remark~\ref{rem:difference} (i).
Therefore the surjectivity of $\Phi_{l}$ follows from \cite{Affine_super_Yangians_and_rectangular_W-superalgebras}.
We give some part of the discussion in Appendix~\ref{section:surjectivity} for completeness.
\end{proof}

\begin{cor}
Let $M= \bigoplus_{\gamma \in \bbC} M_{\gamma}$ be a graded $\mathfrak{U}(\rect)$-module and for each $\gamma$ we assume that $M_{\gamma + d} = 0$ holds for sufficiently large $d \in \bbZ$.
Then $M$ is regarded as a $\affY$-module via $\Phi_{l}$.

Moreover, each $M_{\gamma}$ is regarded as a module of the Yangian $Y(\mathfrak{sl}_{n})$ of finite type by restriction.
\end{cor}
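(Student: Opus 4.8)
The plan is to prove the two assertions separately; both amount to matching the $\bbZ$-grading of $\mathfrak{U}(\rect)$ with the grading of $M$. For the first assertion I would show that the boundedness hypothesis on $M$ is precisely what makes the \emph{a priori} infinite expressions $\Phi_{l}(X)$ act as honest operators on $M$. By Proposition~\ref{prop:degree_one_generator} it is enough to treat $X = X_{i,0}^{\pm}$ and $X = X_{i,1}^{+}$. For the degree-zero generators this is immediate from \eqref{eq:image_of_degree_zero}, which exhibits each $\Phi_{l}(X_{i,0}^{\pm})$ as a single generator $W^{(1)}_{i,j}(m)$ of $\mathfrak{U}(\rect)$, which acts on $M$ by hypothesis. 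For $X_{i,1}^{+}$ I would read off the explicit formulas of Proposition~\ref{prop:image}: each one equals $(-\hbar)$ times a finite linear combination of the $W^{(r)}_{i,j}(m)$ plus a sum $\sum_{m \geq 0}(\cdots)$ whose $m$-th summand has right-most factor a generator $W^{(1)}_{a,b}(p)$ with $p \in \{m,\, m+1\}$. Since $W^{(1)}_{a,b}(p)$ has $\bbZ$-degree $p$, it sends $M_{\gamma}$ into $M_{\gamma+p}$, which vanishes once $p$ is large; hence the sum truncates to a finite one on every homogeneous vector, so $\Phi_{l}(X_{i,1}^{+})$ is a well-defined operator on $M$. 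The defining relations of $\affY$ then hold automatically because $\Phi_{l}$ is an algebra homomorphism by Theorem~\ref{thm:main}. (Conceptually: the hypothesis puts $M$ in the category of smooth graded $\mathfrak{U}(\rect)$-modules, on which restriction of scalars along $\Phi_{l}\colon \compaffY \to \mathfrak{U}(\rect)$ is defined, yielding an $\compaffY$-module structure and hence, via $\affY \hookrightarrow \compaffY$, an $\affY$-module structure.)

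For the second assertion the plan is to realise $Y(\fraksl_{n})$ as a degree-zero subalgebra of $\affY$. Inspecting Definition~\ref{dfn:affine_Yangian}, the relations among the generators $X_{i,r}^{\pm}$, $H_{i,r}$ with $i \in \{1,\ldots,n-1\}$ involve only the principal $A_{n-1}$-submatrix of $(a_{ij})$ and none of the $\ve$-dependent relations attached to the node $0$; so these generators satisfy exactly Drinfeld's new-presentation relations for $Y(\fraksl_{n})$, and there is an algebra homomorphism $Y(\fraksl_{n}) \to \affY$ sending $i$-th generators to $i$-th generators. By the definition of the grading on $\affY$ only the $X_{0,r}^{\pm}$ have nonzero degree, so the image of this homomorphism lies in the degree-zero part of $\affY$. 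Since $\eta^{(l)}$, $\ev^{\otimes l}$ and $\Delta^{l-1}$ are all degree-preserving, $\Phi_{l}$ is graded, and therefore the composite $Y(\fraksl_{n}) \to \affY \hookrightarrow \compaffY \xrightarrow{\Phi_{l}} \mathfrak{U}(\rect)$ lands in the degree-zero part of $\mathfrak{U}(\rect)$, which preserves each component $M_{\gamma}$. Restricting to $M_{\gamma}$ then gives the desired $Y(\fraksl_{n})$-action, once one observes that the degree-zero part of $\mathfrak{U}(\rect)$ does act on the single component $M_{\gamma}$ --- which follows as in the first assertion, because the sums in $\Phi_{l}(X_{i,1}^{\pm})$ for $i \neq 0$ likewise truncate on $M_{\gamma}$.

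I do not expect a genuine obstacle; the whole thing is bookkeeping with the two gradings. The one point requiring care is to fix the sign convention so that the hypothesis ``$M_{\gamma+d} = 0$ for $d$ sufficiently large'' is exactly the finiteness condition under which the modes in Proposition~\ref{prop:image} act locally finitely, together with the remark that, even though the degree-zero part of $\mathfrak{U}(\rect)$ is itself a degree-wise completion, it still acts on a single $M_{\gamma}$ because $\mathfrak{U}(\rect)_{r}\cdot M_{\gamma} \subset M_{\gamma+r} = 0$ for $r$ large.
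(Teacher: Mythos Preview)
Your proposal is correct and follows essentially the same route as the paper: for the first assertion the paper simply says ``immediate'' (your detailed unpacking of why the infinite sums in $\Phi_{l}$ truncate on $M$ is exactly what underlies that word), and for the second the paper likewise identifies the subalgebra of $\affY$ generated by $X_{i,r}^{\pm}, H_{i,r}$ for $i=1,\dots,n-1$ with $Y(\fraksl_{n})$ sitting in degree zero, so that $\Phi_{l}$ carries it into the degree-zero part of $\mathfrak{U}(\rect)$, which preserves each $M_{\gamma}$. The only refinement in the paper is that it asserts this subalgebra is \emph{isomorphic} to $Y(\fraksl_{n})$, whereas you only construct a homomorphism $Y(\fraksl_{n}) \to \affY$; but for the purpose of endowing $M_{\gamma}$ with a $Y(\fraksl_{n})$-module structure your weaker statement already suffices.
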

\begin{proof}
The former assertion is immediate.
For the latter one, we see that the subalgebra of $\affY$ generated by $X_{i,r}^{\pm}$, $H_{i,r}$ ($i=1,\ldots,n-1$ and $r \in \bbZ_{\geq 0}$) is isomorphic to the Yangian $Y(\mathfrak{sl}_{n})$ of finite type and lies in the degree zero component.
Thus $\Phi_{l}(Y(\mathfrak{sl}_{n}))$ lies in the degree zero component of $\mathfrak{U}(\rect)$.
This proves the assertion.
\end{proof} 

\section{Coproduct and parabolic induction}\label{section:Coproduct_and_parabolic_induction}

Assume $l=l_1+l_2$.
Then $\mathcal{W}^{\kappa_i}(\gl_{N_i},f_i)$ for $i=1,2$ were defined in Section~\ref{section:Parabolic_induction}. 
Let
\[
	\Delta_{l_1,l_2} \colon \mathfrak{U}(\rect) \to \mathfrak{U}(\mathcal{W}^{\kappa_1}(\gl_{N_1},f_1)) \otimes_{\mathrm{comp}} \mathfrak{U}(\mathcal{W}^{\kappa_2}(\gl_{N_2},f_2))
\]
be the injective algebra homomorphism induced from the identity map on $U(\hat{\gl}_{n}^{\alpha})_{\mathrm{comp}}^{\otimes l}$.
Here $\otimes_{\mathrm{comp}}$ denotes the degree-wise completed tensor product defined in the same way as one in Section~\ref{section:Affine_Lie_algebras}. 
We give a compatibility of the coproduct $\Delta$ for $\affY$ and $\Delta_{l_1,l_2}$.
Recall the algebra automorphism $\eta_{\beta}$ ($\beta \in \bbC$) of $U(\hat{\gl}_{n}^{\alpha})$ defined in Section~\ref{section:Affine_Yangians_and_rectangular_W-algebras}.

\begin{lem}\label{lem:eta}
For any $\beta$, the algebra automorphism $\eta_{\beta}^{\otimes l}$ preserves $\mathfrak{U}(\rect)$ inside $U(\hat{\gl}_{n}^{\alpha})_{\mathrm{comp}}^{\otimes l}$.
\end{lem}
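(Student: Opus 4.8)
The plan is to reduce the statement to a check on generators, using the fact (Theorem~3.1 of \cite{MR3598875}, recalled in Section~\ref{section:Miura_map}) that $\rect$ is freely generated by the elements $W_{i,j}^{(r)}$, hence $\mathfrak{U}(\rect)$ is topologically generated by the $W_{i,j}^{(r)}(m)$. Since $\eta_\beta^{\otimes l}$ is already known to be an algebra automorphism of $U(\hat{\gl}_{n}^{\alpha})_{\mathrm{comp}}^{\otimes l}$, it suffices to show that $\eta_\beta^{\otimes l}\bigl(W_{i,j}^{(r)}(m)\bigr)$ again lies in $\mathfrak{U}(\rect)$ for every $r,i,j,m$; continuity then extends the conclusion to the whole completed subalgebra, and applying the same to $\eta_{-\beta}^{\otimes l}$ gives that $\eta_\beta^{\otimes l}$ restricts to an automorphism of $\mathfrak{U}(\rect)$, not merely an endomorphism.

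First I would compute the effect of $\eta_\beta^{\otimes l}$ on the defining Miura data. By definition $\eta_\beta^{\otimes l}$ sends $E_{i,j}^{[s]}(m)$ to $E_{i,j}^{[s]}(m)+\delta_{m,0}\delta_{i,j}\beta$ for every $s$; in vertex-algebra terms this is the automorphism of $V^{\tilde\kappa}(\frg_0)\cong V^{\tilde\kappa}(\gl_n)^{\otimes l}$ induced by $E_{i,j}^{[s]}[-1]\mapsto E_{i,j}^{[s]}[-1]+\delta_{i,j}\beta|0\rangle$, i.e.\ each matrix $A^{[s]}$ is replaced by $A^{[s]}+\beta I_n$ (viewing $\beta$ as $\beta$ times the vacuum). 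The cleanest route is to track this through the Miura transformation \eqref{eq:Miura_transformation}: the generating matrix becomes
\[
	(\tau + A^{[1]} + \beta I_n)(\tau + A^{[2]} + \beta I_n)\cdots(\tau + A^{[l]} + \beta I_n),
\]
and since $\tau+\beta$ commutes with $\tau$ and with the shift in the sense needed (the bracket relation $[X[-m],\tau]=\alpha m X[-m-1]$ is unaffected by adding a scalar to $X$ when $m=1$, and $\beta$ is central), one sees that $\eta_\beta^{\otimes l}$ is exactly the substitution $\tau\mapsto\tau+\beta$ in the defining product. Hence $\eta_\beta^{\otimes l}(W^{(r)})$ is a $\bbC[\beta]$-linear combination of the matrices $W^{(1)},\dots,W^{(l)}$ obtained by expanding $(\tau+\beta)^{l-r}=\sum_{t}\binom{l-r}{t}\beta^t\tau^{l-r-t}$ against $\tau^{l}+\sum_r\tau^{l-r}W^{(r)}$ and matching powers of $\tau$; concretely $\eta_\beta^{\otimes l}(W^{(r)})$ is a polynomial in $\beta$ whose coefficients are $W^{(r')}$ for $r'\le r$ (together with the identity, coming from the leading $\tau^l$). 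In particular each $\eta_\beta^{\otimes l}(W^{(r)}_{i,j})$ lies in $\rect$, and passing to Fourier coefficients, $\eta_\beta^{\otimes l}\bigl(W_{i,j}^{(r)}(m)\bigr)\in\mathfrak{U}(\rect)$. This is the heart of the matter.

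The remaining points are routine: one must check that the substitution $\tau\mapsto\tau+\beta$ really does implement $\eta_\beta^{\otimes l}$ on the normally-ordered products appearing in \eqref{eq:Miura_transformation} — the only subtlety is the convention that the right-most factor $X_a[-m_a]_{(-1)}$ is read as $X_a[-m_a]$, but since the shift $\delta_{m,0}\delta_{i,j}\beta$ affects only the mode-$0$ piece and $W^{(r)}$ is built from modes $[-m]$ with $m\ge 1$, no ambiguity arises on the right-hand side; the substitution is literally the effect on the left-hand side matrices $A^{[s]}$, and the equality of the two sides of \eqref{eq:Miura_transformation} is an identity of automorphisms. Finally, one notes that $\eta_\beta^{\otimes l}$ is continuous for the degree-wise topology (it respects the grading $\deg X(m)=m$ since the correction term has degree $0$), so it maps the completion $\mathfrak{U}(\rect)$ into itself; the same argument with $-\beta$ supplies the inverse, giving that $\eta_\beta^{\otimes l}|_{\mathfrak{U}(\rect)}$ is an automorphism. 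I expect the only mild obstacle to be bookkeeping in the binomial expansion and confirming the normal-ordering convention causes no trouble; there is no serious analytic or algebraic difficulty.
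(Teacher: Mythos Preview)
Your proposal has a genuine gap. The automorphism $\eta_\beta$ shifts the \emph{zero mode} $E_{i,j}(0)\mapsto E_{i,j}(0)+\delta_{i,j}\beta$, but you identify it with a vertex algebra automorphism sending the \emph{state} $E_{i,j}^{[s]}[-1]\mapsto E_{i,j}^{[s]}[-1]+\delta_{i,j}\beta\,|0\rangle$. These are different: a vertex algebra map of the latter kind would, upon passing to Fourier modes, shift $E_{i,j}(-1)$ rather than $E_{i,j}(0)$ (since $|0\rangle_{(m)}=\delta_{m,-1}$). In fact $\eta_\beta$ is not induced by any vertex algebra automorphism. Your handwave that ``$W^{(r)}$ is built from modes $[-m]$ with $m\ge 1$'' is true at the state level, but you must apply $\eta_\beta^{\otimes l}$ to the current-algebra elements $W^{(r)}_{i,j}(m)$, whose explicit mode expansions (Example~\ref{exam:generators}) do contain zero modes. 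So the ``$A^{[s]}\mapsto A^{[s]}+\beta$, hence $\tau\mapsto\tau+\beta$'' trick does not compute $\eta_\beta^{\otimes l}(W^{(r)}_{i,j}(m))$.

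You can see the failure concretely at $r=2$: your binomial expansion predicts the constant term $\binom{l}{2}\beta^2\,\delta_{m,0}\delta_{i,j}$, but a direct computation (as the paper carries out) gives $\dfrac{l(l-1)}{2}\beta(\beta-\alpha)\,\delta_{m,0}\delta_{i,j}$, with an extra $-\alpha\beta$ contribution coming from the term $-(m+1)\alpha\sum_s(l-s)E^{[s]}_{i,j}(m)$ in $W^{(2)}_{i,j}(m)$. So for $\alpha\neq 0$ your formula is simply wrong, and the argument does not establish that $\eta_\beta^{\otimes l}(W^{(r)}_{i,j}(m))\in\mathfrak{U}(\rect)$. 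The paper proceeds instead by direct computation at the mode level, splitting into cases: for $\alpha\neq 0$ one reduces to $r=1,2$ via Proposition~\ref{prop:generators} and computes those two cases by hand; for $\alpha=0$ the operator $\tau$ drops out, the mode formula for $W^{(r)}_{i,j}(m)$ simplifies, and the binomial expansion you wrote is then valid and matches the paper's formula.
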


\begin{proof}
First let us consider the case $\alpha \neq 0$.
Proposition~\ref{prop:generators} asserts that $\mathfrak{U}(\rect)$ is generated by $W_{i,j}^{(r)}(m)$ for $r=1,2$ and all $i,j,m$.
By a direct computation, we have
\begin{align*}
	\eta_{\beta}^{\otimes l}(W_{i,j}^{(1)}(m)) &= W_{i,j}^{(1)}(m) + \delta_{m,0}\delta_{i,j} \beta l, \\
	\eta_{\beta}^{\otimes l}(W_{i,j}^{(2)}(m)) &= W_{i,j}^{(2)}(m) + \beta (l-1) W_{i,j}^{(1)}(m) + \delta_{m,0}\delta_{i,j} \beta(\beta-\alpha) \dfrac{l(l-1)}{2},
\end{align*}
which proves the assertion.

Next consider the case $\alpha = 0$.
In this case, we have
\[
	W_{i,j}^{(r)}(m) = \sum_{\substack{m_1,\ldots,m_r \in \bbZ\\ m_1+\cdots+m_r=m}}\ \sum_{1 \leq s_1 < \cdots < s_r \leq l}\ \sum_{a_1,\ldots,a_{r-1}=1}^{n} E_{i,a_1}^{[s_1]}(m_1) E_{a_1,a_2}^{[s_2]}(m_2) \cdots E_{a_{r-1},j}^{[s_r]}(m_r)
\]
for $r=1,\ldots,l$, and $\mathfrak{U}(\rect)$ is generated by these elements.
By a direct computation, we have
\[
	\eta_{\beta}^{\otimes l}(W_{i,j}^{(r)}(m)) = \sum_{s=0}^{r} \beta^{s} 
	\begin{pmatrix}
		l-r+s\\
		s
	\end{pmatrix} W_{i,j}^{(r-s)}(m),
\]
where $W_{i,j}^{(0)}(m)$ is regarded as $\delta_{m,0}\delta_{i,j}$.
This proves the assertion.
\end{proof}

Let us define $\widetilde{\Delta}_{l_1,l_2}$, a slight modification of $\Delta_{l_1,l_2}$, by
\[
	\widetilde{\Delta}_{l_1,l_2} = (\eta_{-l_2 \alpha}^{\otimes l_1} \otimes \id^{\otimes l_2}) \circ \Delta_{l_1,l_2}. 
\]
This gives an injective algebra homomorphism from $\mathfrak{U}(\rect)$ to $\mathfrak{U}(\mathcal{W}^{\kappa_1}(\gl_{N_1},f_1)) \otimes_{\mathrm{comp}} \mathfrak{U}(\mathcal{W}^{\kappa_2}(\gl_{N_2},f_2))$ since we have $\eta_{-l_2 \alpha}^{\otimes l_1} \mathfrak{U}(\mathcal{W}^{\kappa_1}(\gl_{N_1},f_1)) = \mathfrak{U}(\mathcal{W}^{\kappa_1}(\gl_{N_1},f_1))$ by applying Lemma~\ref{lem:eta}.
\begin{cor}\label{cor:parabolic_induction}
We have
\[
	\widetilde{\Delta}_{l_1,l_2} \circ \Phi_{l} = \left( \Phi_{l_1} \otimes \Phi_{l_2} \right) \circ \Delta.
\]
\end{cor}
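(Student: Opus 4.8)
The plan is to verify the identity $\widetilde{\Delta}_{l_1,l_2} \circ \Phi_l = (\Phi_{l_1} \otimes \Phi_{l_2}) \circ \Delta$ on a set of algebra generators of $\affY_{\mathrm{comp}}$, for which Proposition~\ref{prop:degree_one_generator} tells us it suffices to check it on $X_{i,0}^{\pm}$ and $X_{i,1}^{+}$ for $i=0,1,\ldots,n-1$; by applying the antiautomorphism swapping $X^{+}\leftrightarrow X^{-}$ one may also reduce $X_{i,1}^{+}$ to $X_{i,1}^{-}$, or simply treat both. The degree-zero generators are immediate: by (\ref{eq:image_of_degree_zero}) and the formula $\square^{l-1}(X)=\sum_{s=1}^{l}X^{[s]}$, $\Phi_l(X_{i,0}^{\pm})$ is a sum over all $l$ tensor slots of the corresponding $E$-element, $\widetilde{\Delta}_{l_1,l_2}$ is the identity on $U(\hat{\gl}_n^{\alpha})_{\mathrm{comp}}^{\otimes l}$ (and $\eta_{-l_2\alpha}^{\otimes l_1}$ acts trivially on these off-diagonal elements when $i\ne0$, and is absorbed correctly when $i=0$), and $\Delta(X_{i,0}^{\pm})=\square(X_{i,0}^{\pm})$ simply splits the sum into the first $l_1$ and last $l_2$ slots, which is exactly $(\Phi_{l_1}\otimes\Phi_{l_2})$ applied to $\square(X_{i,0}^{\pm})$.

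For the degree-one generators, first I would observe the key structural fact that $\Phi_l = \eta^{(l)}\circ\ev^{\otimes l}\circ\Delta^{l-1}$ and that the iterated coproduct factors as $\Delta^{l-1} = (\Delta^{l_1-1}\otimes\Delta^{l_2-1})\circ\Delta$ by coassociativity, so that $\ev^{\otimes l}\circ\Delta^{l-1} = (\ev^{\otimes l_1}\circ\Delta^{l_1-1})\otimes(\ev^{\otimes l_2}\circ\Delta^{l_2-1})$ composed with $\Delta$. Thus $\ev^{\otimes l}\circ\Delta^{l-1}$ is already intertwined with $\Delta$ on the nose; the entire content of the corollary is therefore the discrepancy between the twists $\eta^{(l)}$ on the left and $(\eta_{-l_2\alpha}^{\otimes l_1}\otimes\id^{\otimes l_2})\circ(\eta^{(l_1)}\otimes\eta^{(l_2)})$ on the right. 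Writing out $\eta^{(l)} = \eta_{(l-1)\alpha}\otimes\cdots\otimes\eta_{\alpha}\otimes\id$ and comparing slot by slot, the right-hand composite puts $\eta_{(l_1-1)\alpha-l_2\alpha}=\eta_{(l-1)\alpha}$ in slot $1$, \ldots, $\eta_{-l_2\alpha}=\eta_{(l_1-1-l_1+1)\alpha-l_2\alpha}$ — more precisely slot $j\le l_1$ gets $\eta_{(l_1-j)\alpha}\circ\eta_{-l_2\alpha}=\eta_{(l_1-j-l_2)\alpha}=\eta_{(l-j)\alpha}$, and slot $l_1+j$ gets $\eta_{(l_2-j)\alpha}$, which does not obviously match $\eta^{(l)}$ in the last $l_2$ slots where $\eta^{(l)}$ has $\eta_{(l-l_1-j)\alpha}=\eta_{(l_2-j)\alpha}$ — good, these agree. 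So slotwise $\eta^{(l)}$ equals exactly $(\eta_{-l_2\alpha}^{\otimes l_1}\otimes\id^{\otimes l_2})\circ(\eta^{(l_1)}\otimes\eta^{(l_2)})$, and since the $\eta$'s are algebra automorphisms they commute past the already-intertwined part.

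Concretely, I would carry this out in the following order: (1) record $\Delta^{l-1}=(\Delta^{l_1-1}\otimes\Delta^{l_2-1})\circ\Delta$ from coassociativity; (2) note $\ev^{\otimes l}=\ev^{\otimes l_1}\otimes\ev^{\otimes l_2}$ trivially; (3) verify the slotwise identity $\eta^{(l)}=(\eta_{-l_2\alpha}^{\otimes l_1}\otimes\id^{\otimes l_2})\circ(\eta^{(l_1)}\otimes\eta^{(l_2)})$ of automorphisms of $U(\hat{\gl}_n^{\alpha})_{\mathrm{comp}}^{\otimes l}$, which is the elementary computation $(l_1-j)\alpha-l_2\alpha=(l-j)\alpha$ for $1\le j\le l_1$; (4) combine (1)--(3) to get $\widetilde{\Delta}_{l_1,l_2}\circ\Phi_l = (\eta_{-l_2\alpha}^{\otimes l_1}\otimes\id)\circ\Delta_{l_1,l_2}\circ\eta^{(l)}\circ\ev^{\otimes l}\circ\Delta^{l-1}$ and, using that $\Delta_{l_1,l_2}$ is induced by the identity of $U(\hat{\gl}_n^{\alpha})_{\mathrm{comp}}^{\otimes l}$ hence commutes with these slotwise operations, rearrange to $(\Phi_{l_1}\otimes\Phi_{l_2})\circ\Delta$; (5) for rigor, since all maps in sight are continuous algebra homomorphisms and both sides are therefore determined by their values on the generators of Proposition~\ref{prop:degree_one_generator}, it suffices that the abstract factorization in (4) holds, which it does by construction. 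The main obstacle is bookkeeping: one must be careful that $\Delta$ lands in the \emph{completed} tensor product and that $\Delta^{l_1-1}\otimes\Delta^{l_2-1}$, $\ev^{\otimes l_i}$, and the $\eta$-twists all extend continuously to these completions so that the formal manipulation of (4) is legitimate — but each such extension has already been set up in Sections~\ref{section:Affine_Yangians_and_rectangular_W-algebras} and \ref{section:Coproduct_and_parabolic_induction}, so no genuinely new analytic input is needed. Finally, that $\widetilde{\Delta}_{l_1,l_2}$ indeed maps into $\mathfrak{U}(\mathcal{W}^{\kappa_1}(\gl_{N_1},f_1))\otimes_{\mathrm{comp}}\mathfrak{U}(\mathcal{W}^{\kappa_2}(\gl_{N_2},f_2))$ was already arranged via Lemma~\ref{lem:eta}, so the displayed identity makes sense as stated.
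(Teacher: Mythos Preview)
Your approach is essentially the paper's own: unfold both sides, use coassociativity $\Delta^{l-1}=(\Delta^{l_1-1}\otimes\Delta^{l_2-1})\circ\Delta$, and reduce everything to a slotwise comparison of the $\eta$-twists. The paper's proof is just the two-line computation
\[
\widetilde{\Delta}_{l_1,l_2}\circ\Phi_l
=(\eta_{-l_2\alpha}^{\otimes l_1}\otimes\id^{\otimes l_2})\circ\eta^{(l)}\circ\ev^{\otimes l}\circ\Delta^{l-1}
=(\eta^{(l_1)}\otimes\eta^{(l_2)})\circ\ev^{\otimes l}\circ\Delta^{l-1}
=(\Phi_{l_1}\otimes\Phi_{l_2})\circ\Delta,
\]
so your initial discussion of checking on generators is unnecessary (though harmless).

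However, your step~(3) is stated and verified incorrectly. You claim
\[
\eta^{(l)}=(\eta_{-l_2\alpha}^{\otimes l_1}\otimes\id^{\otimes l_2})\circ(\eta^{(l_1)}\otimes\eta^{(l_2)})
\]
and support it with the arithmetic $(l_1-j)\alpha-l_2\alpha=(l-j)\alpha$, which is false: $l_1-j-l_2\ne l_1+l_2-j=l-j$. What you actually need, and what your step~(4) tacitly uses after unfolding $\widetilde{\Delta}_{l_1,l_2}\circ\Phi_l$, is the identity with the $\eta_{-l_2\alpha}$ factor on the \emph{other} side:
\[
(\eta_{-l_2\alpha}^{\otimes l_1}\otimes\id^{\otimes l_2})\circ\eta^{(l)}=\eta^{(l_1)}\otimes\eta^{(l_2)}.
\]
In slot $j\le l_1$ this reads $\eta_{-l_2\alpha}\circ\eta_{(l-j)\alpha}=\eta_{(l-j-l_2)\alpha}=\eta_{(l_1-j)\alpha}$, and in the last $l_2$ slots both sides are $\eta_{(l_2-j')\alpha}$, as you correctly noted. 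Once this sign/side error is fixed, your argument is complete and coincides with the paper's.
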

\begin{proof}
The assertion is proved by
\begin{equation*}
	\begin{split}
		\widetilde{\Delta}_{l_1,l_2} \circ \Phi_{l} &= (\eta_{-l_2 \alpha}^{\otimes l_1} \otimes \id^{\otimes l_2}) \circ \eta^{(l)} \circ \ev^{\otimes l} \circ \Delta^{l-1}\\
		&= (\eta^{(l_1)} \otimes \eta^{(l_2)}) \circ \ev^{\otimes l} \circ \Delta^{l-1} = \left( \Phi_{l_1} \otimes \Phi_{l_2} \right) \circ \Delta.
	\end{split}
\end{equation*}
\end{proof}

We can reformulate our main theorem and the above compatibility as follows.
Put
\[
	{}^\eta \mathfrak{U}(\rect) = ( \eta^{(l)} )^{-1} \mathfrak{U}(\rect), \ \ {}^\eta \mathfrak{U}(\mathcal{W}^{\kappa_i}(\gl_{N_i},f_i)) = ( \eta^{(l_i)} )^{-1} \mathfrak{U}(\mathcal{W}^{\kappa_i}(\gl_{N_i},f_i)).
\]
Since we have
\begin{equation*}
	\begin{split}
		( \eta^{(l)} )^{-1} \mathfrak{U}(\rect) &= (\eta^{(l_1)} \otimes \eta^{(l_2)})^{-1} \circ (\eta_{-l_2 \alpha}^{\otimes l_1} \otimes \id^{\otimes l_2})\, \mathfrak{U}(\rect) \\
		&\subset (\eta^{(l_1)} \otimes \eta^{(l_2)})^{-1} \circ (\eta_{-l_2 \alpha}^{\otimes l_1} \otimes \id^{\otimes l_2})\, \mathfrak{U}(\mathcal{W}^{\kappa_1}(\gl_{N_1},f_1)) \otimes_{\mathrm{comp}} \mathfrak{U}(\mathcal{W}^{\kappa_2}(\gl_{N_2},f_2))\\
		&= (\eta^{(l_1)} \otimes \eta^{(l_2)})^{-1} \mathfrak{U}(\mathcal{W}^{\kappa_1}(\gl_{N_1},f_1)) \otimes_{\mathrm{comp}} \mathfrak{U}(\mathcal{W}^{\kappa_2}(\gl_{N_2},f_2)),
	\end{split}
\end{equation*}
we see that ${}^\eta \mathfrak{U}(\rect)$ is contained in ${}^\eta \mathfrak{U}(\mathcal{W}^{\kappa_1}(\gl_{N_1},f_1)) \otimes_{\mathrm{comp}} {}^\eta \mathfrak{U}(\mathcal{W}^{\kappa_2}(\gl_{N_2},f_2))$.
We denote by ${}^\eta \Delta_{l_1,l_2}$ this inclusion.
Put
\[
	{}^{\eta} \Phi_{l} = ( \eta^{(l)} )^{-1} \circ \Phi_{l} = \ev^{\otimes l} \circ \Delta^{l-1}.
\]
Now the following is obvious.
\begin{cor}\label{cor:parabolic_induction2}
The map ${}^{\eta} \Phi_{l} = \ev^{\otimes l} \circ \Delta^{l-1}$ gives an algebra homomorphism
\[
	{}^{\eta} \Phi_{l} \colon \affY_{\mathrm{comp}} \to {}^\eta \mathfrak{U}(\rect)
\]
satisfying
\[
	{}^{\eta} \Delta_{l_1,l_2} \circ {}^{\eta} \Phi_{l} = \left( {}^{\eta} \Phi_{l_1} \otimes {}^{\eta} \Phi_{l_2} \right) \circ \Delta.
\]
Moreover, it is surjective if $\alpha \neq 0$.
\end{cor}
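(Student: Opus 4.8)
The plan is to obtain Corollary~\ref{cor:parabolic_induction2} as a purely formal consequence of Theorem~\ref{thm:main}, Corollary~\ref{cor:parabolic_induction}, and the bookkeeping identities for the shift automorphisms $\eta^{(l)}$ recorded in the paragraph immediately preceding the statement. First I would note that ${}^{\eta}\Phi_{l} = (\eta^{(l)})^{-1} \circ \Phi_{l}$ is the composition of the algebra homomorphism $\Phi_{l} \colon \affY_{\mathrm{comp}} \to \mathfrak{U}(\rect)$ of Theorem~\ref{thm:main} with the algebra automorphism $(\eta^{(l)})^{-1}$ of $U(\hat{\gl}_{n}^{\alpha})_{\mathrm{comp}}^{\otimes l}$, hence an algebra homomorphism, whose image is $(\eta^{(l)})^{-1}\mathfrak{U}(\rect) = {}^{\eta}\mathfrak{U}(\rect)$ by definition; the identity ${}^{\eta}\Phi_{l} = \ev^{\otimes l} \circ \Delta^{l-1}$ is then immediate from Definition~\ref{dfn:map}. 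Surjectivity onto ${}^{\eta}\mathfrak{U}(\rect)$ when $\alpha \neq 0$ follows at once, since $\Phi_{l}$ is surjective onto $\mathfrak{U}(\rect)$ in that case and $(\eta^{(l)})^{-1}$ restricts to a bijection $\mathfrak{U}(\rect) \to {}^{\eta}\mathfrak{U}(\rect)$.

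For the compatibility relation, I would conjugate the identity of Corollary~\ref{cor:parabolic_induction}, $\widetilde{\Delta}_{l_1,l_2} \circ \Phi_{l} = (\Phi_{l_1} \otimes \Phi_{l_2}) \circ \Delta$. Since $\Delta_{l_1,l_2}$, and hence $\widetilde{\Delta}_{l_1,l_2}$ and ${}^{\eta}\Delta_{l_1,l_2}$, are restrictions of explicit maps on the ambient algebra $U(\hat{\gl}_{n}^{\alpha})_{\mathrm{comp}}^{\otimes l}$, viewing everything there gives $(\Phi_{l_1} \otimes \Phi_{l_2}) \circ \Delta = (\eta_{-l_2\alpha}^{\otimes l_1} \otimes \id^{\otimes l_2}) \circ \Phi_{l}$. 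Then $({}^{\eta}\Phi_{l_1} \otimes {}^{\eta}\Phi_{l_2}) \circ \Delta = (\eta^{(l_1)} \otimes \eta^{(l_2)})^{-1} \circ (\eta_{-l_2\alpha}^{\otimes l_1} \otimes \id^{\otimes l_2}) \circ \Phi_{l} = (\eta^{(l)})^{-1} \circ \Phi_{l} = {}^{\eta}\Phi_{l} = {}^{\eta}\Delta_{l_1,l_2} \circ {}^{\eta}\Phi_{l}$, where the second equality is the factorization $(\eta^{(l)})^{-1} = (\eta^{(l_1)} \otimes \eta^{(l_2)})^{-1} \circ (\eta_{-l_2\alpha}^{\otimes l_1} \otimes \id^{\otimes l_2})$ — the identity already used before the statement to make sense of ${}^{\eta}\Delta_{l_1,l_2}$ (via Lemma~\ref{lem:eta}), and itself coming from $\eta_{\beta} \circ \eta_{\beta'} = \eta_{\beta+\beta'}$ together with $(l-j)\alpha = l_2\alpha + (l_1-j)\alpha$ — and the last equality holds because ${}^{\eta}\Delta_{l_1,l_2}$ is the inclusion induced by the identity map of $U(\hat{\gl}_{n}^{\alpha})_{\mathrm{comp}}^{\otimes l}$.

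I do not expect a genuine obstacle here: all the substance lies in Theorem~\ref{thm:main} and Corollary~\ref{cor:parabolic_induction}, and what remains is only the bookkeeping of the shift automorphisms. The one point warranting care is to track which tensor slot each $\eta_{\beta}$ acts in, and on which side it is composed, so that the ``identity inclusions'' $\Delta_{l_1,l_2}$ and ${}^{\eta}\Delta_{l_1,l_2}$ interact correctly with the conjugations; but this is precisely the content of the displayed computation of $(\eta^{(l)})^{-1}\mathfrak{U}(\rect)$ preceding the statement, so once that is in place the corollary is a one-line assembly — which is why the paper simply calls it obvious.
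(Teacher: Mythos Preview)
Your proposal is correct and follows exactly the approach the paper has in mind: it simply unwinds the definitions of ${}^{\eta}\Phi_{l}$ and ${}^{\eta}\Delta_{l_1,l_2}$ and conjugates Corollary~\ref{cor:parabolic_induction} by the shift automorphisms, using the factorization $(\eta^{(l)})^{-1} = (\eta^{(l_1)} \otimes \eta^{(l_2)})^{-1} \circ (\eta_{-l_2\alpha}^{\otimes l_1} \otimes \id^{\otimes l_2})$ already displayed before the statement. This is precisely why the paper declares the corollary obvious and gives no further proof.
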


\section{Commuting elements}\label{section:Commuting_elements}

By our main result, the elements $\Phi_{l}(H_{i,r})$ for $i= 0, 1, \ldots, n-1$ and $r \in \bbZ_{\geq 0}$ generate a commutative subalgebra of $\mathfrak{U}(\rect)$.
We expect that the subalgebra will play an important role for applications to the study of integrable systems.
Let us mention a few observation for commuting elements of low degree here.

Put
\begin{equation*}
	D_{i} = W_{i,i}^{(2)}(0) - \sum_{m \geq 0} \left( \sum_{a=1}^{i-1} W_{i,a}^{(1)}(-m) W_{a,i}^{(1)}(m) + \sum_{a=i}^{n} W_{i,a}^{(1)}(-m-1) W_{a,i}^{(1)}(m+1) \right)
\end{equation*}
for $i=1,\ldots,n$.
This element is an analog of $T_{i,i;i-1}^{(2)}$ in \cite[Example~9.1]{MR2199632}
(See also \cite[Section~3.3 (3.10) and Section~3.4]{MR2456464}).
Note that $D_{i}$ is also equal to
\begin{equation*}
	W_{i,i}^{(2)}(0) - \sum_{m \geq 0} \left( \sum_{a=1}^{i} W_{i,a}^{(1)}(-m) W_{a,i}^{(1)}(m) + \sum_{a=i+1}^{n} W_{i,a}^{(1)}(-m-1) W_{a,i}^{(1)}(m+1) \right) + W_{i,i}^{(1)}(0)^2.
\end{equation*}
We can write down $\Phi_{l}(H_{i,1})$ in terms of $D_{i}$'s as:
\begin{equation}
	\begin{split}
		&\Phi_{l}(H_{i,1}) = (-\hbar) \times \\
		&= \begin{cases}
			\Bigg( D_{n} - D_{1} - l\alpha \Big( W_{n,n}^{(1)}(0) - W_{1,1}^{(1)}(0) + l\alpha \Big) - W_{n,n}^{(1)}(0)^2 + W_{n,n}^{(1)}(0) \Big( W_{1,1}^{(1)}(0) - \alpha \Big) \Bigg) & (i = 0),\\
			\Bigg( D_{i} - D_{i+1} + \dfrac{i}{2} \Big( W_{i,i}^{(1)}(0) - W_{i+1,i+1}^{(1)}(0) \Big) - W_{i,i}^{(1)}(0)^2 + W_{i,i}^{(1)}(0) W_{i+1,i+1}^{(1)}(0) \Bigg) & (i \neq 0).
		\end{cases} \label{eq:image_of_H}
	\end{split}
\end{equation}
We note that (\ref{eq:image_of_H}) coincides with the formula for the evaluation map in Theorem~\ref{thm:evaluation} when $l=1$, if we put $W_{i,i}^{(2)}(0) = 0$.

By a lengthy computation, we see that $[D_{i}, D_{j}] = 0$ holds.

\begin{prop}\label{prop:commuting_elements}
We have $[D_{i}, D_{j}] = 0$.
\end{prop}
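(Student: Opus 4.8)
The plan is to deduce the proposition from the commutativity of the Cartan part of $\affY$, reducing it to a single, more structured assertion. First I would feed formula \eqref{eq:image_of_H} into the fact that $\Phi_{l}$ is an algebra homomorphism. For $i=1,\dots,n-1$ formula \eqref{eq:image_of_H} exhibits $\Phi_{l}(H_{i,1})$ as $-\hbar(D_i-D_{i+1})$ plus a correction $c_i$ that is a polynomial in $W_{1,1}^{(1)}(0),\dots,W_{n,n}^{(1)}(0)$, and it exhibits $\Phi_{l}(H_{0,1})$ as $-\hbar(D_n-D_1)+c_0$ with $c_0$ of the same type. Every $D_j$ is of weight $0$ for the Cartan of $\mathfrak{gl}_n$: each of its summands $W_{j,j}^{(2)}(0)$ and $W_{j,a}^{(1)}(-m)\,W_{a,j}^{(1)}(m)$ is annihilated by $\ad W_{p,p}^{(1)}(0)$. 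Hence every $D_j$ commutes with all the $W_{p,p}^{(1)}(0)$, so with all the $c_i$, and the $c_i$ commute with one another. Therefore applying $\Phi_{l}$ to $[H_{i,1},H_{j,1}]=0$ and cancelling the correction terms yields, since $\hbar\neq0$,
\begin{equation*}
	[D_i-D_{i+1},\,D_j-D_{j+1}]=0 \qquad (1\le i,j\le n),\qquad D_{n+1}:=D_1.
\end{equation*}

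Next I would convert these ``second--difference'' identities into the full statement. Put $b_{ij}=[D_i,D_j]$. The identities above say that the cyclic discrete mixed difference of $(b_{ij})$ vanishes; together with $b_{ii}=0$ this forces $b_{ij}=b_{i,k}+b_{k,j}=[D_i-D_j,\,D_k]$ for every $k$. Summing over $k=1,\dots,n$ gives $n\,[D_i,D_j]=[D_i-D_j,\,\textstyle\sum_k D_k]$. Thus the proposition is equivalent to the single assertion that $\sum_{k=1}^{n} D_k$ is central in $\mathfrak{U}(\rect)$, i.e.\ commutes with the topological generators $W_{p,q}^{(r)}(m)$ for $r=1,2$ (using Proposition~\ref{prop:generators} when $\alpha\neq0$, and the $W^{(r)}$ for all $r$ otherwise). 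This $\sum_k D_k$ is the affine counterpart of $-\hbar$ times the degree-two coefficient of the logarithm of the quantum Berezinian in the RTT description of finite Yangians, whose centrality underlies the commutativity of the Gelfand--Tsetlin subalgebra; so one expects it to be central here as well.

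It then remains to prove that $\sum_k D_k$ is central (or, what is presumably the route behind the phrase ``by a lengthy computation'', to verify $[D_i,D_j]=0$ head on). Either way one expands $D_i$ through the Miura map into its normal-ordered series in $\hat{\gl}_n^{\oplus l}$ and computes, using the commutation relations of the currents $E_{i,j}^{[s]}(m)$, equivalently the OPE formulas for the $W_{i,j}^{(r)}$ of \cite[Section~3.1]{MR4061286}. The main obstacle is managing the doubly infinite normal-ordered sums defining $D_i$ --- each $D_i$ lives only in the completion $U(\hat{\gl}_n^{\alpha})_{\mathrm{comp}}^{\otimes l}$ --- so that the reorderings and telescoping cancellations are legitimate; I would expect the most delicate cancellations to occur between the $W^{(2)}_{j,j}(0)$ term of $D_i$ and the quadratic $W^{(1)}$-tail of $D_j$, where the $\alpha$-dependent anomaly in the formula for $W^{(2)}_{i,j}(m)$ in Example~\ref{exam:generators} enters.
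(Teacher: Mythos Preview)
Your route is genuinely different from the paper's. The paper proves $[D_i,D_j]=0$ by a head-on OPE computation: Lemmas~\ref{lem:commutativity1} and \ref{lem:commutativity2} give $[W_{i,i}^{(2)}(0),W_{j,j}^{(2)}(0)]$ and $[W_{i,i}^{(2)}(0),A_j+B_j]$ explicitly, \eqref{eq:AB} gives $[A_i+B_i,A_j+B_j]$, and these three pieces combine (via Lemma~\ref{lem:OPE2-1}) to yield the vanishing. You instead use that $\Phi_l$ is an algebra homomorphism to obtain $[D_i-D_{i+1},D_j-D_{j+1}]=0$, and your cocycle-style reduction to $n\,[D_i,D_j]=[D_i-D_j,\sum_k D_k]$ is correct and pleasant.

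But there is a genuine gap: you never close the argument. First, the claim that the proposition is \emph{equivalent} to centrality of $\sum_k D_k$ is too strong---centrality is sufficient, while the proposition only yields $[\sum_k D_k,D_i]=0$ for each $i$, not commutation with all generators of $\mathfrak{U}(\rect)$. Second, and more seriously, neither the centrality nor the weaker statement $[\sum_k D_k,D_i]=0$ is ever proved: your final paragraph is a description of the work that remains, not a proof. Expanding $\sum_k D_k$ through the Miura map and computing its commutators with the $W$-currents is exactly the ``lengthy computation'' the paper carries out directly for $[D_i,D_j]$; absent an identification of $\sum_k D_k$ with a manifestly central element (the quantum-Berezinian heuristic is suggestive but not a proof in the affine setting), the reduction has not saved any labour. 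Note also that the paper deliberately proves Proposition~\ref{prop:commuting_elements} \emph{without} invoking $\Phi_l$ (see the paragraph following it), since in the alternative construction of \cite{Affine_super_Yangians_and_rectangular_W-superalgebras} this commutativity is an \emph{input} to verifying that $\Phi_l$ respects relations; your route, while not circular in the present paper, runs counter to that purpose.
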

\begin{proof}
A proof is given in Appendix~\ref{section:commutativity}.
\end{proof}

Moreover we easily see that $[D_{i}, W_{j,j}^{(1)}(0)] = 0$ holds.
Thus these computations imply that the right-hand sides of (\ref{eq:image_of_H}) for any $i,j$ mutually commute without using our main result.
This approach was used for a direct construction of $\Phi_{l}$ by the second named author in \cite{Affine_super_Yangians_and_rectangular_W-superalgebras}. 

Let us say a few words on Proposition~\ref{prop:commuting_elements}.  
If we put
\[
	A_{i} = \sum_{m \geq 0} \sum_{a=1}^{i-1} W_{i,a}^{(1)}(-m) W_{a,i}^{(1)}(m),\quad B_{i} = \sum_{m \geq 0} \sum_{a=i}^{n} W_{i,a}^{(1)}(-m-1) W_{a,i}^{(1)}(m+1),
\]
then computations for
\[
	[W_{i,i}^{(2)}(0), W_{j,j}^{(2)}(0)], \quad [W_{i,i}^{(2)}(0), A_{j}+B_{j}], \quad [A_{i}+B_{i},A_{j}+B_{j}]
\]
yield the vanishing of $[D_{i}, D_{j}]$.
We have
\begin{equation}
	[A_{i}+B_{i},A_{j}+B_{j}] = (l-1) \sum_{m \geq 1} m \Big( W_{i,i}^{(1)}(-m)W_{j,j}^{(1)}(m) - W_{j,j}^{(1)}(-m)W_{i,i}^{(1)}(m) \Big). \label{eq:AB}
\end{equation}
We write down $[W_{i,i}^{(2)}(0), W_{j,j}^{(2)}(0)]$ and $[W_{i,i}^{(2)}(0), A_{j}+B_{j}]$, and give a proof of Proposition~\ref{prop:commuting_elements} in Appendix~\ref{section:commutativity} for readers' convenience.

In the case $l=1$, the map $\Phi_{1}$ coincides with the evaluation map $\ev$,
and we regard $W_{i,j}^{(2)}(m)$ as $0$.
Then (\ref{eq:AB}) for $l=1$ implies that $[\ev(H_{i,1}), \ev(H_{j,1})]=0$ holds.
This is the main part of computations done by the first named author in \cite{MR4207398} to check that $\ev$ respects the defining relations of $\affY$. 
See Appendix~\ref{section:proof_of_evaluation_map} for a derivation of (\ref{eq:AB}).
As mentioned in Remark~\ref{rem:error}, the published version of \cite{MR4207398} has an error.
Although the correction to \cite{MR4207398} is available, we include some explanation in this paper since we use conventions slightly different from \cite{MR4207398}.

\section{Super case}\label{section:Super_case}

Let $m,n$ be nonzero positive integers with $m,n \geq 2$ and $m \neq n$.
Set $M=lm$ and $N=ln$.
For a fixed complex number $k$, we put $\alpha = k+(M-N)-(m-n) = k+(l-1)(m-n)$.

We set $\pari{i}$ to be $0$ if $1 \leq i \leq m$ and to be $1$ if $m+1 \leq i \leq m+n$.
Let $\mathfrak{gl}_{m|n}$ denote the general linear Lie superalgebra with the standard parity.
That is, the parity of the element $E_{i,j}$ ($1 \leq i,j \leq m+n$) is given by $\pari{i}+\pari{j}$.
We define an affinization $\hat{\mathfrak{gl}}_{m|n}^{\alpha}$ of $\mathfrak{gl}_{m|n}$ with generators $E_{i,j}(d)$ ($1 \leq i,j \leq m+n$ and $d \in \bbZ$) subject to the following relation:
\begin{multline*}
	[E_{i,j}(d), E_{p,q}(d')] \\
	= \delta_{pj}E_{i,q}(d+d')-\delta_{iq}(-1)^{(\pari{i}+\pari{j})(\pari{p}+\pari{q})}E_{p,j}(d+d') + d \delta_{d+d',0}\Big( \delta_{iq}\delta_{jp}(-1)^{\pari{i}}\alpha + \delta_{ij}\delta_{pq}(-1)^{\pari{i}+\pari{p}} \Big).
\end{multline*}
An algebra automorphism $\eta_{\beta}$ ($\beta \in \bbC$) of $U(\hat{\gl}_{m|n}^{\alpha})$ is defined by
\[
	E_{i,j}(d) \mapsto E_{i,j}(d) + \delta_{d,0}\delta_{i,j}(-1)^{\pari{i}}\beta.
\] 

We define a $\bbZ/2\bbZ$-grading of $\frak{g} = \mathfrak{gl}_{M|N} \cong \mathfrak{gl}_{l} \otimes \mathfrak{gl}_{m|n}$ by
\begin{equation}
	\frak{g}_{\pari{j}} = \mathfrak{gl}_{l} \otimes \left( \mathfrak{gl}_{m|n} \right)_{\pari{j}} \label{eq:parity} 
\end{equation}
for $j=0,1$.
Then the parity of $E_{s,t} \otimes E_{i,j}$ is $\pari{i}+\pari{j}$.
This is not the standard parity if $l \geq 2$.
One can define the rectangular $W$-superalgebra $\rect$ attached with
\begin{equation*}
	\frak{g} = \mathfrak{gl}_{M|N} \cong \mathfrak{gl}_{l} \otimes \mathfrak{gl}_{m|n}, \ f = \sum_{s=1}^{l-1} \sum_{i=1}^{m+n} E_{s(m+n)+i,(s-1)(m+n)+i} = \left( \sum_{s=1}^{l-1} E_{s+1,s} \right) \otimes I_{m+n},
\end{equation*}
and a supersymmetric bilinear form $\kappa$ on $\frak{g} = \mathfrak{sl}_{M|N} \oplus \frz_{M+N}$ defined by
\begin{equation*}
	\kappa(X,Y) = \begin{cases}
		k \str_{\frak{g}}(XY) & \text{if } X,Y \in \fraksl_{M \mid N},\\
		(k+M-N)(M-N) & \text{if } X=Y = I_{M+N},\\
		0 & \text{if } X \in \fraksl_{M \mid N} \text{ and } Y \in \frz_{M+N}.
	\end{cases}
\end{equation*}
Note that the supertrace is taken with respect to the parity given by (\ref{eq:parity}).
The explicit value of $\kappa$ is
\[
	\kappa(E_{s_1,t_1} \otimes E_{i,j}, E_{s_2,t_2} \otimes E_{p,q}) = \delta_{s_1,t_2}\delta_{t_1,s_2} \delta_{iq}\delta_{jp}(-1)^{\pari{i}}k + \delta_{s_1,t_1}\delta_{s_2,t_2} \delta_{ij}\delta_{pq}(-1)^{\pari{i}+\pari{p}}.
\]
See \cite{MR4088277} and \cite{Affine_super_Yangians_and_rectangular_W-superalgebras} for details.
Its current algebra $\mathfrak{U}(\rect)$ is identified with a subalgebra of $U(\hat{\mathfrak{gl}}_{m|n}^{\alpha})_{\mathrm{comp}}^{\otimes l}$ via the Miura map. 
The injectivity of the Miura map for $W$-superalgebras has been established by Genra \cite{MR3663604} for generic levels and by Nakatsuka \cite{2005.10472} for general levels.

By \cite{Affine_super_Yangians_and_rectangular_W-superalgebras}, one can define generators $W_{i,j}^{(r)}(d)$ ($1 \leq r \leq l, \ 1 \leq i,j \leq m+n, \ d \in \bbZ$) of $\mathfrak{U}(\rect)$ as follows.
Let $A^{[s]} = \left( (-1)^{\pari{i}}E_{i,j}^{[s]}[-1]_{(-1)} \right)_{1 \leq i,j \leq m+n}$ be the $(m+n) \times (m+n)$ matrix whose $(i,j)$-entry is $(-1)^{\pari{i}}E_{i,j}^{[s]}[-1]_{(-1)}$.
We define a matrix $W^{(r)}$ by
\begin{equation*}
	(\tau + A^{[1]}) (\tau + A^{[2]}) \cdots (\tau + A^{[l]}) = \tau^{l} + \sum_{r=1}^{l} \tau^{l-r} W^{(r)}.
\end{equation*}
Then generators $W_{i,j}^{(r)}$ of the rectangular $W$-superalgebra $\rect$ are defined by
\[
	W^{(r)} = \left( (-1)^{\pari{i}}W_{i,j}^{(r)} \right)_{1 \leq i,j \leq m+n}.
\]
We set
\[
	W_{i,j}^{(r)}(d) = W_{i,j}^{(r)} t^{d+r-1} \in \mathfrak{U}(\rect).
\]
Moreover, if $\alpha \neq 0$ then the elements of the generators for $r=1,2$ are enough to generate $\mathfrak{U}(\rect)$. 
\begin{exam}\label{exam:generators_super}
We have
\begin{align*}
	W_{i,j}^{(1)}(d) &= \sum_{s=1}^{l} E_{i,j}^{[s]}(d),\\
	W_{i,j}^{(2)}(d) &= \sum_{r \in \bbZ} \sum_{1 \leq s_1 < s_2 \leq l} \sum_{a=1}^{m+n} (-1)^{\pari{a}} E_{i,a}^{[s_1]}(d+r) E_{a,j}^{[s_2]}(-r) - (d+1)\alpha \sum_{s=1}^{l} (l-s) E_{i,j}^{[s]}(d).
\end{align*}
\end{exam}

We similarly introduce parabolic inductions $\Delta_{l_1,l_2}$ and $\tilde{\Delta}_{l_1,l_2}$ for a decomposition $l=l_1+l_2$.
They are injective homomorphisms from $\mathfrak{U}(\rect)$ to $\mathfrak{U}(\mathcal{W}^{\kappa_1}(\mathfrak{gl}_{M_1 \mid N_1}, f_1)) \otimes_{\mathrm{comp}} \mathfrak{U}(\mathcal{W}^{\kappa_2}(\mathfrak{gl}_{M_2 \mid N_2}, f_2))$, where $M_i = l_i m$, $N_i = l_i n$ ($i=1,2$).
An algebra automorphism $\eta^{(l)}$ of $U(\hat{\gl}_{m|n}^{\alpha})_{{\mathrm{comp}}}^{\otimes l}$ is defined by the same way as in the non-super case.

The affine super Yangian $Y(\hat{\frak{sl}}_{m|n})$ was defined by the second named author in \cite{Affine_super_Yangian}.
See also \cite[C.2]{2011.01203}.
It has two parameters $\hbar, \ve$ as the affine Yangian $Y(\hat{\frak{sl}}_{n})$.
Its generators are denoted by $X_{i,r}^{+}$, $X_{i,r}^{-}$, $H_{i,r}$ ($i = 0,1,\ldots,m+n-1$ and $r \in \mathbb{Z}_{\geq 0})$.  
Set
\begin{gather*}
	X_{0}^{+} = E_{m+n,1}(1), \quad X_{0}^{-} = -E_{1,m+n}(-1), \quad H_{0} = (-E_{m+n,m+n}) - E_{1,1} + c,\\
	X_{i}^{+} = E_{i,i+1}, \quad X_{i}^{-} = (-1)^{\pari{i}} E_{i+1,i}, \quad H_{i} = (-1)^{\pari{i}}E_{i,i}-(-1)^{\pari{i+1}}E_{i+1,i+1} \ \ (i \neq 0)
\end{gather*}
in the affine Lie superalgebra $\hat{\mathfrak{sl}}_{m|n}$ and use the same symbols for their images in $Y(\hat{\frak{sl}}_{m|n})$.

We prepare notations in order to simplify formulas below.
For $i=1,\ldots,m+n-1$ and $a=1,\ldots,m+n$, we set
\[
	\cpari{i}{a} = \pari{a} + (\pari{i}+\pari{a})(\pari{i+1}+\pari{a}) = \begin{cases}
		0 & \text{ if } 1 \leq i \leq m-1, \\
		\pari{a} & \text{ if } i=m, \\
		1 & \text{ if } m+1 \leq i \leq m+n-1.
	\end{cases}
\]
Fix a sequence $(c_{i})_{1 \leq i \leq m+n-1}$ by $c_{i}=-i/2$ for $1 \leq i \leq m$ and $c_{m+i}=(-m+i)/2$ for $1 \leq i \leq n-1$.
We give formulas for the coproduct $\Delta \colon Y(\hat{\mathfrak{sl}}_{m|n}) \to Y(\hat{\mathfrak{sl}}_{m|n})_{\mathrm{comp}}^{\otimes 2}$ and the evaluation map $\ev \colon Y(\hat{\mathfrak{sl}}_{m | n}) \to U(\hat{\gl}_{m|n}^{\alpha})_{\mathrm{comp}}$ from \cite{Affine_super_Yangian}.
The latter map exists under the assumption $\alpha + m - n = -\ve/\hbar$.
We omit to write down their explicit forms for $H_{i,1}$ here;
\begin{align*}
	&\Delta(X_{0,1}^{+}) = \square(X_{0,1}^{+}) \\
	&\ + \hbar \Bigg( X_{0}^{+} \otimes c + \sum_{d \geq 0} \sum_{a=1}^{m+n} (-1)^{\pari{a}} \Big( E_{a,1}(d+1) \otimes E_{m+n,a}(-d) - E_{m+n,a}(d+1) \otimes E_{a,1}(-d) \Big)\Bigg),\\
	&\Delta(X_{0,1}^{-}) = \square(X_{0,1}^{-}) \\
	&\ + \hbar \Bigg( c \otimes X_{0}^{-} - \sum_{d \geq 0} \sum_{a=1}^{m+n} (-1)^{\pari{a}} \Big( E_{a,m+n}(d) \otimes E_{1,a}(-d-1) - E_{1,a}(d) \otimes E_{a,m+n}(-d-1) \Big)\Bigg)
\end{align*}
and
\begin{align*}
	&\Delta(X_{i,1}^{+}) = \square(X_{i,1}^{+}) + \hbar \sum_{d \geq 0} \\
	&\quad\Bigg( \sum_{a=1}^{i} (-1)^{\cpari{i}{a}} E_{a,i+1}(d) \otimes E_{i,a}(-d) + \sum_{a=i+1}^{m+n} (-1)^{\cpari{i}{a}} E_{a,i+1}(d+1) \otimes E_{i,a}(-d-1) \\
	&\qquad\qquad\quad - \sum_{a=1}^{i} (-1)^{\pari{a}} E_{i,a}(d+1) \otimes E_{a,i+1}(-d-1) - \sum_{a=i+1}^{m+n}  (-1)^{\pari{a}} E_{i,a}(d) \otimes E_{a,i+1}(-d) \Bigg),\\
	&\Delta(X_{i,1}^{-}) = \square(X_{i,1}^{-}) + (-1)^{\pari{i}}\hbar \sum_{d \geq 0} \\
	&\quad\Bigg( \sum_{a=1}^{i} (-1)^{\cpari{i}{a}} E_{a,i}(d) \otimes E_{i+1,a}(-d) + \sum_{a=i+1}^{m+n} (-1)^{\cpari{i}{a}} E_{a,i}(d+1) \otimes E_{i+1,a}(-d-1) \\
	&\qquad\qquad\quad - \sum_{a=1}^{i} (-1)^{\pari{a}} E_{i+1,a}(d+1) \otimes E_{a,i}(-d-1) - \sum_{a=i+1}^{m+n} (-1)^{\pari{a}} E_{i+1,a}(d) \otimes E_{a,i}(-d) \Bigg)
\end{align*}
for $i=1,\ldots,m+n-1$;
\begin{align*}
		\ev(X_{0,1}^{+}) &= \hbar \Bigg( \alpha X_{0}^{+} + \sum_{d \geq 0} \sum_{a=1}^{m+n} (-1)^{\pari{a}} E_{m+n,a}(-d) E_{a,1}(d+1) \Bigg),\\
		\ev(X_{0,1}^{-}) &= \hbar \Bigg( \alpha X_{0}^{-} - \sum_{d \geq 0} \sum_{a=1}^{m+n} (-1)^{\pari{a}} E_{1,a}(-d-1) E_{a,m+n}(d) \Bigg)
\end{align*}
and
\begin{align*}
		&\ev(X_{i,1}^{+}) = \hbar \times\\
		&\ \Bigg( c_{i} X_{i}^{+} + \sum_{d \geq 0} \Bigg( \sum_{a=1}^{i} (-1)^{\pari{a}} E_{i,a}(-d) E_{a,i+1}(d) + \sum_{a=i+1}^{m+n} (-1)^{\pari{a}} E_{i,a}(-d-1) E_{a,i+1}(d+1) \Bigg) \Bigg),\\
		&\ev(X_{i,1}^{-}) = \hbar \times\\
		&\ \Bigg( c_{i} X_{i}^{-} + (-1)^{\pari{i}} \sum_{d \geq 0} \Bigg( \sum_{a=1}^{i} (-1)^{\pari{a}} E_{i+1,a}(-d) E_{a,i}(d) + \sum_{a=i+1}^{m+n} (-1)^{\pari{a}} E_{i+1,a}(-d-1) E_{a,i}(d+1) \Bigg) \Bigg)
\end{align*}
for $i=1,\ldots,m+n-1$.

\begin{dfn}\label{dfn:map_super}
Assume $k+M-N=\alpha+m-n=-\ve/\hbar$.
We set
\[
	\Phi_{l} = \eta^{(l)} \circ \ev^{\otimes l} \circ \Delta^{l-1},
\]
which is an algebra homomorphism from $Y(\hat{\frak{sl}}_{m|n})_{\mathrm{comp}}$ to $U(\hat{\gl}_{m|n}^{\alpha})_{{\mathrm{comp}}}^{\otimes l}$.
\end{dfn}

In the computation of $\Phi_{l}$, we use the formula
\[
	\square(X) \square(Y) = \square(XY) + X \otimes Y + (-1)^{|X||Y|} Y \otimes X,
\]
where $|X|$ and $|Y|$ denote their parities.
We omit details.
\begin{prop}\label{prop:image_super}
We have
\begin{equation*}
	\begin{split}
		&\Phi_{l}(X_{0,1}^{+}) = (-\hbar) \Bigg( W_{m+n,1}^{(2)}(1) - \alpha W_{m+n,1}^{(1)}(1) - \sum_{d \geq 0} \sum_{a=1}^{m+n} (-1)^{\pari{a}} W_{m+n,a}^{(1)}(-d) W_{a,1}^{(1)}(d+1)\Bigg),\\
		&(-1)^{\pari{m+n}} \Phi_{l}(X_{0,1}^{-}) \\
		&= (-\hbar) \Bigg( W_{1,m+n}^{(2)}(-1) - l \alpha W_{1,m+n}^{(1)}(-1) - \sum_{d \geq 0} \sum_{a=1}^{m+n} (-1)^{\pari{a}} W_{1,a}^{(1)}(-d-1) W_{a,m+n}^{(1)}(d) \Bigg)
	\end{split}
\end{equation*}
and
\begin{equation*}
	\begin{split}
		&\Phi_{l}(X_{i,1}^{+}) = (-\hbar) \Bigg( W_{i,i+1}^{(2)}(0) - c_{i} W_{i,i+1}^{(1)}(0) \\
		&\qquad\quad - \sum_{d \geq 0} \Bigg( \sum_{a=1}^{i} (-1)^{\pari{a}} W_{i,a}^{(1)}(-d) W_{a,i+1}^{(1)}(d) + \sum_{a=i+1}^{m+n} (-1)^{\pari{a}} W_{i,a}^{(1)}(-d-1) W_{a,i+1}^{(1)}(d+1) \Bigg)\Bigg),\\
		&(-1)^{\pari{i}} \Phi_{l}(X_{i,1}^{-}) = (-\hbar) \Bigg( W_{i+1,i}^{(2)}(0) - c_{i} W_{i+1,i}^{(1)}(0) \\
		&\qquad\quad - \sum_{d \geq 0} \Bigg( \sum_{a=1}^{i} (-1)^{\pari{a}} W_{i+1,a}^{(1)}(-d) W_{a,i}^{(1)}(d) + \sum_{a=i+1}^{m+n} (-1)^{\pari{a}} W_{i+1,a}^{(1)}(-d-1) W_{a,i}^{(1)}(d+1) \Bigg)\Bigg)
	\end{split}
\end{equation*}
for $i=1,\ldots,m+n-1$.
\end{prop}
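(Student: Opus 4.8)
The plan is to imitate verbatim the proof of Proposition~\ref{prop:image}, carrying the parity signs along. As there, it suffices to treat $i=0$ and $i\neq 0$ separately, and for each to compute $\eta^{(l)}\circ\ev^{\otimes l}\circ\Delta^{l-1}$ applied to $X_{i,1}^{\pm}$ and then recognize the answer inside $U(\hat{\gl}_{m|n}^{\alpha})_{\mathrm{comp}}^{\otimes l}$ using the explicit generators of Example~\ref{exam:generators_super}.

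First I would expand $\ev^{\otimes l}\circ\Delta^{l-1}(X_{0,1}^{+})$. Iterating the super coproduct and using $\square^{l-1}(X)=\sum_{s=1}^{l}X^{[s]}$ together with the identity $\square(X)\square(Y)=\square(XY)+X\otimes Y+(-1)^{|X||Y|}Y\otimes X$, the result splits as $\square^{l-1}(\ev(X_{0,1}^{+}))$ plus off-diagonal terms indexed by pairs $1\le s_{1}<s_{2}\le l$ coming from the quadratic part of the coproduct formula for $X_{0,1}^{+}$. Applying $\ev^{\otimes l}$ and $\ev(c)=\alpha$ turns everything into explicit elements. The next step is to reorganize the sums exactly as in the derivation of (\ref{eq:Phi}): since for $s_{1}\neq s_{2}$ the two tensor legs commute, the range $d\ge 0$ in the off-diagonal terms can be enlarged to $d\in\bbZ$, after which the double sum in $s_1,s_2$ over $1\le s_1<s_2\le l$ plus the diagonal $s_1=s_2$ contribution assembles into $\big(\sum_{s_1}E_{m+n,a}^{[s_1]}(-d)\big)\big(\sum_{s_2}E_{a,1}^{[s_2]}(d+1)\big)$ up to a correction, with the sign $(-1)^{\pari{a}}$ matching precisely the one in the formula for $W_{m+n,1}^{(2)}(1)$ in Example~\ref{exam:generators_super}. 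Then I apply $\eta^{(l)}=\eta_{(l-1)\alpha}\otimes\cdots\otimes\eta_{\alpha}\otimes\id$; since $\eta_{\beta}$ shifts $E_{i,i}(0)$ by $(-1)^{\pari{i}}\beta$ and $E_{m+n,1}$ is off-diagonal, the only effect is to produce terms linear in $E_{m+n,1}^{[s]}(1)$ with coefficients $(l-s)\alpha$, collected exactly as in (\ref{eq:Phi}). Comparison with Example~\ref{exam:generators_super} then gives the stated formula for $\Phi_{l}(X_{0,1}^{+})$, and the same computation with the coproduct and evaluation formulas for $X_{0,1}^{-}$, $X_{i,1}^{\pm}$ gives the remaining identities; the prefactors $(-1)^{\pari{m+n}}$ and $(-1)^{\pari{i}}$ on the left-hand sides enter because the root vectors $X_{0}^{-}$, $X_{i}^{-}$ and the generators $W_{i,j}^{(r)}$ themselves are defined with the parity twist $(-1)^{\pari{i}}$.

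The only real difficulty is sign bookkeeping: one must track $(-1)^{\pari{a}}$, $(-1)^{\cpari{i}{a}}$, and $(-1)^{|X||Y|}$ through every use of the product formula for $\square$ and through the super affine relation of $\hat{\gl}_{m|n}^{\alpha}$, especially when the internal index $a$ crosses the boundary between $1\le a\le m$ and $m+1\le a\le m+n$, and check that the signs surviving after all cancellations are exactly those in the parity-twisted definition $W^{(r)}=\big((-1)^{\pari{i}}W_{i,j}^{(r)}\big)$. No conceptual ingredient beyond the non-super case is required; the manipulation of the sums over $s_1,s_2$ and $d$ and the computation of the $\eta^{(l)}$-correction are formally identical to the proof of Proposition~\ref{prop:image}.
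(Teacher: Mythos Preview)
Your proposal is correct and matches the paper's approach exactly: the paper does not give a separate proof of Proposition~\ref{prop:image_super} but simply states the super product formula $\square(X)\square(Y)=\square(XY)+X\otimes Y+(-1)^{|X||Y|}Y\otimes X$ and writes ``We omit details,'' implicitly referring back to the computation in Proposition~\ref{prop:image}. Your outline---iterating the coproduct, applying $\ev^{\otimes l}$ and $\eta^{(l)}$, reorganizing the sums as in (\ref{eq:Phi}) and (\ref{eq:Phi2}), and matching with Example~\ref{exam:generators_super} while tracking the parity signs---is precisely what is intended.
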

Then we obtain a result in the super case by the same argument as in the non-super case.
\begin{thm}\label{thm:introduction_main_super}
Assume $k+M-N=\alpha+m-n=-\ve/\hbar$.
Then the map $\Phi_{l}$ gives an algebra homomorphism
\[
	\Phi_{l} \colon Y(\hat{\frak{sl}}_{m|n})_{\mathrm{comp}} \to \mathfrak{U}(\rect)
\]
which makes the following diagram commutative:
\[
	\xymatrix{
		Y(\hat{\frak{sl}}_{m|n}) \ar[d]_{\Delta} \ar[r]^{\Phi_{l}} \ & \ \mathfrak{U}(\rect) \ar[d]^{\tilde{\Delta}_{l_1,l_2}} \\
		Y(\hat{\frak{sl}}_{m|n})^{\otimes 2}_{\mathrm{comp}} \ar[r]^-{\Phi_{l_1} \otimes \Phi_{l_2}} \ \ & \ \ \mathfrak{U}(\mathcal{W}^{\kappa_1}(\mathfrak{gl}_{M_1 \mid N_1}, f_1)) \otimes_{\mathrm{comp}} \mathfrak{U}(\mathcal{W}^{\kappa_2}(\mathfrak{gl}_{M_2 \mid N_2}, f_2)).
	}	
\]
Moreover, $\Phi_{l}$ is surjective if $\alpha = k+(M-N)-(m-n) \neq 0$.
\end{thm}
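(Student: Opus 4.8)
The plan is to follow the non-super proof of Theorem~\ref{thm:main} and Corollary~\ref{cor:parabolic_induction} step by step, replacing every ingredient by its super counterpart; recall that the whole setup requires $m,n \geq 2$ and $m \neq n$, the super analog of the condition $n \geq 3$. By Definition~\ref{dfn:map_super}, $\Phi_{l} = \eta^{(l)} \circ \ev^{\otimes l} \circ \Delta^{l-1}$ is already an algebra homomorphism into $U(\hat{\gl}_{m|n}^{\alpha})_{\mathrm{comp}}^{\otimes l}$, using the coproduct and the evaluation map for $Y(\hat{\frak{sl}}_{m|n})$ from \cite{Affine_super_Yangian}. So the content is (a) that its image lies in $\mathfrak{U}(\rect)$, (b) surjectivity when $\alpha \neq 0$, and (c) the compatibility with $\Delta$ and $\tilde{\Delta}_{l_1,l_2}$. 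For (a), note that $Y(\hat{\frak{sl}}_{m|n})$ is generated by $X_{i,0}^{\pm}$ and $X_{i,1}^{+}$ for $i=0,1,\dots,m+n-1$, the super analog of Proposition~\ref{prop:degree_one_generator}, which is again an immediate consequence of the defining relations. Since $\Phi_{l}(X_{i,0}^{\pm})$ is a $W_{i,j}^{(1)}(d)$-type element (same computation as (\ref{eq:image_of_degree_zero})), it suffices to prove Proposition~\ref{prop:image_super}, which exhibits $\Phi_{l}(X_{i,1}^{\pm})$ as an explicit element of $\mathfrak{U}(\rect)$ written in $W_{i,j}^{(1)}$ and $W_{i,j}^{(2)}$.

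First I would prove Proposition~\ref{prop:image_super}, the super version of the proof of Proposition~\ref{prop:image}. One expands $\ev^{\otimes l} \circ \Delta^{l-1}(X_{i,1}^{\pm})$ using $\square^{l-1}(X) = \sum_{s=1}^{l} X^{[s]}$, the iterated multiplication rule $\square(X)\square(Y) = \square(XY) + X \otimes Y + (-1)^{|X||Y|} Y \otimes X$, and the explicit formulas for $\Delta$ and $\ev$ in Section~\ref{section:Super_case}; this yields a ``diagonal'' part $\sum_{s}\ev(\cdot)^{[s]}$ together with a ``cross'' part $\sum_{s_1 < s_2}(\cdots)$. One then reorganizes the cross sum over all $d \in \bbZ$, separates off the normal-ordering corrections, applies $\eta^{(l)} = \eta_{(l-1)\alpha} \otimes \cdots \otimes \eta_{\alpha} \otimes \id$ (whose effect on $E_{i,i}^{[s]}(0)$ carries the sign $(-1)^{\pari{i}}$), and checks that the resulting terms linear in $\alpha$ combine with the $\alpha \sum_s (l-s) E_{i,j}^{[s]}$ contributed by the $W^{(2)}$-formula of Example~\ref{exam:generators_super}, so that the total equals the stated expression. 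The one genuinely delicate point, and the step I expect to be the main obstacle, is the sign bookkeeping: one must track $(-1)^{\pari{a}}$, $(-1)^{\cpari{i}{a}}$ and the parities $|X|,|Y|$ of the root vectors through the whole composition and verify that the signs reassemble exactly into the generator $W_{i,j}^{(2)}(d)$ of $\mathfrak{U}(\rect)$ rather than a twisted variant. This is the analog of, but more involved than, the identities (\ref{eq:Phi}) and (\ref{eq:Phi2}).

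Given Proposition~\ref{prop:image_super}, surjectivity for $\alpha \neq 0$ follows as in the non-super case. The images of $X_{i,0}^{\pm}$ and $H_{i,0}$ give all $W_{i,j}^{(1)}(d)$, hence the affine $\mathfrak{gl}_{m|n}$-part lies in the image; feeding these back into the formulas of Proposition~\ref{prop:image_super} shows that each off-diagonal $W_{i,j}^{(2)}(d)$, and each diagonal $W^{(2)}$-combination entering $\Phi_{l}(H_{i,1})$, lies in the image, and taking brackets with the $W^{(1)}$'s produces all $W_{i,j}^{(2)}(d)$. Since $\alpha \neq 0$, the super analog of Proposition~\ref{prop:generators} (established in \cite{Affine_super_Yangians_and_rectangular_W-superalgebras}) says the $W_{i,j}^{(r)}(d)$ with $r=1,2$ topologically generate $\mathfrak{U}(\rect)$, so $\Phi_{l}$ is surjective; alternatively one invokes that $\Phi_{l}$ agrees up to a twist with the map constructed directly in \cite{Affine_super_Yangians_and_rectangular_W-superalgebras}.

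Finally, for the commutative diagram I would first establish the super analog of Lemma~\ref{lem:eta}: for any $\beta$, $\eta_{\beta}^{\otimes l}$ preserves $\mathfrak{U}(\rect)$, a direct computation from Example~\ref{exam:generators_super} giving $\eta_{\beta}^{\otimes l}(W_{i,j}^{(1)}(d)) = W_{i,j}^{(1)}(d) + \delta_{d,0}\delta_{i,j}(-1)^{\pari{i}}\beta l$ and $\eta_{\beta}^{\otimes l}(W_{i,j}^{(2)}(d)) = W_{i,j}^{(2)}(d) + \beta(l-1)W_{i,j}^{(1)}(d) + (\text{central})$, with the analogous statement for $\alpha=0$ via the full product formula for $W_{i,j}^{(r)}(d)$. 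Hence $\tilde{\Delta}_{l_1,l_2} = (\eta_{-l_2\alpha}^{\otimes l_1} \otimes \id^{\otimes l_2}) \circ \Delta_{l_1,l_2}$ is well-defined with image in $\mathfrak{U}(\mathcal{W}^{\kappa_1}(\mathfrak{gl}_{M_1\mid N_1},f_1)) \otimes_{\mathrm{comp}} \mathfrak{U}(\mathcal{W}^{\kappa_2}(\mathfrak{gl}_{M_2\mid N_2},f_2))$. The diagram then follows from coassociativity, $\Delta^{l-1} = (\Delta^{l_1-1} \otimes \Delta^{l_2-1}) \circ \Delta$, together with $(\eta_{-l_2\alpha}^{\otimes l_1} \otimes \id^{\otimes l_2}) \circ \eta^{(l)} = \eta^{(l_1)} \otimes \eta^{(l_2)}$, exactly as in the proof of Corollary~\ref{cor:parabolic_induction}:
\[
\tilde{\Delta}_{l_1,l_2} \circ \Phi_{l} = (\eta^{(l_1)} \otimes \eta^{(l_2)}) \circ \ev^{\otimes l} \circ (\Delta^{l_1-1} \otimes \Delta^{l_2-1}) \circ \Delta = (\Phi_{l_1} \otimes \Phi_{l_2}) \circ \Delta.
\]
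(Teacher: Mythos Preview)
Your proposal is correct and matches the paper's approach exactly: the paper's entire proof is the sentence ``we obtain a result in the super case by the same argument as in the non-super case,'' and you have spelled out precisely that argument with the appropriate super modifications (Proposition~\ref{prop:image_super} for the image, the super Lemma~\ref{lem:eta} and the $\eta$-identity for the diagram). One small slip worth flagging: the degree-zero generators $X_{i,0}^{\pm}, H_{i,0}$ alone do not produce \emph{all} $W_{i,j}^{(1)}(d)$---only the $\mathfrak{sl}$-part, missing the Heisenberg direction, which in the non-super case required the extra computations (\ref{eq:Heisenberg1}) and (\ref{eq:Heisenberg2})---but since you also offer the alternative of invoking \cite{Affine_super_Yangians_and_rectangular_W-superalgebras} for surjectivity, which is exactly what the paper does, the argument goes through.
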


\appendix

\section{}\label{section:surjectivity}

Lemma~\ref{lem:OPE2-1} and Proposition~\ref{prop:generators2} below are statements on the rectangular $W$-algebra $\rect$ for $n \geq 1$ and for $n \geq 2$, respectively.

\begin{lem}\label{lem:OPE2-1}
We have
\begin{equation*}
	\begin{split}
		[ W_{i,j}^{(2)}(m), W_{p,q}^{(1)}(m') ] &= \Big( \delta_{pj} W_{i,q}^{(2)}(m+m') - \delta_{iq} W_{p,j}^{(2)}(m+m') \Big) \\
		&\quad - m'(l-1) \Big( \delta_{iq} \alpha W_{p,j}^{(1)}(m+m') + \delta_{pq} W_{i,j}^{(1)}(m+m') \Big) \\
		&\qquad - \dfrac{m'(m'-1)}{2} \delta_{m+m',0} l(l-1)\alpha \Big( \delta_{iq}\delta_{jp}\alpha + \delta_{ij}\delta_{pq} \Big).
	\end{split}
\end{equation*}
In particular, we have
\[
	[W_{i,j}^{(2)}(m),W_{p,q}^{(1)}(0)] = \delta_{pj} W_{i,q}^{(2)}(m) - \delta_{iq} W_{p,j}^{(2)}(m),
\]
\[
	[W_{i,j}^{(2)}(m-1),W_{p,q}^{(1)}(1)] = \Big( \delta_{pj} W_{i,q}^{(2)}(m) - \delta_{iq} W_{p,j}^{(2)}(m) \Big) - (l-1) \Big( \delta_{iq} \alpha W_{p,j}^{(1)}(m) + \delta_{pq} W_{i,j}^{(1)}(m) \Big).
\]
\end{lem}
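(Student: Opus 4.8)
The plan is to work entirely inside the Miura image of $\mathfrak{U}(\rect)$ in $U(\hat{\gl}_{n}^{\alpha})_{\mathrm{comp}}^{\otimes l}$, using the explicit formulas of Example~\ref{exam:generators} together with the commutation relations of the $l$ copies of $\hat{\gl}_{n}$ at level $\alpha$, namely
\[
	[E_{i,j}^{[s]}(m), E_{p,q}^{[t]}(m')] = \delta_{st}\Big(\delta_{pj}E_{i,q}^{[s]}(m+m') - \delta_{iq}E_{p,j}^{[s]}(m+m') + m\delta_{m+m',0}(\delta_{iq}\delta_{jp}\alpha + \delta_{ij}\delta_{pq})\Big).
\]
First I would split $W_{i,j}^{(2)}(m) = Q_{i,j}(m) - (m+1)\alpha\sum_{s=1}^{l}(l-s)E_{i,j}^{[s]}(m)$ into its quadratic part $Q_{i,j}(m) = \sum_{r\in\bbZ}\sum_{s_1<s_2}\sum_{a}E_{i,a}^{[s_1]}(m+r)E_{a,j}^{[s_2]}(-r)$ and its linear part, and write $W_{p,q}^{(1)}(m') = \sum_{t=1}^{l}E_{p,q}^{[t]}(m')$, so the bracket in question breaks into $[Q_{i,j}(m), W_{p,q}^{(1)}(m')]$ and the "linear piece" $[-(m+1)\alpha\sum_{s}(l-s)E_{i,j}^{[s]}(m), W_{p,q}^{(1)}(m')]$, the latter being read off directly from the relation above. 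Since $W_{p,q}^{(1)}(m')$ is a finite sum of $E$'s, bracketing with it changes only finitely many slots per summand of $Q_{i,j}(m)$, so the computation may be carried out term by term in the completed algebra.

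For $[Q_{i,j}(m), W_{p,q}^{(1)}(m')]$ I would apply the Leibniz rule to each summand $E_{i,a}^{[s_1]}(m+r)E_{a,j}^{[s_2]}(-r)$; because $s_1\neq s_2$, only the tensor factor equal to $t$ contributes, so the bracket with $E_{p,q}^{[t]}(m')$ splits into a $t=s_1$ part and a $t=s_2$ part. The non-central terms of the relation then produce four quadratic families: after the reindexing $r\mapsto r\mp m'$, the family coming from the second non-central term of the $t=s_1$ part reassembles into $-\delta_{iq}Q_{p,j}(m+m')$, the family from the first non-central term of the $t=s_2$ part reassembles into $\delta_{pj}Q_{i,q}(m+m')$, and the remaining two families (the first non-central term for $t=s_1$ and the second for $t=s_2$) cancel each other identically after the same shift. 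The central terms carry a Kronecker delta — $\delta_{m+m'+r,0}$ for $t=s_1$ and $\delta_{m'-r,0}$ for $t=s_2$ — which localizes the sum over $r$ to a single value; what remains is a sum over $s_1<s_2$ of one $E_{\bullet}^{[s]}(m+m')$, evaluated using the elementary counts $\#\{s_1:s_1<s_2\}=s_2-1$ and $\#\{s_2:s_2>s_1\}=l-s_1$. This produces terms proportional to $\delta_{pj}\alpha E_{i,q}^{[s]}(m+m')$, $\delta_{iq}\alpha E_{p,j}^{[s]}(m+m')$ and $\delta_{pq}E_{i,j}^{[s]}(m+m')$.

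Finally I would add the linear piece and collect everything by the formal type of the term: matching the coefficient of $\delta_{pj}\alpha E_{i,q}^{[s]}(m+m')$ recovers the linear part of $\delta_{pj}W_{i,q}^{(2)}(m+m')$; matching $\delta_{iq}\alpha E_{p,j}^{[s]}(m+m')$ recovers the linear part of $-\delta_{iq}W_{p,j}^{(2)}(m+m')$ together with $-m'(l-1)\delta_{iq}\alpha W_{p,j}^{(1)}(m+m')$; matching $\delta_{pq}E_{i,j}^{[s]}(m+m')$ gives $-m'(l-1)\delta_{pq}W_{i,j}^{(1)}(m+m')$; and the only genuine scalar, which arises from the central term of the linear piece after using $\sum_{s=1}^{l}(l-s)=l(l-1)/2$ and $m(m+1)=m'(m'-1)$ on the support $m+m'=0$, gives the last line of the claimed identity. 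The two displayed specializations then follow by setting $m'=0$ and $(m,m')\mapsto(m-1,1)$. The main obstacle is purely bookkeeping: carrying out the reindexings so that the infinite sums match degree-by-degree in the completion, and keeping the combinatorial factors ($s_2-1$ versus $l-s_1$) and the sign and shift conventions consistent throughout.
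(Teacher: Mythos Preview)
Your approach is correct and the bookkeeping you outline goes through as stated; in particular the cancellation of the two ``mixed'' quadratic families after the shift $r\mapsto r\mp m'$ is exactly right, and the collection of the linear terms (matching $-m'(s-1)+(m+1)(l-s)$ against $(m+m'+1)(l-s)-m'(l-1)$ for the $\delta_{iq}\alpha E_{p,j}^{[s]}$ coefficient) recovers the claimed formula. The paper does not give a proof of this lemma: it is stated as a computational fact, with the remark (Remark~\ref{rem:difference}(ii)) that the relevant OPEs can be read off from \cite[Section~3.1]{MR4061286}, so your direct mode computation in the Miura image is essentially the Fourier-component version of that same OPE calculation and is the natural way to verify it from scratch.
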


The following assertion slightly refines Proposition~\ref{prop:generators}.
It can be regarded as an analog of Proposition~\ref{prop:degree_one_generator}.

\begin{prop}\label{prop:generators2}
Assume $l \geq 2,$ $n \geq 2$ and $\alpha \neq 0$.
Then $\mathfrak{U}(\rect)$ is topologically generated by
\[
	\left\{ W_{n,1}^{(1)}(1),\ W_{1,n}^{(1)}(-1),\ W_{i,i+1}^{(1)}(0),\ W_{i+1,i}^{(1)}(0) \mid i=1,\ldots,n-1 \right\} \cup \left\{ W_{i,i+1}^{(2)}(0) \mid i=1,\ldots,n-1 \right\}.
\]
\end{prop}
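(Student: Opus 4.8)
The plan is to combine the surjectivity of $\Phi_{l}$, which holds under the hypothesis $\alpha\neq 0$ by Theorem~\ref{thm:main}, with Proposition~\ref{prop:degree_one_generator}, which says that $\affY$ is generated by $X_{i,0}^{+},X_{i,0}^{-},X_{i,1}^{+}$ ($i=0,\ldots,n-1$). Write $\mathcal{A}\subset\mathfrak{U}(\rect)$ for the closed subalgebra topologically generated by the listed elements. Since $\Phi_{l}\colon\affY_{\mathrm{comp}}\to\mathfrak{U}(\rect)$ is a continuous (graded) surjection and $\affY$ is dense in $\affY_{\mathrm{comp}}$, we have $\mathfrak{U}(\rect)=\overline{\Phi_{l}(\affY)}$, so it suffices to show that $\Phi_{l}(X_{i,0}^{\pm})$ and $\Phi_{l}(X_{i,1}^{+})$ all lie in $\mathcal{A}$. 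By \eqref{eq:image_of_degree_zero} the elements $\Phi_{l}(X_{i,0}^{\pm})$ are literally in the list. By Proposition~\ref{prop:image}, $\Phi_{l}(X_{i,1}^{+})$ for $i\neq 0$ equals $-\hbar$ times $W_{i,i+1}^{(2)}(0)+\tfrac{i}{2}W_{i,i+1}^{(1)}(0)$ (both listed) minus a convergent quadratic expression in the currents $W_{p,q}^{(1)}(m)$, and $\Phi_{l}(X_{0,1}^{+})$ is a combination of $W_{n,1}^{(2)}(1)$, $W_{n,1}^{(1)}(1)$ and a quadratic expression in the $W_{p,q}^{(1)}(m)$. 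Hence the whole proof reduces to proving that $\mathcal{A}$ contains every $W_{p,q}^{(1)}(m)$ together with the single element $W_{n,1}^{(2)}(1)$.

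First I would note that $\mathcal{A}$ contains the affine Lie algebra: the currents $W_{i,j}^{(1)}(m)$ span a copy of the affine Lie algebra of $\mathfrak{gl}_{n}$ by the commutation relation recorded after Example~\ref{exam:generators}, and the listed elements $W_{i,i+1}^{(1)}(0),W_{i+1,i}^{(1)}(0)$ ($1\le i\le n-1$), $W_{n,1}^{(1)}(1)$, $W_{1,n}^{(1)}(-1)$ are exactly the Chevalley generators of the subalgebra $\widehat{\mathfrak{sl}}_{n}$ spanned by these currents. Thus $\mathcal{A}$ contains $W_{p,q}^{(1)}(m)$ for all $p\neq q$ and all $m$, and all Cartan differences $W_{p,p}^{(1)}(m)-W_{q,q}^{(1)}(m)$. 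Next, using $W_{i,i+1}^{(2)}(0)\in\mathcal{A}$ and the degree-zero case of Lemma~\ref{lem:OPE2-1} (the adjoint-type action $[W^{(2)}(0),W^{(1)}(0)]$), one gets $W_{p,q}^{(2)}(0)\in\mathcal{A}$ for all $p\neq q$ and all differences $W_{p,p}^{(2)}(0)-W_{q,q}^{(2)}(0)\in\mathcal{A}$; bracketing these against $W_{n,1}^{(1)}(1)$ and $W_{1,n}^{(1)}(-1)$ and applying Lemma~\ref{lem:OPE2-1} again (the $W^{(1)}$-correction terms that appear are off-diagonal, hence already in $\mathcal{A}$), one obtains the off-diagonal $W_{p,q}^{(2)}(\pm1)$ and the associated Cartan differences; in particular $W_{n,1}^{(2)}(1)\in\mathcal{A}$.

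The crux, and the only place where $l\ge 2$ and $\alpha\neq 0$ are used, is the extraction of the diagonal currents $W_{i,i}^{(1)}(m)$. Adding the two identities of Lemma~\ref{lem:OPE2-1} for $[W_{i,i+1}^{(2)}(0),W_{i+1,i}^{(1)}(m)]$ and $[W_{i+1,i}^{(2)}(0),W_{i,i+1}^{(1)}(m)]$, the $W^{(2)}$-contributions cancel and what remains is $-m(l-1)\alpha\bigl(W_{i,i}^{(1)}(m)+W_{i+1,i+1}^{(1)}(m)\bigr)$. The left-hand side lies in $\mathcal{A}$, so for $m\neq 0$ this yields $W_{i,i}^{(1)}(m)+W_{i+1,i+1}^{(1)}(m)\in\mathcal{A}$, and with the Cartan differences already in $\mathcal{A}$ we get all $W_{j,j}^{(1)}(m)\in\mathcal{A}$ for $m\neq 0$. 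For the remaining case $m=0$ I would bracket a degree-one Cartan difference $W_{i,i}^{(2)}(1)-W_{i+1,i+1}^{(2)}(1)$, produced in the previous step, against $W_{i,i}^{(1)}(-1)\in\mathcal{A}$: by Lemma~\ref{lem:OPE2-1} this commutator equals $(l-1)(\alpha+1)W_{i,i}^{(1)}(0)-(l-1)W_{i+1,i+1}^{(1)}(0)$ up to a scalar, and subtracting $(l-1)$ times the degree-zero Cartan difference leaves $(l-1)\alpha W_{i,i}^{(1)}(0)$, whence $W_{i,i}^{(1)}(0)\in\mathcal{A}$. Therefore $W_{p,q}^{(1)}(m)\in\mathcal{A}$ for all $p,q,m$; feeding this back into the reduction of the first paragraph, the quadratic parts of $\Phi_{l}(X_{i,1}^{+})$ lie in $\mathcal{A}$, so $\Phi_{l}(X_{i,1}^{+})\in\mathcal{A}$ for all $i$, and $\mathcal{A}=\mathfrak{U}(\rect)$.

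I expect the main obstacle to be precisely this diagonal, ``trace-direction'' extraction. The Chevalley-type $W^{(1)}$-generators alone produce only $\widehat{\mathfrak{sl}}_{n}$, and the trace parts of the affine $\mathfrak{gl}_{n}$ (for $W^{(1)}$, and implicitly for $W^{(2)}$ through the surjectivity argument) have to be recovered from the single family $W_{i,i+1}^{(2)}(0)$ via the $(l-1)\alpha$-type correction terms in Lemma~\ref{lem:OPE2-1}; this is exactly where $l\ge 2$ and $\alpha\neq 0$ are indispensable, and it is also the computationally heaviest step, requiring some care in the choice of indices and a little extra bookkeeping when $n$ is small.
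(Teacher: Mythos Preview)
Your argument has a genuine circularity problem. You invoke the surjectivity of $\Phi_{l}$ from Theorem~\ref{thm:main}, but in this paper the surjectivity is established precisely \emph{via} Proposition~\ref{prop:generators2}: look at the paragraph immediately following the proof of Proposition~\ref{prop:generators2} in Appendix~\ref{section:surjectivity}, where the authors show that the image of $\Phi_{l}$ contains the generating set supplied by Proposition~\ref{prop:generators2}, and conclude surjectivity. The paper does cite \cite{Affine_super_Yangians_and_rectangular_W-superalgebras} for surjectivity, but the argument there is exactly what Appendix~\ref{section:surjectivity} reproduces (the paper says so: ``The argument below is essentially the same as one in \cite{Affine_super_Yangians_and_rectangular_W-superalgebras}''). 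So appealing to surjectivity to prove Proposition~\ref{prop:generators2} begs the question.

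Your individual computations with Lemma~\ref{lem:OPE2-1} are correct, and in fact they largely reproduce the paper's direct proof: your extraction of off-diagonal $W_{p,q}^{(2)}(0)$, of the Cartan differences $W_{i,i}^{(2)}(m)-W_{j,j}^{(2)}(m)$, and of the diagonal $W_{i,i}^{(1)}(m)$ mirror the paper's bullet points almost verbatim. The one step you hope to skip by invoking surjectivity is the paper's last and hardest bullet, the extraction of $W_{i,i}^{(2)}(m)$ via a $[W^{(2)},W^{(2)}]$ commutator (which is where the assumption $\alpha\neq 0$ is really used). But that shortcut is illusory: the extraction of $W_{i,i}^{(2)}(m)$ is exactly what makes Proposition~\ref{prop:generators2} strong enough to imply surjectivity in the first place, so you cannot trade one for the other. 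The fix is to drop the appeal to $\Phi_{l}$ entirely and argue, as the paper does, that the listed elements generate all $W_{p,q}^{(r)}(m)$ for $r=1,2$ and then invoke Proposition~\ref{prop:generators}.
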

\begin{proof}
We abbreviate the word ``topologically''.
The argument below is essentially the same as one in \cite{Affine_super_Yangians_and_rectangular_W-superalgebras}.

\noindent $\bullet$ Generate $W_{i,j}^{(1)}(m)$ for $i \neq j$.

They are generated by $\left\{ W_{n,1}^{(1)}(1),\ W_{1,n}^{(1)}(-1),\ W_{i,i+1}^{(1)}(0),\ W_{i+1,i}^{(1)}(0) \mid i=1,\ldots,n-1 \right\}$.

\noindent $\bullet$ Generate $W_{i,i}^{(2)}(0) - W_{j,j}^{(2)}(0)$.

For $i=1,\ldots,n-1$, we have
\begin{equation*}
	\begin{split}
		[W_{i,i+1}^{(2)}(0), W_{i+1,i}^{(1)}(0)] = W_{i,i}^{(2)}(0) - W_{i+1,i+1}^{(2)}(0).
	\end{split}
\end{equation*}

\noindent $\bullet$ Generate $W_{i,j}^{(2)}(m)$ for $i \neq j$.

For $i \neq j$, we have
\begin{equation*}
	\begin{split}
		[W_{i,i}^{(2)}(0) - W_{j,j}^{(2)}(0), W_{i,j}^{(1)}(m)] = 2 W_{i,j}^{(2)}(m) + m(l-1)\alpha W_{i,j}^{(1)}(m).
	\end{split}
\end{equation*}

\noindent $\bullet$ Generate $W_{i,i}^{(2)}(m)-W_{j,j}^{(2)}(m)$ for $i \neq j$.

For $i \neq j$, we have
\begin{equation*}
	\begin{split}
		[W_{i,j}^{(2)}(m), W_{j,i}^{(1)}(0)] = W_{i,i}^{(2)}(m)-W_{j,j}^{(2)}(m).
	\end{split}
\end{equation*}

\noindent $\bullet$ Generate $W_{i,i}^{(1)}(m)$.

For $i \neq j$, we have
\begin{equation*}
	\begin{split}
		[W_{j,i}^{(2)}(m-1), W_{i,j}^{(1)}(1)] = W_{j,j}^{(2)}(m)-W_{i,i}^{(2)}(m) - (l-1)\alpha W_{i,i}^{(1)}(m).
	\end{split}
\end{equation*}

\noindent $\bullet$ Generate $W_{i,i}^{(2)}(m)$.

For $i \neq j$, we have
\[
	[W_{i,i}^{(2)}(m) - W_{j,j}^{(2)}(m), W_{i,i}^{(2)}(m') - W_{j,j}^{(2)}(m')] 
	= (m'-m)\alpha \Big( W_{i,i}^{(2)}(m+m') + W_{j,j}^{(2)}(m+m') \Big) + P,
\]
where $P$ is an element of $\mathfrak{U}(\rect)$ which is generated by $W_{a,b}^{(1)}(m'')$ and $W_{c,d}^{(2)}(m''')$ for various $a,b,c,d,m'',m'''$ with $c \neq d$.
Hence under the assumption $\alpha \neq 0$, we see that all the elements of the form $W_{i,i}^{(2)}(m) + W_{j,j}^{(2)}(m)$ for $i \neq j$ and $m \in \bbZ$ belong to the image of $\Phi_{l}$.
Thus $W_{i,i}^{(2)}(m)$ for any $i$ and $m$ belong to the image of $\Phi_{l}$ since so do $W_{i,i}^{(2)}(m) - W_{j,j}^{(2)}(m)$.
\end{proof}

Assume $\alpha \neq 0$.
Let us prove the surjectivity of $\Phi_{l}$, the latter statement of Theorem~\ref{thm:main}.
By (\ref{eq:image_of_degree_zero}), we see that the image of $\Phi_{l}$ contains $W_{i,j}^{(1)}(m)$ and $W_{i,i}^{(1)}(m)-W_{j,j}^{(1)}(m)$ for $i \neq j$ and $m \in \bbZ$.
The image of $\Phi_{l}$ contains $W_{i,i}^{(1)}(0)$ for any $i$ since we have
\begin{equation}
	\Phi_{l} \left( \sum_{i=0}^{n-1} H_{i,1} + \dfrac{\hbar}{2} \sum_{i=1}^{n-1} i H_{i,0} - \dfrac{\hbar}{2} \sum_{i=0}^{n-1} H_{i,0}^{2} \right) = (-\hbar) \left( -\alpha W_{n,n}^{(1)}(0) - \dfrac{(l\alpha)^2}{2} \right). \label{eq:Heisenberg1}
\end{equation}
We show that the image of $\Phi_{l}$ contains $W_{i,i}^{(1)}(m)$ for any $i$ and $m \neq 0$.
The formula (\ref{eq:image_of_H}) for $i = 0$ shows that the image of $\Phi_{l}$ contains
\begin{equation*}
	\begin{split}
		H' &= W_{n,n}^{(2)}(0) - W_{1,1}^{(2)}(0) + W_{n,n}^{(1)}(0) \Big( W_{1,1}^{(1)}(0) - \alpha \Big) \\
		&\quad - \sum_{m' \geq 0} \left( W_{n,n}^{(1)}(-m') W_{n,n}^{(1)}(m') - W_{1,1}^{(1)}(-m'-1) W_{1,1}^{(1)}(m'+1) \right).
	\end{split}
\end{equation*}
Hence the assertion follows from
\begin{equation}
	[H', W_{1,1}^{(1)}(m)-W_{2,2}^{(1)}(m)] = -m\alpha W_{1,1}^{(1)}(m). \label{eq:Heisenberg2}
\end{equation}
If $l=1$, this completes the proof. 
Suppose $l \geq 2$.
By Proposition~\ref{prop:image} together with the fact that the image of $\Phi_{l}$ contains $W_{i,j}^{(1)}(m)$ for any $i,j,m$, the image of $\Phi_{l}$ contains
\[
	\left\{ W_{n,1}^{(r)}(1),\ W_{1,n}^{(r)}(-1),\ W_{i,i+1}^{(r)}(0),\ W_{i+1,i}^{(r)}(0) \mid r=1,2 \text{ and } i=1,\ldots,n-1 \right\}.
\]
The proof is complete by Proposition~\ref{prop:generators2}.

\begin{rem}
A proof of the surjectivity of $\Phi_{1} = \ev$ was initially given by the first named author in \cite{MR3923494} by a different method.
The above argument, just computing (\ref{eq:Heisenberg2}), supplies a much simpler proof (the computation of (\ref{eq:Heisenberg1}) for $l=1$ was already appeared in \cite{MR3923494}). 
\end{rem}

\section{}\label{section:commutativity}

\begin{lem}\label{lem:commutativity1}
We have
\begin{equation*}
	\begin{split}
		[W_{i,i}^{(2)}(0),W_{j,j}^{(2)}(0)] &= - W_{i,i}^{(2)}(0) + W_{j,j}^{(2)}(0)\\
		&\qquad + \sum_{m \geq 0} \Big( W_{i,j}^{(2)} (-m) W_{j,i}^{(1)} (m) + W_{j,i}^{(1)} (-m-1) W_{i,j}^{(2)} (m+1) \Big) \\
		&\qquad \ - \sum_{m \geq 0} \Big( W_{j,i}^{(2)} (-m) W_{i,j}^{(1)} (m) + W_{i,j}^{(1)} (-m-1) W_{j,i}^{(2)} (m+1) \Big)\\
		&\qquad \ \ + (l-1)\alpha \sum_{m \geq 1} m \Big( W_{j,i}^{(1)} (-m) W_{i,j}^{(1)} (m) - W_{i,j}^{(1)} (-m) W_{j,i}^{(1)} (m) \Big)\\
		&\qquad \ \ \ + (l-1) \sum_{m \geq 1} m \Big( W_{i,i}^{(1)} (-m) W_{j,j}^{(1)} (m) - W_{j,j}^{(1)} (-m) W_{i,i}^{(1)} (m) \Big).
	\end{split}
\end{equation*}
\end{lem}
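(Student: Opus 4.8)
The plan is to compute the commutator directly in $U(\hat{\gl}_{n}^{\alpha})_{\mathrm{comp}}^{\otimes l}$, inside which $\mathfrak{U}(\rect)$ is identified with its image under the Miura map. Using Example~\ref{exam:generators} I would write
\[
	W_{i,i}^{(2)}(0) = \Omega_{i} + L_{i}, \qquad
	\Omega_{i} = \sum_{r \in \bbZ}\, \sum_{1 \le s_{1} < s_{2} \le l}\, \sum_{a=1}^{n} E_{i,a}^{[s_{1}]}(r)\, E_{a,i}^{[s_{2}]}(-r),
	\quad L_{i} = -\alpha \sum_{s=1}^{l} (l-s)\, E_{i,i}^{[s]}(0),
\]
noting that in each summand of $\Omega_{i}$ the two factors sit in distinct tensor slots $s_{1} \ne s_{2}$, so there is no normal-ordering ambiguity and the expression makes sense in the completion. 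Expanding
\[
	[W_{i,i}^{(2)}(0), W_{j,j}^{(2)}(0)] = [\Omega_{i}, \Omega_{j}] + [\Omega_{i}, L_{j}] + [L_{i}, \Omega_{j}] + [L_{i}, L_{j}],
\]
the last bracket vanishes because diagonal Cartan elements commute and the would-be central term carries total mode $0$.

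For the mixed brackets $[\Omega_{i}, L_{j}]$ and $[L_{i}, \Omega_{j}]$, a Leibniz expansion against $[-, E_{j,j}^{[t]}(0)]$ leaves only expressions quadratic in the $E^{[s]}(r)$'s; re-summing the slot indices via $W_{i,j}^{(1)}(m) = \sum_{s} E_{i,j}^{[s]}(m)$ collapses these into products $W^{(1)}W^{(1)}$, linear terms, and scalars, which feed into the $W^{(1)}$-bilinear tails and the $(l-1)\alpha$-corrections on the right-hand side. The bulk of the work is $[\Omega_{i}, \Omega_{j}]$. Here one writes out $[E_{i,a}^{[s_{1}]}(r) E_{a,i}^{[s_{2}]}(-r),\, E_{j,b}^{[t_{1}]}(r') E_{b,j}^{[t_{2}]}(-r')]$ and, using that the two factors of each product commute, observes that it is nonzero only when one of $s_{1}, s_{2}$ equals one of $t_{1}, t_{2}$. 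Under the standing constraints $s_{1} < s_{2}$ and $t_{1} < t_{2}$ this splits into the cases $s_{1} = t_{1}$, $s_{1} = t_{2}$, $s_{2} = t_{1}$, $s_{2} = t_{2}$ (the combination $s_{1} = t_{2}$ with $s_{2} = t_{1}$ is excluded, while the overlap $s_{1} = t_{1}$, $s_{2} = t_{2}$ is shared between two of the four cases and must be counted once). In each case I would substitute the defining relation of $\hat{\gl}_{n}$,
\[
	[E_{p,q}^{[s]}(r), E_{u,v}^{[s]}(r')] = \delta_{qu} E_{p,v}^{[s]}(r+r') - \delta_{vp} E_{u,q}^{[s]}(r+r') + r\,\delta_{r+r',0}\big( \delta_{pv}\delta_{qu}\alpha + \delta_{pq}\delta_{uv} \big),
\]
keep the surviving cubic monomial, and reindex the remaining ordered sum over slots so that it assembles into a product of a $W^{(2)}$-generator and a $W^{(1)}$-generator with the correct order of tensor factors; the central contributions, which force $r + r' = 0$, are collected separately into the $W^{(1)}$-bilinear tails.

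The step I expect to be the main obstacle is exactly this last reorganization. One must match, after reindexing the slot sums, ordered sums such as $\sum_{s_{1} < s_{2} = t_{1} < t_{2}}$ or $\sum_{t_{1} < s_{1} = t_{2} < s_{2}}$ against the ordered double sum defining $W^{(2)}_{i,j}(m)$, so that the correct tensor-factor ordering appears --- this is where the two equivalent presentations of $D_{i}$ recorded in the text are implicitly used --- and one must carefully track the degree shift $W_{i,j}^{(r)}(m) = W_{i,j}^{(r)}\, t^{m+r-1}$ when passing between the $E^{[s]}$-mode language and the $W^{(r)}$-mode language. The reindexing $r \mapsto r + r'$ inside the infinite mode sums, together with the fact that the central term of $\hat{\gl}_{n}$ fires only on $r + r' = 0$, is precisely what produces the tails $(l-1)\alpha \sum_{m \ge 1} m\big( W_{j,i}^{(1)}(-m) W_{i,j}^{(1)}(m) - W_{i,j}^{(1)}(-m) W_{j,i}^{(1)}(m) \big)$ and $(l-1) \sum_{m \ge 1} m\big( W_{i,i}^{(1)}(-m) W_{j,j}^{(1)}(m) - W_{j,j}^{(1)}(-m) W_{i,i}^{(1)}(m) \big)$; so the bookkeeping of these sums is where all the care is needed. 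As a consistency check, the resulting identity --- equivalently the OPE $W_{i,i}^{(2)}(z) W_{j,j}^{(2)}(w)$ it encodes --- can be compared with the OPE formulas of \cite[Section~3.1]{MR4061286} translated through Remark~\ref{rem:difference}.
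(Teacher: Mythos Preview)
The paper does not actually supply a proof of this lemma: it is stated as a computational fact in Appendix~\ref{section:commutativity}, with the only pointer being Remark~\ref{rem:difference}\,(ii), which says that the relevant OPEs can be read off from \cite[Section~3.1]{MR4061286} after the change of generators recorded there. Your plan --- expand $W_{i,i}^{(2)}(0)=\Omega_i+L_i$ via Example~\ref{exam:generators} and compute the commutator directly in $U(\hat{\gl}_{n}^{\alpha})_{\mathrm{comp}}^{\otimes l}$ --- is a correct way to obtain the same formula from scratch rather than by citation; it amounts to re-deriving the $W^{(2)}W^{(2)}$ OPE through the Miura map. So the approach is sound and, in substance, the same computation the cited OPE encodes.

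Two small points of precision. First, in the case analysis for $[\Omega_i,\Omega_j]$ the overlap $(s_1,s_2)=(t_1,t_2)$ is not a double-counting hazard to be ``counted once''; rather, two of the four Leibniz terms (those coming from $[E^{[s_1]},E^{[t_1]}]$ and $[E^{[s_2]},E^{[t_2]}]$) are simultaneously nonzero there, and both must be kept. Second, the reference to ``the two equivalent presentations of $D_i$'' is not really what is used here: those presentations live in Section~\ref{section:Commuting_elements} and concern $D_i$, whereas the reorganization you need is simply to recognize, after reindexing the ordered slot sums, the Miura expression of $W_{i,j}^{(2)}(m)$ from Example~\ref{exam:generators} multiplied by $W_{j,i}^{(1)}(m')=\sum_s E_{j,i}^{[s]}(m')$. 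With those adjustments, the outline you give --- including your identification of the central-extension contributions as the source of the $(l-1)\alpha$ and $(l-1)$ tails --- is accurate and will carry through.
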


\begin{lem}\label{lem:commutativity2}
For $i < j$, we have
\begin{equation*}
	\begin{split}
		&[W_{i,i}^{(2)}(0), A_{j}+B_{j}] = \sum_{m \geq 0} \Big( -W_{j,i}^{(2)}(-m) W_{i,j}^{(1)}(m) + W_{j,i}^{(1)}(-m) W_{i,j}^{(2)}(m) \Big)\\
		&\ + (l-1)\alpha \sum_{m \geq 1} m W_{j,i}^{(1)}(-m) W_{i,j}^{(1)}(m) + (l-1) \sum_{m \geq 1} m \Big( W_{i,i}^{(1)}(-m) W_{j,j}^{(1)}(m) - W_{j,j}^{(1)}(-m) W_{i,i}^{(1)}(m) \Big)
	\end{split}
\end{equation*}
and
\begin{equation*}
	\begin{split}
		&[W_{j,j}^{(2)}(0), A_{i}+B_{i}] = \sum_{m \geq 0} \Big( -W_{i,j}^{(2)}(-m-1) W_{j,i}^{(1)}(m+1) + W_{i,j}^{(1)}(-m-1) W_{j,i}^{(2)}(m+1) \Big)\\
		&\ + (l-1)\alpha \sum_{m \geq 1} m W_{i,j}^{(1)}(-m) W_{j,i}^{(1)}(m) + (l-1) \sum_{m \geq 1} m \Big( W_{j,j}^{(1)}(-m) W_{i,i}^{(1)}(m) - W_{i,i}^{(1)}(-m) W_{j,j}^{(1)}(m) \Big).
	\end{split}
\end{equation*}
\end{lem}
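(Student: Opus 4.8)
The plan is to compute both brackets directly. Since $[W_{i,i}^{(2)}(0),-]$ and $[W_{j,j}^{(2)}(0),-]$ are derivations, I would expand $A_j+B_j$ and $A_i+B_i$ into the defining quadratic sums of modes $W_{p,q}^{(1)}(\cdot)$ and substitute the closed formula of Lemma~\ref{lem:OPE2-1} for the elementary brackets $[W_{i,i}^{(2)}(0),W_{p,q}^{(1)}(m')]$ and $[W_{j,j}^{(2)}(0),W_{p,q}^{(1)}(m')]$. The first observation is that the scalar term in Lemma~\ref{lem:OPE2-1} always vanishes when the $W^{(2)}$ has mode $0$, because it then carries the factor $\tfrac{m'(m'-1)}{2}\delta_{m',0}$; so only the four terms $\delta_{pi}W_{i,q}^{(2)}$, $\delta_{iq}W_{p,i}^{(2)}$, $\delta_{iq}\alpha W_{p,i}^{(1)}$, $\delta_{pq}W_{i,i}^{(1)}$ (with $i$ the common index of the relevant $W^{(2)}$) can contribute.

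For the first identity, with $i<j$: in $A_j=\sum_{m\ge0}\sum_{a=1}^{j-1}W_{j,a}^{(1)}(-m)W_{a,j}^{(1)}(m)$ the left factors have row index $j\ne i$ and the right factors have column index $j\ne i$, so neither $\delta_{pi}$ nor $\delta_{iq}$ ever fires, and $\delta_{pq}=\delta_{aj}=0$ as $a\le j-1$; hence only the diagonal summand $a=i$ survives, and it yields precisely the $W_{j,i}^{(2)}(-m)W_{i,j}^{(1)}(m)$, $W_{j,i}^{(1)}(-m)W_{i,j}^{(2)}(m)$ and $m(l-1)\alpha\,W_{j,i}^{(1)}(-m)W_{i,j}^{(1)}(m)$ contributions (with the signs as displayed, and the last only for $m\ge1$). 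In $B_j=\sum_{m\ge0}\sum_{a=j}^{n}W_{j,a}^{(1)}(-m-1)W_{a,j}^{(1)}(m+1)$ we have $i<j\le a$, so only the $\delta_{pq}$ term can fire and only at $a=j$, producing $(m+1)(l-1)\bigl(W_{i,i}^{(1)}(-m-1)W_{j,j}^{(1)}(m+1)-W_{j,j}^{(1)}(-m-1)W_{i,i}^{(1)}(m+1)\bigr)$; re-indexing $m+1\mapsto m$ turns this into the $(l-1)\sum_{m\ge1}m(\cdots)$ of the statement. Adding the two gives the first formula.

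For the second identity, again $i<j$: $A_i$ involves only indices in $\{1,\dots,i-1\}$, so none of $\delta_{pj}$, $\delta_{jq}$, $\delta_{pq}$ can fire and $[W_{j,j}^{(2)}(0),A_i]=0$; thus the bracket equals $[W_{j,j}^{(2)}(0),B_i]$. In $B_i=\sum_{m\ge0}\sum_{a=i}^{n}W_{i,a}^{(1)}(-m-1)W_{a,i}^{(1)}(m+1)$ only $a=i$ (via $\delta_{pq}$, giving the $W_{j,j}^{(1)}$--$W_{i,i}^{(1)}$ cross-terms) and $a=j$ (via $\delta_{pj}$ and $\delta_{jq}$, giving the $W_{i,j}^{(2)}$, $W_{i,j}^{(1)}$, $W_{j,i}^{(2)}$ terms) contribute, and re-indexing $m+1\mapsto m$ of the relevant pieces yields the claimed expression.

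A few points on rigor rather than on strategy: one should first record that everything lives in the degree-wise completion $\mathfrak{U}(\rect)$ — each bracket is an infinite $m$-series of normally ordered quadratic terms sitting in a single graded piece — so that termwise application of the derivation rule is legitimate; this is immediate since each elementary bracket of Lemma~\ref{lem:OPE2-1} produces finitely many terms. One keeps a fixed normal ordering (negative modes on the left) and checks that no reordering of genuinely noncommuting factors is ever forced, so the output matches the displayed formulas with no extra correction terms. The only real work is the case analysis of which Kronecker deltas survive under $i<j$ combined with the index ranges defining $A$ and $B$; there is no conceptual obstacle, the identities being a short direct consequence of Lemma~\ref{lem:OPE2-1} and the affine $\mathfrak{gl}_n$ relations among the $W_{p,q}^{(1)}(m)$.
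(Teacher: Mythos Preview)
Your argument is correct and is exactly the route the paper has in mind: it records Lemma~\ref{lem:commutativity2} without proof, as a direct consequence of Lemma~\ref{lem:OPE2-1}, and your term-by-term case analysis of which Kronecker deltas survive under $i<j$ is precisely the intended computation. One expository nit: the sentence ``neither $\delta_{pi}$ nor $\delta_{iq}$ ever fires'' in your treatment of $A_j$ is slightly misleading, since for the left factor it is $\delta_{iq}=\delta_{ia}$ and for the right factor it is $\delta_{pi}=\delta_{ai}$ that \emph{do} fire at $a=i$ and produce the $W^{(2)}$ and $\alpha W^{(1)}$ terms; your subsequent claim that only $a=i$ survives and the listed output are nevertheless correct.
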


\begin{proof}[Proof of Proposition~\ref{prop:commuting_elements}]

We may assume $i < j$.
Then, by Lemma~\ref{lem:commutativity1}, \ref{lem:commutativity2}, and (\ref{eq:AB}), we have
\begin{equation*}
	\begin{split}
		[D_{i}, D_{j}] &= [W_{i,i}^{(2)}(0), W_{j,j}^{(2)}(0)] - [W_{i,i}^{(2)}(0), A_{j}+B_{j}] + [W_{j,j}^{(2)}(0), A_{i}+B_{i}] + [A_{i}+B_{i},A_{j}+B_{j}] \\
		&= - W_{i,i}^{(2)}(0) + W_{j,j}^{(2)}(0) + W_{i,j}^{(2)}(0) W_{j,i}^{(1)}(0) - W_{j,i}^{(1)}(0) W_{i,j}^{(2)}(0) \\
		&= - W_{i,i}^{(2)}(0) + W_{j,j}^{(2)}(0) + [W_{i,j}^{(2)}(0), W_{j,i}^{(1)}(0)].
	\end{split}
\end{equation*}
This is equal to $0$ by Lemma~\ref{lem:OPE2-1}.
\end{proof}

\section{}\label{section:proof_of_evaluation_map}

The equality (\ref{eq:AB}) is deduced from the following.
\begin{lem}
For $i < j$, we have
\begin{equation*}
	\begin{split}
		&[A_{i}, A_{j}] = \sum_{\substack{m,m' \geq 0\\ m-m'>0}} \sum_{a=1}^{i-1} \Big( W_{j,a}^{(1)}(-m') W_{i,j}^{(1)}(-m+m') W_{a,i}^{(1)}(m) - W_{i,a}^{(1)}(-m) W_{j,i}^{(1)}(m-m') W_{a,j}^{(1)}(m') \Big),
	\end{split}
\end{equation*}
\[
	[A_{i}, B_{j}]=0,
\]
\begin{equation*}
	\begin{split}
		&[B_{i}, A_{j}] = \sum_{m,m' \geq 0} \\
		&\Bigg( \sum_{a=1}^{i-1} \Big( -W_{j,a}^{(1)}(-m') W_{i,j}^{(1)}(-m-1) W_{a,i}^{(1)}(m+m'+1) + W_{i,a}^{(1)}(-m-m'-1) W_{j,i}^{(1)}(m+1) W_{a,j}^{(1)}(m') \Big)\\
		&\ + \sum_{a=j}^{n} \Big( -W_{j,a}^{(1)}(-m-m'-1) W_{i,j}^{(1)}(m') W_{a,i}^{(1)}(m+1) + W_{i,a}^{(1)}(-m-1) W_{j,i}^{(1)}(-m') W_{a,j}^{(1)}(m+m'+1) \Big) \Bigg)\\
		&\quad + \sum_{m \geq 1} m \Big( -W_{i,i}^{(1)}(-m) W_{j,j}^{(1)}(m) + W_{j,j}^{(1)}(-m) W_{i,i}^{(1)}(-m) \Big), 
	\end{split}
\end{equation*}
\begin{equation*}
	\begin{split}
		[B_{i}, B_{j}] &= \sum_{\substack{m,m' \geq 0\\ -m+m' \geq 0}} \sum_{a=j}^{n} \Big( W_{j,a}^{(1)}(-m'-1) W_{i,j}^{(1)}(-m+m') W_{a,i}^{(1)}(m+1) \\
		&\qquad\qquad\qquad\qquad - W_{i,a}^{(1)}(-m-1) W_{j,i}^{(1)}(m-m') W_{a,j}^{(1)}(m'+1) \Big)\\
		&\quad + l \sum_{m \geq 1} m \Big( W_{i,i}^{(1)}(-m) W_{j,j}^{(1)}(m) - W_{j,j}^{(1)}(-m) W_{i,i}^{(1)}(-m) \Big).
	\end{split}
\end{equation*}
\end{lem}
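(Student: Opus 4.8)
The computation takes place entirely inside the completed enveloping algebra $U(\hat{\gl}_{n}^{\alpha})_{\mathrm{comp}}$, where, as recorded in Section~\ref{section:Miura_map}, the elements $W_{i,j}^{(1)}(m)$ span an affine $\gl_{n}$ with the level-$l$ bracket
\[
	[W_{i,j}^{(1)}(m), W_{p,q}^{(1)}(m')] = \delta_{pj} W_{i,q}^{(1)}(m+m') - \delta_{iq} W_{p,j}^{(1)}(m+m') + m\delta_{m+m',0}\, l(\delta_{iq}\delta_{jp}\alpha + \delta_{ij}\delta_{pq}).
\]
The plan is to expand each of the four brackets by the Leibniz rule $[xy,zw] = x[y,z]w + xz[y,w] + [x,z]wy + z[x,w]y$ applied to the defining double sums of $A_{i}$ and $B_{i}$, substitute the formula above, and then simplify and reindex. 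Before doing so I would note that each graded piece of $A_{i}$ and $B_{i}$ is a finite sum, and that after expansion the resulting series of triple products and of quadratic scalar terms again have finitely many contributions in each degree modulo the defining ideals of the completion, so all four brackets are well-defined elements of $U(\hat{\gl}_{n}^{\alpha})_{\mathrm{comp}}$.

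The vanishing $[A_{i}, B_{j}] = 0$ is the one clean point. Every $W^{(1)}$ occurring in $A_{i}$ has its row and column index in $\{1, \dots, i\}$, while every $W^{(1)}$ occurring in $B_{j}$ has both indices in $\{j, j+1, \dots, n\}$; since $i < j$ these two index sets are disjoint. Hence in the Leibniz expansion every elementary commutator $[W_{p,q}^{(1)}(r), W_{s,t}^{(1)}(u)]$ that appears has $\{p,q\} \cap \{s,t\} = \emptyset$, so all of $\delta_{pj}$, $\delta_{iq}$, and both Kronecker products in the central term vanish, and the bracket is zero term by term.

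For $[A_{i}, A_{j}]$, $[B_{i}, A_{j}]$, and $[B_{i}, B_{j}]$ the index sets overlap (for instance $i$ can equal a column index of $A_{j}$ once $i \le j-1$), so there are genuine contributions. Here I would, for each bracket: (i) write out the Leibniz expansion; (ii) substitute the structure constants; (iii) use $i < j$ to discard every Kronecker delta that cannot be satisfied --- e.g.\ $\delta_{ij} = 0$, $\delta_{aj} = 0$ for $a \le i-1$, $\delta_{bi} = 0$ for $b \ge j$ --- keeping only the surviving matching deltas and the surviving central pieces; (iv) shift the two summation variables $m, m'$ and split their ranges where the factor $m\delta_{m+m',0}$ in the central term forces a cut, which is where the conditions $m - m' > 0$, resp.\ $-m + m' \ge 0$, in the stated formulas come from, and collect. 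The central terms of the affine bracket are what produce the scalar tails $\sum_{m \ge 1} m(\cdots)$; their coefficients differ ($l$ for $[B_{i}, B_{j}]$, since both factors of $B$ carry the level-$l$ term, against the $+1$ that survives in $[B_{i}, A_{j}]$ after the $A$-type contributions cancel part of it), and keeping track of these coefficients and of the boundary values $m=0$ is the delicate point.

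The main obstacle is purely organizational: there are on the order of a dozen delta cases per elementary commutator and four double sums to expand, and after the cancellations the surviving terms must be reindexed with care --- in particular a mis-sign or a dropped $m=0$ boundary term in the central contributions is easy to make, and is precisely the kind of slip behind the erratum to \cite{MR4207398} mentioned in Remark~\ref{rem:error}. To guard against this I would treat the generic interior terms first, handle the $m=0$ and ``diagonal'' boundary terms separately, and finally cross-check by adding the four resulting expressions: the triple-product terms should cancel completely and the scalar tails should combine to $(l-1)\sum_{m \ge 1} m\big(W_{i,i}^{(1)}(-m)W_{j,j}^{(1)}(m) - W_{j,j}^{(1)}(-m)W_{i,i}^{(1)}(m)\big)$, which is exactly (\ref{eq:AB}) and the reason the lemma is stated.
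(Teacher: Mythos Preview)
Your plan is exactly the paper's: expand each bracket by the Leibniz rule, substitute the affine $\gl_n$ commutation relations, discard the Kronecker deltas forced to zero by $i<j$, and reindex --- and your index-disjointness argument for $[A_i,B_j]=0$ is precisely the one-line ``due to $i<j$'' in Appendix~\ref{section:proof_of_evaluation_map}. One heuristic to correct before you execute: the coefficient-$1$ tail in $[B_i,A_j]$ does \emph{not} come from central terms (with $b\le j-1$ in $A_j$, every central Kronecker delta in the four elementary brackets vanishes), but from the non-central contribution $-\delta_{ja}W^{(1)}_{i,i}(\cdots)$ produced when you commute $W^{(1)}_{j,i}(-m')$ past $W^{(1)}_{i,a}(-m-1)$ while reordering the triple products; only in $[B_i,B_j]$ does a genuine central term survive (at $a=i$, $b=j$), and that is where the factor $l$ appears.
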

\begin{proof}
In the proof, we write $W_{i,j}^{(1)}(m)$ as $W_{i,j}(m)$.
We have
\begin{equation*}
	\begin{split}
		&[A_{i},A_{j}] = \sum_{m,m' \geq 0} \sum_{a=1}^{i-1} \sum_{b=1}^{j-1} [ W_{i,a}(-m) W_{a,i}(m), W_{j,b}(-m') W_{b,j}(m') ]\\
		&= \sum_{m,m' \geq 0} \sum_{a=1}^{i-1} \sum_{b=1}^{j-1} \\
		&\quad \Big( [W_{i,a}(-m), W_{j,b}(-m')] W_{a,i}(m) W_{b,j}(m') + W_{i,a}(-m) [W_{a,i}(m), W_{j,b}(-m')] W_{b,j}(m') \\
		&\ \quad + W_{j,b}(-m') [W_{i,a}(-m), W_{b,j}(m')] W_{a,i}(m) + W_{j,b}(-m') W_{i,a}(-m) [ W_{a,i}(m), W_{b,j}(m') ] \Big)\\
		&= \sum_{m,m' \geq 0} \sum_{a=1}^{i-1} \Big( -W_{j,a}(-m-m') W_{a,i}(m) W_{i,j}(m') - W_{i,a}(-m) W_{j,i}(m-m') W_{a,j}(m') \\
		&\qquad\qquad\qquad + W_{j,a}(-m') W_{i,j}(-m+m') W_{a,i}(m) + W_{j,i}(-m') W_{i,a}(-m) W_{a,j}(m+m') \Big).
	\end{split} 
\end{equation*}
Then the first assertion follows from
\begin{equation*}
	\begin{split}
		&\sum_{m,m' \geq 0} \sum_{a=1}^{i-1} \Big( -W_{j,a}(-m-m') W_{i,j}(m') W_{a,i}(m) + W_{j,a}(-m') W_{i,j}(-m+m') W_{a,i}(m) \\
		&\qquad\qquad\qquad -W_{j,a}(-m-m') [W_{a,i}(m), W_{i,j}(m')] \Big) \\
		&= \sum_{\substack{m,m' \geq 0\\ m-m' > 0}} \sum_{a=1}^{i-1} W_{j,a}(-m') W_{i,j}(-m+m') W_{a,i}(m) - \sum_{m,m' \geq 0} \sum_{a=1}^{i-1} W_{j,a}(-m-m') W_{a,j}(m+m')
	\end{split} 
\end{equation*}
and
\begin{equation*}
	\begin{split}
		&\sum_{m,m' \geq 0} \sum_{a=1}^{i-1} \Big( - W_{i,a}(-m) W_{j,i}(m-m') W_{a,j}(m') + W_{i,a}(-m) W_{j,i}(-m') W_{a,j}(m+m') \\
		&\qquad\qquad\qquad + [W_{j,i}(-m'), W_{i,a}(-m)] W_{a,j}(m+m') \Big) \\
		&= -\sum_{\substack{m,m' \geq 0\\ m-m' > 0}} \sum_{a=1}^{i-1} W_{i,a}(-m) W_{j,i}(m-m') W_{a,j}(m') + \sum_{m,m' \geq 0} \sum_{a=1}^{i-1} W_{j,a}(-m-m') W_{a,j}(m+m').
	\end{split} 
\end{equation*}

We have $[A_{i},B_{j}] = 0$ due to $i < j$.

We have
\begin{equation*}
	\begin{split}
		&[B_{i},A_{j}] = \sum_{m,m' \geq 0} \sum_{a=i}^{n} \sum_{b=1}^{j-1} [ W_{i,a}(-m-1) W_{a,i}(m+1), W_{j,b}(-m') W_{b,j}(m') ]\\
		&= \sum_{m,m' \geq 0} \\
		&\Bigg( \sum_{b=1}^{j-1} W_{i,b}(-m-m'-1) W_{j,i}(m+1) W_{b,j}(m') - \sum_{a=i}^{n} W_{j,a}(-m-m'-1) W_{a,i}(m+1) W_{i,j}(m') \\
		&\ + \sum_{a=i}^{j-1} \Big( - W_{i,a}(-m-1) W_{j,i}(m-m'+1) W_{a,j}(m') + W_{j,a}(-m') W_{i,j}(-m+m'-1) W_{a,i}(m+1) \Big) \\
		&\ \ + \sum_{a=i}^{n} W_{j,i}(-m') W_{i,a}(-m-1) W_{a,j}(m+m'+1) - \sum_{b=1}^{j-1} W_{j,b}(-m') W_{i,j}(-m-1) W_{b,i}(m+m'+1) \Bigg).
	\end{split} 
\end{equation*}
Then the third assertion follows from
\begin{equation*}
	\begin{split}
		&\sum_{m,m' \geq 0} \\
		&\Bigg( \sum_{b=1}^{j-1} W_{i,b}(-m-m'-1) W_{j,i}(m+1) W_{b,j}(m') - \sum_{a=i}^{j-1} W_{i,a}(-m-1) W_{j,i}(m-m'+1) W_{a,j}(m')\\
		&\ + \sum_{a=i}^{n} \Big( W_{i,a}(-m-1) W_{j,i}(-m') W_{a,j}(m+m'+1) + [W_{j,i}(-m'), W_{i,a}(-m-1)] W_{a,j}(m+m'+1) \Big) \Bigg)\\
		&= \sum_{m,m' \geq 0} \\
		&\Bigg( \sum_{a=i}^{j-1} \Big( W_{i,a}(-m-m'-1) W_{j,i}(m+1) W_{a,j}(m') - W_{i,a}(-m-1) W_{j,i}(m-m'+1) W_{a,j}(m')\\
		&\qquad\qquad + W_{i,a}(-m-1) W_{j,i}(-m') W_{a,j}(m+m'+1) \Big) \\
		&\ + \sum_{a=1}^{i-1} W_{i,a}(-m-m'-1) W_{j,i}(m+1) W_{a,j}(m') + \sum_{a=j}^{n} W_{i,a}(-m-1) W_{j,i}(-m') W_{a,j}(m+m'+1)\\
		&\ \qquad + \sum_{a=i}^{n} [W_{j,i}(-m'), W_{i,a}(-m-1)] W_{a,j}(m+m'+1) \Bigg)\\
		&= \sum_{m,m' \geq 0} \\
		&\Bigg( \sum_{a=1}^{i-1} W_{i,a}(-m-m'-1) W_{j,i}(m+1) W_{a,j}(m') + \sum_{a=j}^{n} W_{i,a}(-m-1) W_{j,i}(-m') W_{a,j}(m+m'+1)\\
		&\qquad + \sum_{a=i}^{n} W_{j,a}(-m-m'-1) W_{a,j}(m+m'+1) - W_{i,i}(-m-m'-1) W_{j,j}(m+m'+1) \Bigg)
	\end{split} 
\end{equation*}
and
\begin{equation*}
	\begin{split}
		&\sum_{m,m' \geq 0} \\
		&\Bigg( -\sum_{a=i}^{n} \Big( W_{j,a}(-m-m'-1) W_{i,j}(m') W_{a,i}(m+1) + W_{j,a}(-m-m'-1) [W_{a,i}(m+1), W_{i,j}(m')] \Big)\\
		&\ + \sum_{a=i}^{j-1} W_{j,a}(-m') W_{i,j}(-m+m'-1) W_{a,i}(m+1) - \sum_{b=1}^{j-1} W_{j,b}(-m') W_{i,j}(-m-1) W_{b,i}(m+m'+1) \Bigg)\\
		&= \sum_{m,m' \geq 0} \\
		&\Bigg( \sum_{a=i}^{j-1} \Big( -W_{j,a}(-m-m'-1) W_{i,j}(m') W_{a,i}(m+1) + W_{j,a}(-m') W_{i,j}(-m+m'-1) W_{a,i}(m+1) \\
		&\qquad\qquad - W_{j,a}(-m') W_{i,j}(-m-1) W_{a,i}(m+m'+1) \Big)\\
		&\ - \sum_{a=j}^{n} W_{j,a}(-m-m'-1) W_{i,j}(m') W_{a,i}(m+1) - \sum_{a=1}^{i-1} W_{j,a}(-m') W_{i,j}(-m-1) W_{a,i}(m+m'+1) \\
		&\ \qquad - \sum_{a=i}^{n} W_{j,a}(-m-m'-1) [W_{a,i}(m+1), W_{i,j}(m')] \Bigg) \\
		&= \sum_{m,m' \geq 0} \\
		&\Bigg( - \sum_{a=j}^{n} W_{j,a}(-m-m'-1) W_{i,j}(m') W_{a,i}(m+1) - \sum_{a=1}^{i-1} W_{j,a}(-m') W_{i,j}(-m-1) W_{a,i}(m+m'+1) \\
		&\qquad - \sum_{a=i}^{n} W_{j,a}(-m-m'-1) W_{a,j}(m+m'+1) + W_{j,j}(-m-m'-1) W_{i,i}(m+m'+1) \Bigg).
	\end{split} 
\end{equation*}

The last assertion is computed as
\begin{equation*}
	\begin{split}
		&[B_{i},B_{j}] = 
		\sum_{m,m' \geq 0} \sum_{b=j}^{n} \\
		&\quad \Big( W_{i,b}(-m-m'-2) W_{j,i}(m+1) W_{b,j}(m'+1) - W_{i,b}(-m-1) W_{j,i}(m-m') W_{b,j}(m'+1) \\
		&\ + W_{j,b}(-m'-1) W_{i,j}(-m+m') W_{b,i}(m+1) - W_{j,b}(-m'-1) W_{i,j}(-m-1) W_{b,i}(m+m'+2) \Big)\\
		&\qquad + l \sum_{m \geq 0} (m+1) \Big( W_{i,i}(-m-1) W_{j,j}(m+1) - W_{j,j}(-m-1) W_{i,i}(m+1) \Big)\\
		&= \sum_{\substack{m,m' \geq 0\\ m-m' \leq 0}} \sum_{b=j}^{n} \\
		&\quad \Big( - W_{i,b}(-m-1) W_{j,i}(m-m') W_{b,j}(m'+1) + W_{j,b}(-m'-1) W_{i,j}(-m+m') W_{b,i}(m+1) \Big)\\
		&\qquad + l \sum_{m \geq 1} m \Big( W_{i,i}(-m) W_{j,j}(m) - W_{j,j}(-m) W_{i,i}(m) \Big).
	\end{split} 
\end{equation*}
\end{proof}

\def\cprime{$'$} \def\cprime{$'$} \def\cprime{$'$} \def\cprime{$'$}
\providecommand{\bysame}{\leavevmode\hbox to3em{\hrulefill}\thinspace}
\providecommand{\MR}{\relax\ifhmode\unskip\space\fi MR }
\providecommand{\MRhref}[2]{%
  \href{http://www.ams.org/mathscinet-getitem?mr=#1}{#2}
}
\providecommand{\href}[2]{#2}

\end{document}